\newcommand\N{{\mathbb N}}
\newcommand\Q{{\mathbb Q}}
\newtheorem{theorem}{Theorem}[section]
\newtheorem{corollary}[theorem]{Corollary}
\newtheorem{lemma}[theorem]{Lemma}
\newtheorem{proposition}[theorem]{Proposition}
\newtheorem{remark}[theorem]{Remark}
\newtheorem{conjecture}[theorem]{Conjecture}
\newcommand{\sym}{e_1\otimes e_2 - e_2\otimes e_1}
\newcommand{\lwall}{e_2-e_1}
\newcommand{\rwall}{e_1-e_2}
\title{Skein Theoretic Approach to Yang-Baxter Homology}
\author{Mohamed Elhamdadi} 
\address{Department of Mathematics, 
	University of South Florida, Tampa, FL 33620, U.S.A.} 
\email{emohamed@math.usf.edu} 
\author{Masahico Saito} 
\address{Department of Mathematics, 
	University of South Florida, Tampa, FL 33620, U.S.A.} 
\email{saito@usf.edu} 
\author{Emanuele Zappala} 
\address{Department of Mathematics, 
	University of South Florida, Tampa, FL 33620, U.S.A.} 
\email{zae@mail.usf.edu}
\begin{document}

%
%
%
%
%
%
%

\maketitle

\begin{abstract}


We introduce skein theoretic techniques to compute the Yang-Baxter (YB) homology and cohomology groups of the R-matrix corresponding to the Jones polynomial. More specifically, we 
show that the YB operator $R$ for Jones, normalized for homology, 
admits a skein decomposition $R = I + \beta\alpha$, where $\alpha: V^{\otimes 2} \rightarrow k$ is a ``cup'' pairing map and $\beta: k \rightarrow V^{\otimes 2}$ is a ``cap'' copairing map, 
and  differentials in the chain complex associated to $R$ can be decomposed into 
horizontal tensor concatenations of cups and caps.
We apply our skein theoretic approach to determine 
the second and third YB homology groups, confirming a conjecture of Przytycki and Wang. Further, we 
compute the cohomology groups of $R$, and provide computations in higher dimensions that yield some annihilations of submodules.
\end{abstract}

\section{Introduction}
Yang-Baxter (YB) operators, i.e. solutions of the Yang-Baxter equation (YBE), have been first introduced and studied in Statistical Mechanics \cite{Jim}, due to their connection to scattering and integrable systems. They
have also played a central role in low-dimensional topology, where they are used to construct link and 3-manifold quantum invariants \cite{Tur,Tur2}, via representations of quantum groups and certain kinds of ribbon categories. Also, the study of set-theoretic YB operators has lead to introducing cocycle invariants of links from algebraic structures such as {\it quandles} \cite{CJKLS}.

Subsequently, homology theories for the Yang-Baxter equation have been developed and studied
in relation to deformation theories \cite{Eis},  and with applications to knot invariants
generalizing the notion of quandle cocycle invariants~\cite{CES}.  In particular, in \cite{CES} a (co)homology theory for set-theoretic Yang-Baxter equation was developed. More specifically, a Yang-Baxter set is a pair $(X,R)$, where $X$ is a set and $R:X \times X \rightarrow X \times X$ is an invertible map satisfying the equation
$$(R \times 1)(1 \times R)(R \times 1)=(1 \times R)(R \times 1)(1 \times R),$$ 
with $1:X \rightarrow X$ denoting the identity map. 
 Given a Yang-Baxter set $(X,R)$, in \cite{CES} a (co)chain complex associated to $R$ has been introduced, whose $2$-cocycles were used to produce invariants of classical and virtual knots. 
In \cite{LeVe,Prz}, this homology theory was generalized to the YB operators ($R$-matrix) on tensor products of vector spaces (or modules):
$R: V \otimes V \rightarrow  V \otimes V$ satisfying $$(R \otimes \mathbbm{1})(\mathbbm{1} \otimes R) (R \otimes \mathbbm{1})=(\mathbbm{1} \otimes R) (R \otimes \mathbbm{1})(\mathbbm{1} \otimes R),$$  
where $\mathbbm{1}: V \rightarrow V$ is the identity map.
They also provided an alternative diagrammatic of the chain maps that unifies the set-theoretic and tensor YBEs. 
In \cite{PW} 
it was shown that for set-theoretical case,  the two homology theories are equivalent.  

Also in  \cite{PW}, a family of YB operators corresponding to the Jones and HOMFLYPT polynomials was considered. The original matrices were normalized in order to define chain complexes. Computer based results and a conjecture related to the R-matrix corresponding to Jones polynomial were presented. In \cite{PW2}, the second homology group
for the matrices corresponding to the HOMFLYPT polynomial has been computed. 

The main purpose of this paper is to develop techniques to compute (co)homology groups of the YB operator corresponding to Jones polynomial. We do so by simplifying the differentials $d_n$ defining its YB homology. More specifically (Theorem~\ref{thm:diff}) we decompose $d_n$ in terms of sums of simpler maps $g_k, g'_k, h_k$ and $h'_k$  (see Figure~\ref{leftgens} for a diagrammatic interpretation), by using the skein relation satisfied by the normalized matrix $R$. As an application we explicitly give the corresponding decompositions of the differentials $d_n$ for $n = 2,3,4$ and compute the corresponding matrices and their Smith normal form, using preliminary results on  $g_k, g'_k, h_k$ and $h'_k$ . 

This article is organized as follows. In Section~\ref{sec:pre}, we recall the definitions of Yang-Baxter differentials and related homology, normalized Kauffman bracket R-matrix and a conjecture of Przytycki and Wang.  In Section~\ref{sec:KauffR} we show that $R$ satisfies the skein relation $R = \mathbbm{1} + \alpha\beta$, where $\alpha$ is a pairing  diagrammatically represented by a cup, and $\beta$ is a copairing represented by a cap.
We set up a diagrammatic formalism that will be used in the rest of the paper to simplify proofs and computations.
In particular, we apply it to show that the normalized matrix $R$ satisfies the YBE.
Section~\ref{sec:skeindifferential} is the central part of the article. Here we show that the differentials corresponding to $R$, defining YB homology, can be decomposed as tensor products of certain generating maps, that are represented by horizontal concatenations of corresponding diagrams.
We therefore proceed, in Section~\ref{sec:lowdim}, to apply the skein theoretic decomposition of the differentials to compute the homology of $R$ in low dimensions, confirming the case $n = 3$ of Przytycki-Wang conjecture. In Section~\ref{sec:YBcohomology} we dualize our methodology to compute low dimensional cohomology groups of $R$. Finally, in Section~\ref{sec:higherdim} we study the torsion of the homology groups in higher dimensions.  Precisely, 
for $X=(V, R)$ where $R$ is the normalized matrix in \cite{PW} on a rank 2 module $V$,  
we show that for every odd $n$, there exists a rank 2 submodule of $H_n(X)$ that is annihilated by 
multiplication by $y^4-1$.  For every even $n$, we show that there exists a rank one submodule $K_1$ of $Z_n(X)$ that is in the boundary group $B_n(X)$,
and a rank one submodule $K_2$ that is annihilated by 
multiplication by $y^2-1$.  
Some of the proofs are deferred to the appendices.

\section{Preliminary}\label{sec:pre}

\subsection{Yang-Baxter operators and their normalization}

Let $V$ be a $k$-module over a unital ring $k$. 
We say that $M$ is a right (resp. left)  $V$-module 
if there is a $k$-morphism ({\it action}) $\mu_\ell: M \otimes V\rightarrow M$ (resp. $\mu_r: V \otimes M \rightarrow M$), this unusual choice of conventions comes from the fact that the right action appears in the left differential of YB homology, while the left action appears in the right differential.  
Below when we focus on the right action,  we drop the subscript and use $\mu=\mu_{\ell}$.
In this paper we exclusively consider the ground ring $k=\Q[y, y^{-1}]$ and $M=k$ 
with {\it trivial} actions $\mu_\ell (a \otimes x)=a= \mu_r (x \otimes a)$ for all $a \in M$, $x \in V$. 

An invertible morphism 
$R: V \otimes V \rightarrow  V \otimes V$ is called {\it Yang-Baxter (YB) operator}, or an {\it R-matrix}, if it satisfies $$(R \otimes \mathbbm{1})(\mathbbm{1} \otimes R) (R \otimes \mathbbm{1})=(\mathbbm{1} \otimes R) (R \otimes \mathbbm{1})(\mathbbm{1} \otimes R).$$

An R-matrix $R$ is said to satisfy the {\it left wall condition} if it satisfies
$$ \mu_\ell (\mu_\ell \otimes \mathbbm{1}_V ) (\mathbbm{1}_M \otimes R) = 
 \mu_\ell (\mu_\ell \otimes \mathbbm{1}_V ) ,$$
 and the {\it right wall condition} is defined similarly. An R-matrix satisfies the wall condition if it satisfies both left and right wall conditions. The left wall condition is depicted in Figure~\ref{curtaincondition}. In the figure, the $V$-module $M$ is represented by the shaded vertical line, and thin lines represent $V$. The crossing at the left figure represents  the map $R$, and the map $\lambda_\ell: M \otimes V \rightarrow M$ is represented by merging two (shaded and thin) lines.

\begin{figure}[htb]
\begin{center}
\includegraphics[width=1.2in]{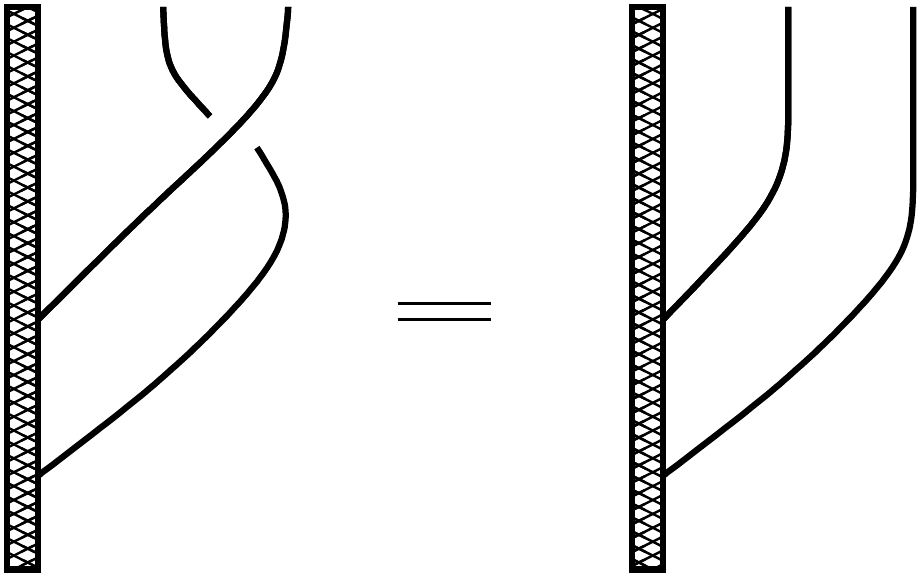}
\end{center}
\caption{The left  wall condition}
\label{curtaincondition}
\end{figure}

Let $R$ be an R-matrix over $V \otimes V$.
It is observed in \cite{PW} that for  the trivial action $\mu_\ell(1 \otimes e) = 1 = \mu_r (e \otimes 1)$ for every basis vector $e$  to  satisfy the wall condition is  that the matrix is {\it column unital}, i.e., the sum of entries of each column is $1$. 
We call the procedure of making a matrix column unital the {\it normalization}.

\subsection{Yang-Baxter  differentials}\label{sec:YBchain}

Let the maps 
$$d^\ell_{i, n}, d^r_{i, n}  \in {\rm Hom}(M \otimes V^{\otimes n } \otimes M, M \otimes V^{\otimes (n-1) }  \otimes M ) $$ be defined by
\begin{eqnarray*}
\lefteqn{d_{i, n}^\ell}\ \ &=&
 (\mu_\ell\otimes \mathbbm{1}^n)\circ (R\otimes \mathbbm{1}^{n-2})\circ \cdots   \\
 && \cdots \circ(\mathbbm{1}^{i-3}\otimes R\otimes\mathbbm{1}^{n-i+1})\circ(\mathbbm{1}^{i-2}\otimes R\otimes\mathbbm{1}^{n-i})\\
\lefteqn{d_{i, n}^r}\ \ &=&
(\mathbbm{1}^{n}\otimes \mu_r)\circ (\mathbbm{1}^{n-2}\otimes R)\circ\cdots\\
&& \cdots\circ (\mathbbm{1}^{n-i+1}\otimes R\otimes \mathbbm{1}^{i-3})\circ (\mathbbm{1}^{n-i}\otimes R\otimes \mathbbm{1}^{i-2}).
\end{eqnarray*}
We also use the notations 
$d^s_n=\sum_i d^s_{i,n}$ for $s=l, r$.
These maps are diagrammatically represented in Figure~\ref{curtain}.

The differentials of the Yang-Baxter homology is defined by
$$d_n= \sum_{i=1}^n (-1)^i [\ d_{i, n}^\ell - d_{i, n}^r \ ] . $$

\begin{figure}[htb]
\begin{center}
\includegraphics[width=3in]{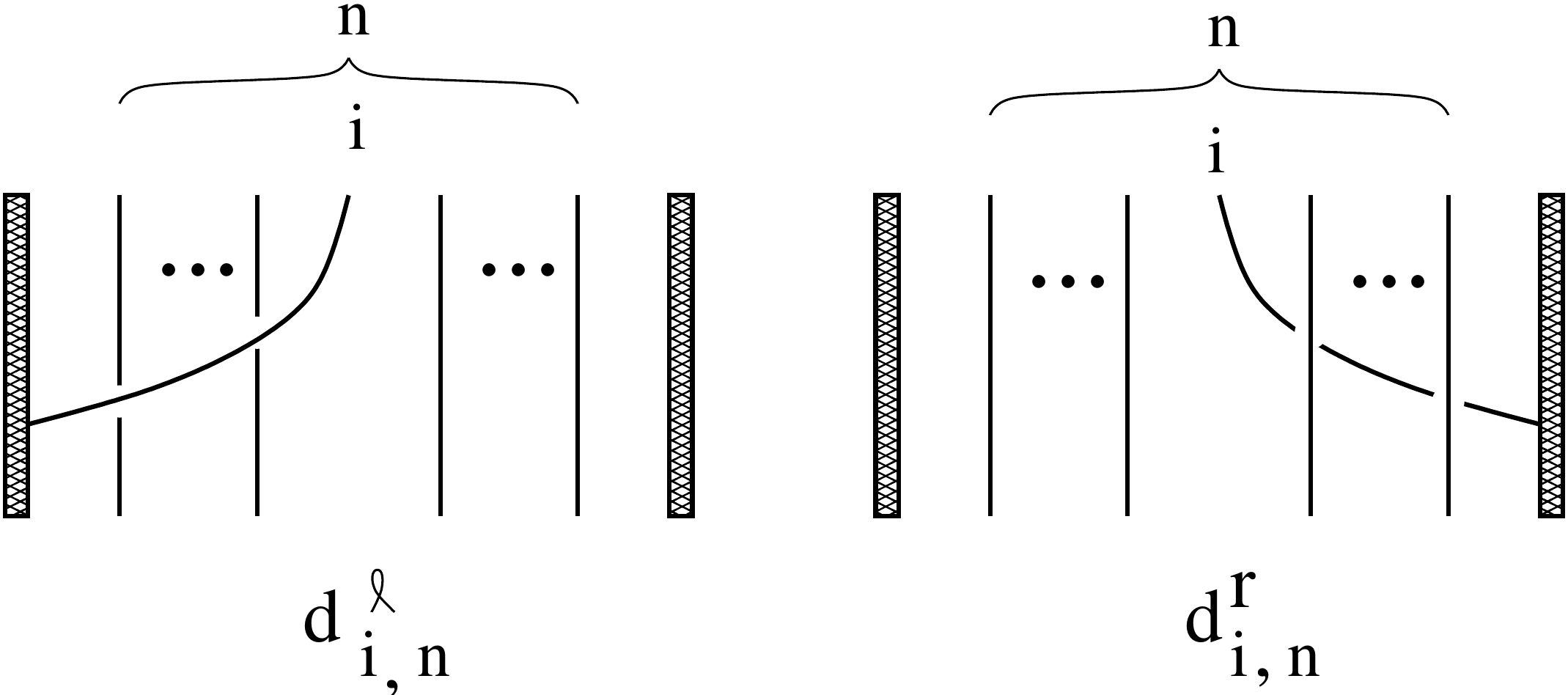}
\end{center}
\caption{Left and right curtain maps}
\label{curtain}
\end{figure}

In \cite{LeVe,PW}, it was proved that 
$d^2=0$,
so that $d$ defines a chain complex called Yang-Baxter (YB) homology.  The proof of this fact can be observed by diagrammatic means, and is illustrated in Figure~\ref{diff}.


\begin{figure}[htb]
\begin{center}
\includegraphics[width=3in]{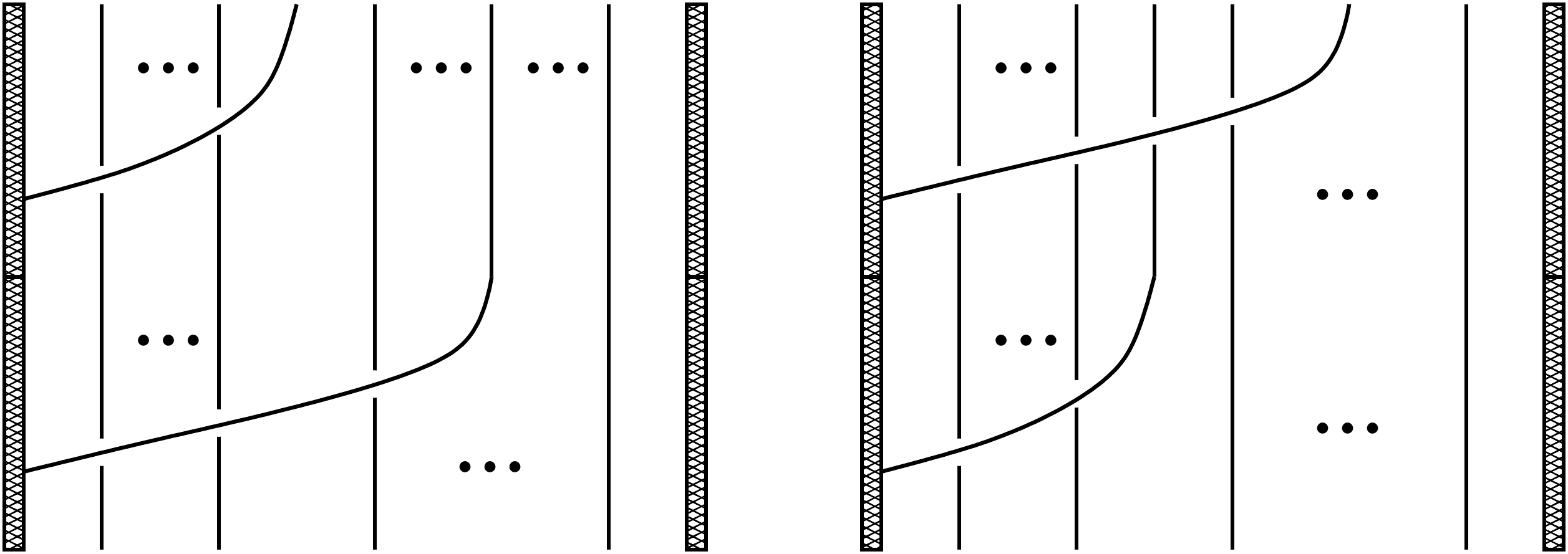}
\end{center}
\caption{$d^2=0$}
\label{diff}
\end{figure}

\subsection{Kauffman bracket (Jones) R-matrix}\label{sec:R}

For the R-matrix $R'$  that produces  the Jones polynomial, the normalization of making $R'$ column unital has been performed in \cite{PW}, where it is shown that  the normalized matrix takes the from

 $$R=\left( 
\begin{array}{cccc}
1 & 0 & 0 & 0 \\
0 & 1-y^2 & 1 & 0 \\
0 & y^2 & 0 & 0 \\
0&0&0&1
\end{array}
\right).$$
For the rest of the paper we focus on this specific R-matrix.

Let $e_1$, $e_2$ be the basis elements of the rank 2 free $k$-module $V$  with respect to  which the map $R$ is the above matrix.
Specifically, the rows and columns of $R$ are for the basis elements 
$e_1 \otimes e_1$, $e_1 \otimes e_2$, $e_2 \otimes e_1$, $e_2 \otimes e_2$ in this order.
For this specific R-matrix , 
a conjecture on YB homology groups is stated in  \cite{PW} as follows, where $X = (V,R)$.

\begin{conjecture}
$H_n(X)=k^2 \oplus ( k/(1-y^2) )^{a_n} \oplus ( k/(1-y^4) )^{s_{n-2}} $ 
where $s_n=\sum_{i=1}^{n+1}f_i$ is the partial sum of Fibonacci sequence with $f_1=f_2=1$ and $a_n$ is given by $a_1=0$ and $2^n=2+a_{n-1} + s_{n-3}+a_n+s_{n-2}$.
\end{conjecture}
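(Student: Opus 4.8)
The plan is to exploit the skein decomposition $R = \mathbbm{1} + \beta\alpha$ of Section~\ref{sec:KauffR}, together with the decomposition of the differentials in Theorem~\ref{thm:diff}, to reduce each homology computation to a finite problem of Smith normal forms over the principal ideal domain $k = \Q[y,y^{-1}]$. First I would substitute the skein relation into every crossing appearing in the curtain maps $d^\ell_{i,n}$ and $d^r_{i,n}$. Since each such map is a composition of copies of $R$ followed by a single trivial action $\mu_\ell$ or $\mu_r$, expanding each crossing as $\mathbbm{1} + \beta\alpha$ produces a sum of diagrams that are horizontal concatenations of cups and caps, closed off against the module line $M$. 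The wall (column unitality) condition collapses the strands meeting $M$, so that only a controlled family of diagrams survives; organizing these by the position and number of inserted cup-cap pairs recovers exactly the generators $g_k, g'_k, h_k, h'_k$, giving $d_n$ in the generating form of Theorem~\ref{thm:diff}.

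With $d_n$ in generating form, the second step is to record its matrix with respect to the basis $\{e_{i_1}\otimes\cdots\otimes e_{i_n}\}$ of $V^{\otimes n}$. Because $\alpha$ contributes only the entries $1$ and $-y^2$, the copairing $\beta$ the vector $e_1\otimes e_2 - e_2\otimes e_1$, and the identity term contributes $\mathbbm{1}$, every matrix entry is a Laurent polynomial in $y$, and the relevant invariant factors will be powers of $1-y^2$ and $1-y^4 = (1-y^2)(1+y^2)$. For $n = 2, 3$, and as a check $n = 4$, these matrices are small enough to write out explicitly; I would then compute the Smith normal forms of $d_n$ and $d_{n+1}$ over $k$ and read off $H_n(X) = \ker d_n / \operatorname{im} d_{n+1}$. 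The free rank should stabilize at $k^2$, with the torsion appearing as summands $k/(1-y^2)$ and $k/(1-y^4)$ whose multiplicities I would match against $a_n$ and $s_{n-2}$.

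The main obstacle is passing from this explicit low-dimensional verification to the pattern asserted for all $n$. For fixed small $n$ the Smith normal form is routine, and matching the conjectural formula confirms the case $n = 3$. For arbitrary $n$, however, one must control the rank and the elementary divisors of $d_n$ uniformly, and this is precisely where the Fibonacci structure must enter: the partial sums $s_n = \sum_{i=1}^{n+1} f_i$ and the recursion $2^n = 2 + a_{n-1} + s_{n-3} + a_n + s_{n-2}$ should emerge from a recursive description of how $g_k, g'_k, h_k, h'_k$ act as $n$ grows. I would therefore aim to establish recursion relations among these generating maps, expressing $Z_n(X)$ and $B_n(X)$ in dimension $n$ in terms of their analogues in dimension $n-1$, and to show that the torsion created at each step is annihilated exactly by $1 - y^2$ or $1 - y^4$. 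Proving that no higher torsion (say by $1 - y^{2m}$ for $m \geq 3$) ever appears, and that the free rank never exceeds $2$, is the delicate point: the skein formalism renders the individual diagrams transparent, but assembling them into a closed-form Smith normal form in every dimension is the genuine difficulty, and is why only the low-dimensional cases can be settled directly.
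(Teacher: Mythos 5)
First, a point of framing: the statement you were asked to prove is a conjecture (due to Przytycki and Wang), and the paper does not prove it either; it confirms the cases $n=2,3$ and supplies partial supporting evidence in higher dimensions. Measured against that, your proposal is essentially the paper's own approach. Your first two steps --- expanding every crossing via $R=\mathbbm{1}+\beta\alpha$ to obtain the generator decomposition of $d_n$ (this is Theorem~\ref{thm:diff}), then writing the matrices of $d_3$ and $d_4$ in the standard tensor basis and passing to Smith normal form over $k=\Q[y,y^{-1}]$ --- are exactly how the paper obtains $H_2(X)=k^2\oplus k/(y^2-1)\oplus k/(y^4-1)$ (Theorem~\ref{thm:secondhomology}) and $H_3(X)=k^2\oplus(k/(1-y^2))^{\oplus 2}\oplus(k/(1-y^4))^{\oplus 2}$ (Theorem~\ref{thm:thirdhomology}), confirming the conjecture for $n\le 3$. (One minor slip: the entries of $\alpha$ are $-y$ and $y^{-1}$, not $1$ and $-y^2$; the latter occur in the composite $J=\beta\alpha$.)

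Where the comparison becomes substantive is the general case. Your proposal gestures at recursion relations among $g_k, g'_k, h_k, h'_k$ that would propagate the torsion pattern from dimension $n-1$ to dimension $n$, but you do not produce them, and you candidly concede that this is the genuine difficulty. The paper does not produce them either --- the conjecture remains open --- but it does prove something concrete in every dimension (Section~\ref{sec:higherdim}): by evaluating $d_n$ and $d_{n+1}$ on the special chains $e^n_{j,0}$ and $e^n_{j,i}$ (Lemma~\ref{lem:aj} and Proposition~\ref{lem:bj}), it exhibits for every odd $n$ a rank 2 submodule of $H_n(X)$ annihilated by $y^4-1$, and for every even $n$ rank 1 submodules of $Z_n(X)$ that are, respectively, contained in $B_n(X)$ and annihilated by $y^2-1$. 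This is weaker than the recursion you hope for, but it is actually established, and it is the only higher-dimensional information available. So your plan, carried out, would reproduce the paper's theorems for $n\le 3$; to match the paper fully you would need to replace the aspirational recursion step with an explicit computation along distinguished chains of the sort just described, and to settle the conjecture itself both you and the paper would need the uniform control of elementary divisors that neither provides.
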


We note that $s_0$ is defined to be $1$, though this may not be explicit in \cite{PW}.

\section{Skein for the  normalized Kauffman bracket $R$-matrix}\label{sec:KauffR}

In this section we establish a skein relation for the normalized R-matrix defined above,
and define diagrammatic representations. 
It is not a priori the case that a normalized matrix $R$ of a YB solution $R'$ is a YB solution, but this fact is proved in \cite{PW}. 
We use the skein relation to provide a diagrammatic proof of this fact. We also provide lemmas on maps that appear in the skein that will be used in later sections.

\begin{lemma}
We have  $R=I + J$ where $I$ denotes the identity matrix and 
$J=\beta \alpha$, where 
$\alpha : V \otimes V \rightarrow k$ and 
$\beta : k \rightarrow V \otimes V$ are defined by 
 \begin{eqnarray*}
 \alpha (e_1 \otimes e_1) & = & \alpha(e_2 \otimes e_2)\quad = \quad 0, \\
\alpha(e_1 \otimes e_2) &=& -y , \\
\alpha(e_2 \otimes e_1) &=& y^{-1},  \\
\beta(1) &=&  y (e_1 \otimes e_2 - e_2 \otimes e_1).
\end{eqnarray*}

\end{lemma}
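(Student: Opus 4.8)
The plan is to verify the claimed decomposition by a direct computation of the matrix of the composite $J = \beta\alpha$ in the ordered basis $e_1\otimes e_1$, $e_1\otimes e_2$, $e_2\otimes e_1$, $e_2\otimes e_2$, and to compare it with $R - I$. Since $\alpha$ maps into the rank-one module $k$ and $\beta$ maps out of it, the composite $J$ is automatically a rank-one endomorphism of $V\otimes V$, expressible as the outer product of the column vector representing $\beta(1)$ with the row vector representing $\alpha$.

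First I would record the two ingredients as coordinate vectors in the chosen basis. The pairing $\alpha$ is the row $(0,\, -y,\, y^{-1},\, 0)$, and the copairing $\beta$ is determined by $\beta(1) = y(e_1\otimes e_2 - e_2\otimes e_1)$, i.e. the column $(0,\, y,\, -y,\, 0)^{\mathrm{t}}$. Forming the outer product then produces a $4\times 4$ matrix whose only nonzero entries occupy the middle two rows and columns, with values $y\cdot(-y) = -y^2$, $y\cdot y^{-1} = 1$, $(-y)\cdot(-y) = y^2$, and $(-y)\cdot y^{-1} = -1$.

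Next I would subtract the identity from $R$ and observe that
$$
R - I = \left(
\begin{array}{cccc}
0 & 0 & 0 & 0 \\
0 & -y^2 & 1 & 0 \\
0 & y^2 & -1 & 0 \\
0 & 0 & 0 & 0
\end{array}
\right),
$$
which agrees entry by entry with the matrix of $\beta\alpha$ computed above. This establishes $R = I + \beta\alpha$ and completes the verification.

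Because the statement reduces to a single $4\times 4$ matrix identity, there is no genuine obstacle; the only points requiring care are bookkeeping ones. I would fix the composition convention so that $\beta\alpha$ means ``apply $\alpha$ first, then $\beta$'', respect the fixed ordering of the tensor basis, and track the powers of $y$ so that the cancellations in the $(e_1\otimes e_2, e_2\otimes e_1)$ block come out correctly. I would also note that the image of $J$ is the line spanned by $e_1\otimes e_2 - e_2\otimes e_1$, which anticipates the cup/cap skein interpretation of $\alpha$ and $\beta$ exploited in the remainder of the paper.
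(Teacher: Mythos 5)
Your proposal is correct and follows essentially the same route as the paper's own proof: both identify $R-I$ as the $4\times 4$ matrix with the nonzero middle block and recognize it as the outer product of the column $(0,y,-y,0)^{\mathrm{t}}$ representing $\beta$ with the row $(0,-y,y^{-1},0)$ representing $\alpha$. The only difference is that you write out the individual entry products explicitly, which the paper leaves implicit.
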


\begin{proof}
Let 
$J=\left( 
\begin{array}{cccc}
0 & 0 & 0 & 0 \\
0 & -y^2 & 1 & 0 \\
0 & y^2 & -1  & 0 \\
0&0&0&0
\end{array}
\right)$.
Then we have $R=I + J$. 
Furthermore $J$ is written as 
$J=(0, y, -y, 0 )^T \cdot (0, -y, y^{-1}, 0 )$ where ${}^T$ denotes the transpose.
This means that $J$ is the composition of a pairing $\alpha: V \otimes V \rightarrow k$ represented by $(0, -y, y^{-1}, 0 )$ and the copairing $\beta: k \rightarrow V \otimes V$ 
represented by $(0, y, -y, 0 )^T $, and the result follows.
\end{proof}

Let $\xi, \zeta: V \rightarrow V$ be defined by 
$\xi(e_1)=y^2 e_1$, $\xi(e_2)=e_2$, $\zeta(e_1)=e_1$, $\zeta(e_2)=y^2 e_2$.
Then we have the following remark and lemma by straightforward calculations.

\begin{remark}\label{rem:maps}
	{\rm 
The maps $\alpha$,  $\xi$ and $\zeta$ can be written as follows:
\begin{eqnarray*}
\alpha (e_i\otimes e_j) &=& (-1)^i(1-\delta_{ij}) y^{j-i},\\
\zeta(e_i) &=& y^{2i-2} e_i,\\
\xi(e_i) &=& y^{4-2i} e_i,
\end{eqnarray*}	
where $i=1,2$ and $\delta_{ij}$ denotes the Kronecker's delta.
}
\end{remark}
\begin{lemma}
We have 
\begin{eqnarray*}
( \mathbbm 1  \otimes \beta)( \alpha \otimes \mathbbm 1) &=& \xi ,  \\ 
(\beta  \otimes  \mathbbm 1) (\mathbbm 1 \otimes \alpha ) &=& \zeta  .
\end{eqnarray*} 
\end{lemma}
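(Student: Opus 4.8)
The plan is to verify both identities by direct evaluation on the basis $\{e_1,e_2\}$ of $V$, since each side is a $k$-linear endomorphism of $V$ and agreement on a basis suffices. Both composites are ``zig-zag'' (snake) maps: each one inserts the copairing $\beta$ on one side of a strand and then contracts two adjacent strands with the pairing $\alpha$, so the only real content is to track which two tensor factors $\alpha$ pairs off and to keep the signs and powers of $y$ straight.

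For the first identity I would start from $e_i$, apply $\mathbbm{1}\otimes\beta$ to obtain $e_i\otimes\beta(1)=y\,(e_i\otimes e_1\otimes e_2-e_i\otimes e_2\otimes e_1)$, and then apply $\alpha\otimes\mathbbm{1}$, which contracts the first two tensor factors. This yields $y\,\alpha(e_i\otimes e_1)\,e_2-y\,\alpha(e_i\otimes e_2)\,e_1$. Because $\alpha$ vanishes on the diagonal vectors $e_1\otimes e_1$ and $e_2\otimes e_2$, exactly one of the two terms survives for each $i$: for $i=1$ only the second term contributes, giving $-y\,\alpha(e_1\otimes e_2)\,e_1=y^2 e_1$, and for $i=2$ only the first term contributes, giving $y\,\alpha(e_2\otimes e_1)\,e_2=e_2$. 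These are precisely $\xi(e_1)$ and $\xi(e_2)$. The second identity is handled symmetrically: applying $\beta\otimes\mathbbm{1}$ to $e_i$ and then $\mathbbm{1}\otimes\alpha$, which now contracts the last two factors, produces $y\,\alpha(e_2\otimes e_i)\,e_1-y\,\alpha(e_1\otimes e_i)\,e_2$, and the same vanishing-on-the-diagonal phenomenon leaves $\zeta(e_1)=e_1$ and $\zeta(e_2)=y^2 e_2$.

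Alternatively, to avoid the case analysis, I would run the computation once with indices using the closed forms in Remark~\ref{rem:maps}. Writing $\beta(1)=\sum_{a,b}B_{ab}\,e_a\otimes e_b$ with $B_{ab}=(-1)^{a+1}y\,(1-\delta_{ab})$ and $\alpha(e_p\otimes e_q)=(-1)^p(1-\delta_{pq})y^{q-p}$, the first composite sends $e_i$ to $\sum_{a,b}B_{ab}\,\alpha(e_i\otimes e_a)\,e_b$. The two Kronecker factors force $a\neq b$ and $a\neq i$ simultaneously, which over the index set $\{1,2\}$ can hold only when $b=i$ and $a$ is the complementary index $3-i$; substituting collapses the sum to the single monomial $(-1)^i y\cdot(-1)^i y^{3-2i}e_i=y^{4-2i}e_i=\xi(e_i)$. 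The mirror computation for $\sum_{a,b}B_{ab}\,\alpha(e_b\otimes e_i)\,e_a$ forces $a=i$, $b=3-i$ and gives $y^{2i-2}e_i=\zeta(e_i)$.

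I expect no genuine obstacle here: the statement is a finite linear-algebra check, exactly as the phrase ``straightforward calculations'' suggests. The only points requiring care are the reading convention, namely fixing that $\alpha\otimes\mathbbm{1}$ and $\mathbbm{1}\otimes\alpha$ pair the first and the last two factors respectively so that each composite is a well-defined map $V\to V$, and the clean cancellation of the sign $(-1)^i$ coming from $\alpha$ against the sign in $\beta(1)$, which is what makes the surviving coefficients pure powers of $y$ rather than signed ones.
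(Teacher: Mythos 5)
Your proposal is correct and matches the paper's approach: the paper offers no written proof beyond declaring the identities follow ``by straightforward calculations,'' and your basis-by-basis evaluation of the two zig-zag composites (with the correct diagrammatic reading that makes each one a map $V \to V$) is exactly that calculation, yielding $y^{4-2i}e_i = \xi(e_i)$ and $y^{2i-2}e_i = \zeta(e_i)$ as required.
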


\begin{figure}[htb]
\begin{center}
\includegraphics[width=2in]{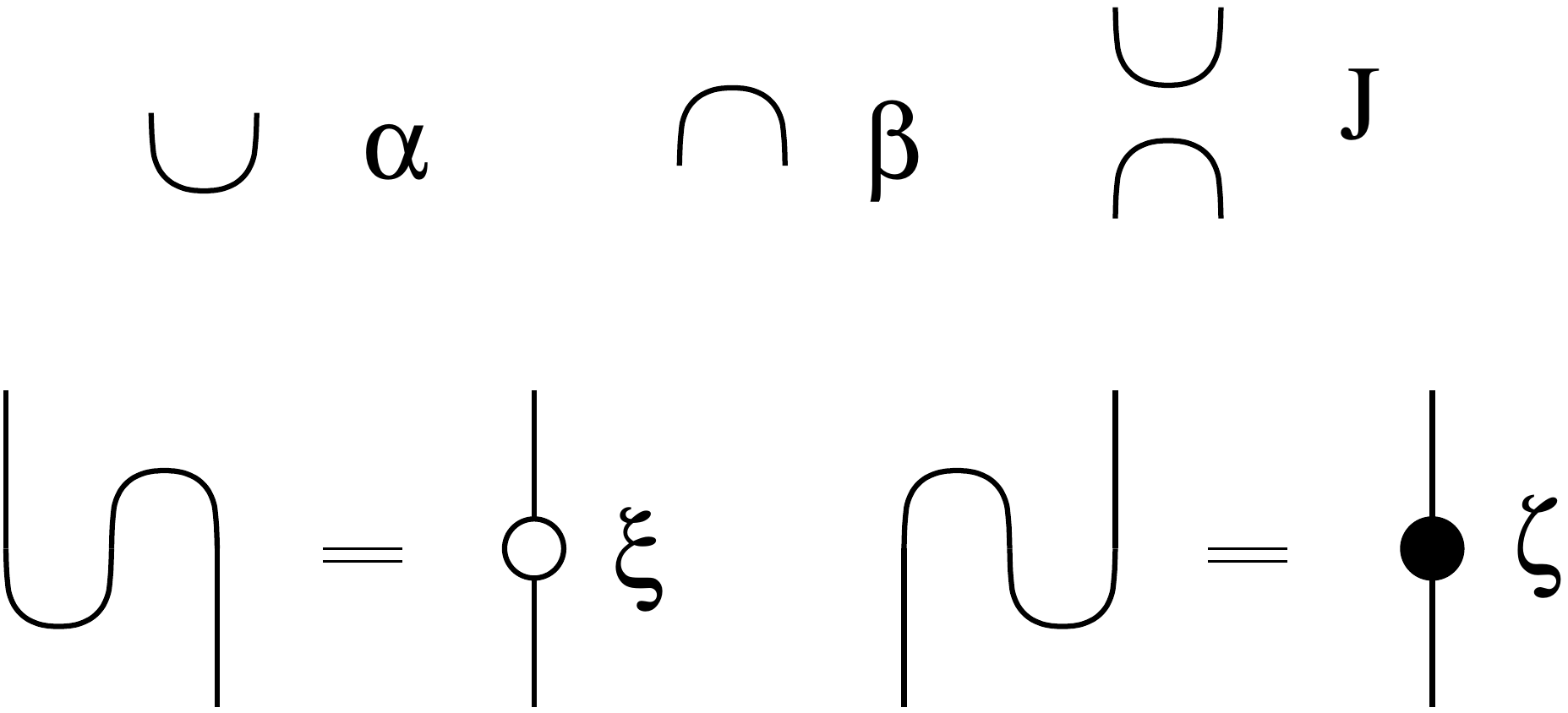}
\end{center}
\caption{Cup, cap and zig-zag maps}
\label{cupcap}
\end{figure}

Straightforward computations also show the following lemma, with diagrams found in Figure~\ref{lemmas}.

\begin{lemma}\label{lem:loop}
We have the following:
\begin{enumerate}
\item $\alpha \beta=-(y^2+1)$,
\item $\xi \zeta = \zeta \xi = y^2 \mathbbm{1}$,
\item $ \alpha (\lambda_\ell \otimes \mathbbm 1 ) = \mu \zeta $.
\end{enumerate}
\end{lemma}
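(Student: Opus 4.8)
The plan is to verify all three identities by direct evaluation of the relevant $k$-linear maps on the basis $\{e_1,e_2\}$ of $V$ (and on $1\in k$), reading off the values of $\alpha,\beta,\xi,\zeta$ from their defining formulas and from Remark~\ref{rem:maps}; the only real work is to present each computation so that its diagrammatic content (a closed loop, a zig-zag, a pairing sliding into the wall) stays visible. For (1), since $\alpha\beta\colon k\to k$ is multiplication by a scalar it is enough to evaluate at $1$: applying $\beta$ produces $y(e_1\otimes e_2-e_2\otimes e_1)$, and then $\alpha$, with $\alpha(e_1\otimes e_2)=-y$ and $\alpha(e_2\otimes e_1)=y^{-1}$, gives $y(-y-y^{-1})=-(y^2+1)$. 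This is exactly the value of the closed loop formed by capping a cup.

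For (2), I would note that $\xi$ and $\zeta$ are both diagonal: by Remark~\ref{rem:maps}, $\zeta(e_i)=y^{2i-2}e_i$ and $\xi(e_i)=y^{4-2i}e_i$. Composing in either order sends $e_i$ to $y^{2i-2}y^{4-2i}e_i=y^2e_i$ for $i=1,2$, so $\xi\zeta=\zeta\xi=y^2\mathbbm{1}$; commutativity needs no separate argument since diagonal maps commute. These are the two zig-zags of a cup--cap pair, and the point of the identity is that each collapses to the uniform scalar $y^2$.

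For (3), the substance is the interaction of the pairing $\alpha$ with the wall, i.e.\ with $M=k$ carrying the trivial action $\mu(1\otimes e_i)=1$. Here I would first pin down the diagrammatic meaning of $\lambda_\ell$ --- the pairing attached to the wall, so that $\lambda_\ell\otimes\mathbbm 1_V$ feeds a strand coming off the wall together with the remaining free strand into $\alpha$; under the identification $M\otimes V\cong V$ this makes $\alpha(\lambda_\ell\otimes\mathbbm 1_V)$ a map $V\to k$. Evaluating on $e_i$ and comparing with $\mu\zeta(e_i)=\mu(y^{2i-2}e_i)=y^{2i-2}$ then yields the claim, both sides taking the value $1$ on $e_1$ and $y^2$ on $e_2$. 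Alternatively the identity can be read off from the preceding lemma's formula $\zeta=(\beta\otimes\mathbbm 1)(\mathbbm 1\otimes\alpha)$ by merging one end into the wall with $\mu$. I expect this third part to be the only delicate one: the obstacle is bookkeeping --- correctly reading the diagram for $\lambda_\ell$ and keeping track of the $M=k$ identifications and the left/right action conventions --- rather than any nontrivial computation, which stays elementary once the domains are fixed.
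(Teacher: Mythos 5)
Your proposal is correct and takes essentially the same route as the paper, which offers no written argument beyond ``straightforward computations'' supported by the diagrams in Figure~\ref{lemmas}: your basis-by-basis evaluations (using $\beta(1)=y(e_1\otimes e_2-e_2\otimes e_1)$, the diagonal formulas for $\xi,\zeta$, and $\lambda_\ell(1)=y(e_2-e_1)$, so that both sides of (3) give $1$ on $e_1$ and $y^2$ on $e_2$) are exactly those computations. Nothing further is needed.
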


\begin{figure}[htb]
\begin{center}
\includegraphics[width=5in]{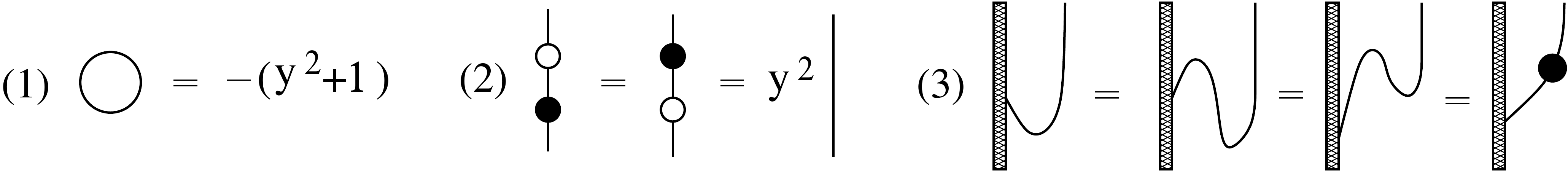}
\end{center}
\caption{A few identities}
\label{lemmas}
\end{figure}

%

It is not clear, a priori, whether the (column) normalized R-matrix satisfies the YBE.
We show below that our diagrammatic approach provides a proof for this fact.

\begin{figure}[htb]
\begin{center}
\includegraphics[width=3.5in]{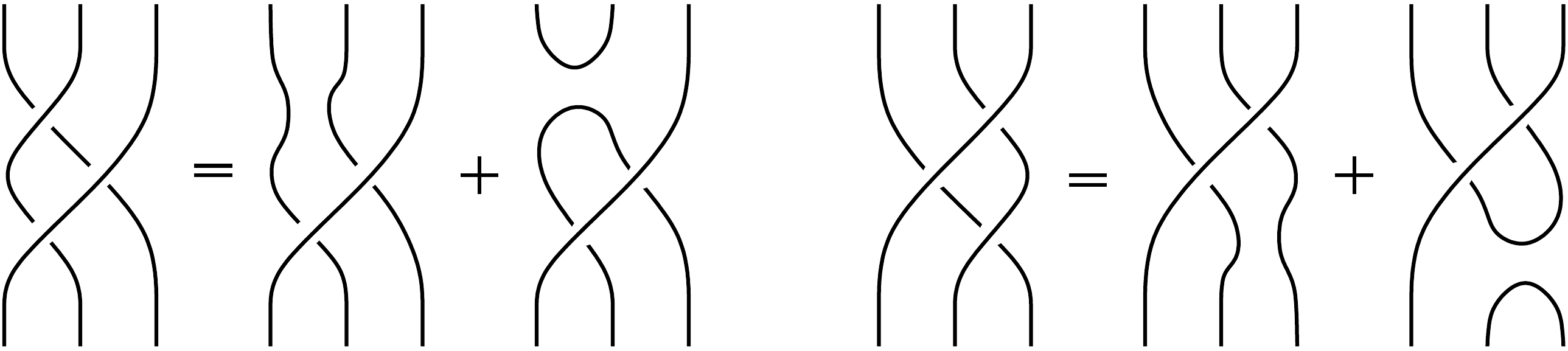}
\end{center}
\caption{The normalized R-matrix satisfies the YBE}
\label{YBE}
\end{figure}

\begin{figure}[htb]
\begin{center}
\includegraphics[width=3.5in]{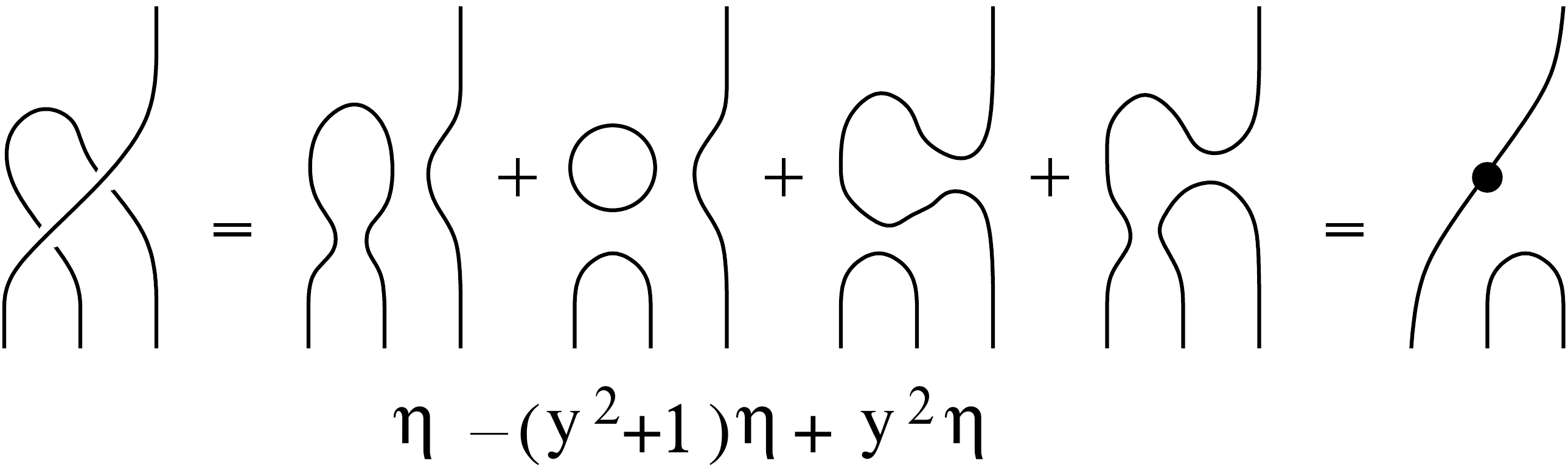}
\end{center}
\caption{The normalized R-matrix satisfies the YBE (continued, $\eta=\beta \otimes \mathbbm{1}$) }
\label{YBE2}
\end{figure}

\begin{theorem}\label{thm:R}
The normalized R-matrix  $R$ satisfies the YBE.
\end{theorem}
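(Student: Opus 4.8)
The plan is to feed the skein decomposition $R = I + J$ with $J = \beta\alpha$, established in the lemma above, into both sides of the YBE and expand. Writing $A = J \otimes \mathbbm{1}$ and $B = \mathbbm{1} \otimes J$ for the two ways $J$ acts on $V^{\otimes 3}$, and noting that $R \otimes \mathbbm{1} = I + A$ and $\mathbbm{1} \otimes R = I + B$ with the \emph{same} identity part $I = \mathbbm{1}^{\otimes 3}$, the left-hand side becomes $(I+A)(I+B)(I+A)$ and the right-hand side $(I+B)(I+A)(I+B)$. Expanding both and cancelling the terms $I$, $AB$, and $BA$ common to the two, the YBE is equivalent to the single identity
\[
A + A^2 + ABA = B + B^2 + BAB .
\]
I would actually prove the stronger fact that each side vanishes on its own, so that the two sides agree trivially.

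The quadratic terms are immediate. Since $A^2 = J^2 \otimes \mathbbm{1}$ and
\[
J^2 = \beta(\alpha\beta)\alpha = -(y^2+1)\,\beta\alpha = -(y^2+1)\,J
\]
by Lemma~\ref{lem:loop}(1), we get $A^2 = -(y^2+1)A$ and, symmetrically, $B^2 = -(y^2+1)B$. The substantive step is the cubic term $ABA$. Expanding it as an alternating string of cups $\alpha$ and caps $\beta$, the rightmost $\alpha$ collapses the first two strands to a single strand, and the interior composition then splits into the two zig-zag pieces $(\mathbbm{1}\otimes\alpha)(\beta\otimes\mathbbm{1}) = \zeta$ and $(\alpha\otimes\mathbbm{1})(\mathbbm{1}\otimes\beta) = \xi$ from the lemma preceding Lemma~\ref{lem:loop}. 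Their composite is $\xi\zeta = y^2\,\mathbbm{1}$ by Lemma~\ref{lem:loop}(2), so the interior of $ABA$ straightens to a single strand carrying the scalar $y^2$, which leaves $ABA = y^2 A$; the mirror computation gives $BAB = y^2 B$. Substituting, $A + A^2 + ABA = A - (y^2+1)A + y^2 A = 0$, and likewise $B + B^2 + BAB = 0$, which establishes the identity and hence the theorem.

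Diagrammatically (Figures~\ref{YBE} and~\ref{YBE2}) this is exactly the procedure of resolving every crossing by the skein relation, discarding the diagrams whose cup--cap patterns coincide on both sides, and reducing the surviving triple concatenations by straightening zig-zags and evaluating the resulting closed loop to $y^2$. The main obstacle is the bookkeeping in the cubic term: one must track precisely which adjacent pair of strands each $\alpha$ and $\beta$ acts on, so that the interior simplifies to the \emph{correct} composite $\xi\zeta$ (equivalently $\zeta\xi$) rather than some other product, and must confirm that the scalar produced is exactly $y^2$ so that it cancels the combined contribution of the $A$ and $A^2$ terms. Once the zig-zag identities and the loop value $\xi\zeta=y^2\mathbbm{1}$ are in hand the rest is mechanical; the work lies in choosing the grouping of cups and caps that exposes $\xi$ and $\zeta$ cleanly, which is where the diagrammatic formalism carries the argument.
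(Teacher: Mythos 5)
Your proof is correct, and it organizes the argument differently from the paper, although both rest on the same two computational inputs: the loop value $\alpha\beta=-(y^2+1)$ (Lemma~\ref{lem:loop}(1)) and the zig-zag straightening $\xi\zeta=\zeta\xi=y^2\mathbbm{1}$ (Lemma~\ref{lem:loop}(2)). You resolve all three crossings on both sides, cancel the common terms, and reduce the YBE to $A+A^2+ABA=B+B^2+BAB$, which you prove in the stronger form that each side vanishes; I verified the two relations you need, $A^2=-(y^2+1)A$ and $ABA=y^2A$ (and its mirror $BAB=y^2B$), and your grouping of the interior of $ABA$ into the composite $\xi\zeta$ acting on the spectator strand is the right one. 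This is the classical argument that a Temperley--Lieb-type assignment $\sigma_i\mapsto \mathbbm{1}+h_i$ with $h_i^2=\delta h_i$, $h_ih_{i\pm1}h_i=\gamma h_i$, and $1+\delta+\gamma=0$ satisfies the braid relation; here $\delta=-(y^2+1)$, $\gamma=y^2$. The paper instead resolves only one crossing on each side of the YBE, cancels the single common composite, and reduces to one residual identity $\Theta_1=\Theta_2$ in which two copies of $R$ remain unexpanded; these are then simplified by a second application of the skein relation and Lemma~\ref{lem:loop}(2) into the interchangeable forms $\zeta\otimes\beta$ and $\alpha\otimes\zeta$ (Figures~\ref{YBE} and~\ref{YBE2}). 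Your route costs more expansion bookkeeping but buys more: it establishes exactly the relations behind the paper's skew Temperley--Lieb remark, and your computation $ABA=y^2A$ (checkable directly on $e_1\otimes e_2\otimes e_1$) indicates that the relation $h_ih_{i+1}h_i=y^2h_{i+1}$ stated in that remark should read $h_ih_{i+1}h_i=y^2h_i$, the standard Temperley--Lieb form, since the word $h_ih_{i+1}h_i$ is a palindrome and its image is proportional to $h_i$. One cosmetic point: you quote the zig-zag identities with the factors in the opposite order from the paper's lemma, e.g.\ $(\alpha\otimes\mathbbm{1})(\mathbbm{1}\otimes\beta)=\xi$ versus the paper's $(\mathbbm{1}\otimes\beta)(\alpha\otimes\mathbbm{1})=\xi$; this is only a discrepancy between functional (right-to-left) and diagrammatic (left-to-right) composition conventions, and the maps you actually use are assigned to the correct zig-zags.
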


\begin{proof}
	Applying the skein relation to the Reidemeister move III as in Figure~\ref{YBE} shows that it is enough to prove that the second summand on the RHS of the first equality is the same as the second summand on the RHS of the second equality. 
	Let us denote these maps $V^{\otimes 3} \longrightarrow V^{\otimes 3}$ by $\Theta_1$ and $\Theta_2$ respectively. We can further simplify $\Theta_i$ as the products
	\begin{eqnarray*}
		\Theta_1 &=& \Phi_1 ( \alpha \otimes  \mathbbm{1}  ) ,\\
		\Theta_2 &=& ( \mathbbm{1} \otimes \beta)  \Phi_2, 
		\end{eqnarray*}  
where $\Phi_1: V \longrightarrow V^{\otimes 3}$ is depicted in the left side of Figure~\ref{YBE2}, and a similar definition is given for $\Phi_2 : V^{\otimes 3} \longrightarrow V$. 
Using again the skein relation to the diagrammatic definitions of $\Phi_i$, $i= 1,2$ and the definitions of cup/cap and zig-zag maps as in Figure~\ref{YBE2} (where Lemma~\ref{lem:loop} (2) was used) we see that
	\begin{eqnarray*}
		\Phi_1 &=& \zeta \otimes \beta,\\
		\Phi_2 &=&  \alpha \otimes \zeta,
		\end{eqnarray*}
and we obtain 
$$ 	\Theta_1 = \Phi_1 ( \alpha \otimes  \mathbbm{1}  ) = ( \zeta \otimes \beta) ( \alpha \otimes  \mathbbm{1}  ) =  ( \mathbbm{1} \otimes \beta) (  \alpha \otimes \zeta)=  ( \mathbbm{1} \otimes \beta)  \Phi_2 =	\Theta_2 $$
	which concludes the proof.
\end{proof}

\begin{remark}
{\rm
The modified bracket skein relation $R=I + J$ and Theorem~\ref{thm:R} implies that this $R$-matrix defines a braid group representation that factors through a {\it skew} Temperley-Lieb algebra ${\rm STL}_n$ defined as follows.

For a positive integer $n$,  ${\rm STL}_n$ is a $k$-algebra ($k=\Q[y, y^{-1}]$) generated by $h_i$, $i=1, \ldots, n-1$ and relations 
\begin{eqnarray*}
& & h_i h_i = - (y^2+1) h_i \quad i=1, \ldots, n-1, \\
& & h_i h_{i+ 1} h_i = y^2 h_{i+1} \quad  i=1, \ldots, n-2, \\
& & h_i h_{i- 1} h_i = y^2 h_{i-1} \quad  i=2, \ldots, n-1, \\
& & h_i h_j = h_j h_i \quad |i - j | >1 , \ i, j +1, \ldots n-1 .
\end{eqnarray*}

Diagrammatic representation of $h_i$ is to place a pair of cup and cap at the $i^{\rm th}$ and $(i+1)^{\rm st}$ positions as for the Temperley-Lieb algebra (Figure~\ref{STL} (A)). 
The first relation follows from Lemma~\ref{lem:loop} (1). 
The second  relation is depicted in Figure~\ref{STL} (B) which follows from Lemma~\ref{lem:loop} (2). 
These diagrammatic correspondence and computations imply that the assignment
$h_i \mapsto \mathbbm{1}^{i-1} \otimes (\beta \alpha) \otimes  \mathbbm{1}^{n-i-1}$ induces a homomorphism 
${\rm STL}_n \rightarrow {\rm Aut} (V^{\otimes n} ) $ where $V$ is a rank 2 $k$-module.
The skein $R=I+J$ where $J=\beta\alpha$ implies that the braid group representation induced from $R$, 
$B_n \rightarrow  {\rm Aut} (V^{\otimes n} ) $ defined by $\sigma_i \mapsto  \mathbbm{1}^{i-1}\otimes  R\otimes  \mathbbm{1}^{n-i-1} $.

\begin{figure}[htb]
\begin{center}
\includegraphics[width=2in]{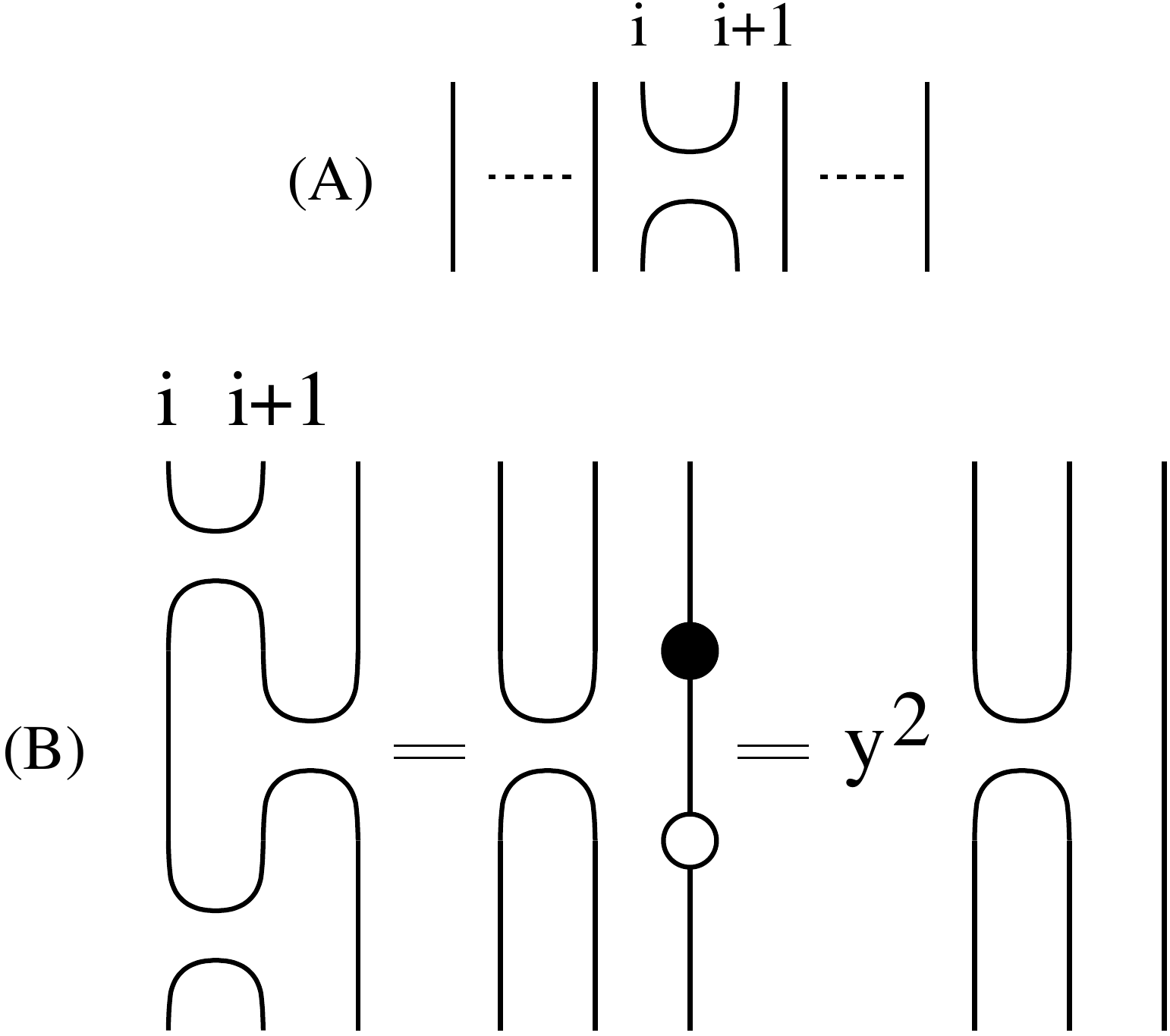}
\end{center}
\caption{Skew Temperley-Lieb algebra generator and relation }
\label{STL}
\end{figure}

}
\end{remark}

\begin{figure}[htb]
\begin{center}
\includegraphics[width=2.5in]{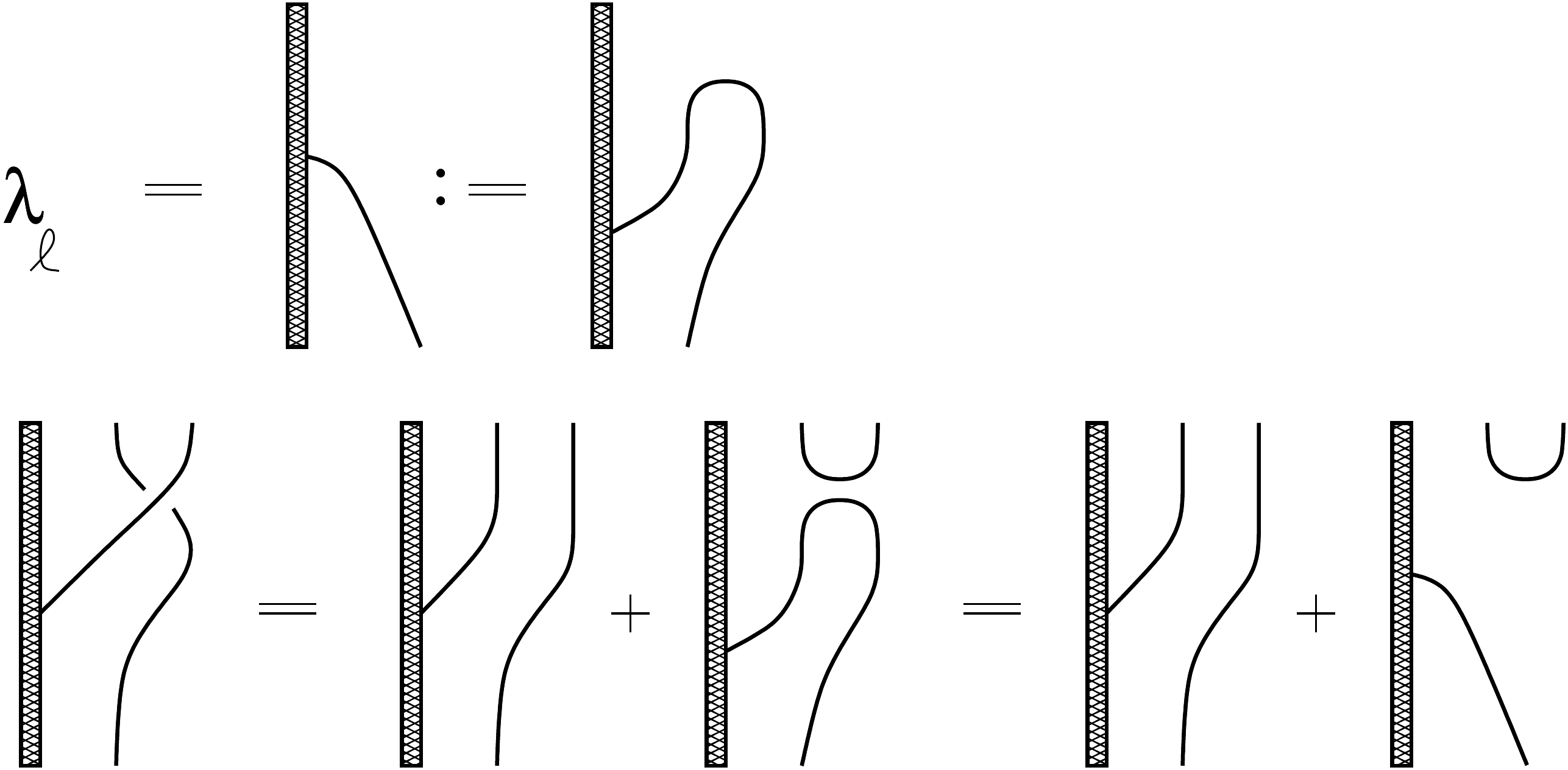}
\end{center}
\caption{Definition of $\lambda_\ell$ and how it appears in skein}
\label{lambda}
\end{figure}

\begin{figure}[htb]
\begin{center}
\includegraphics[width=2in]{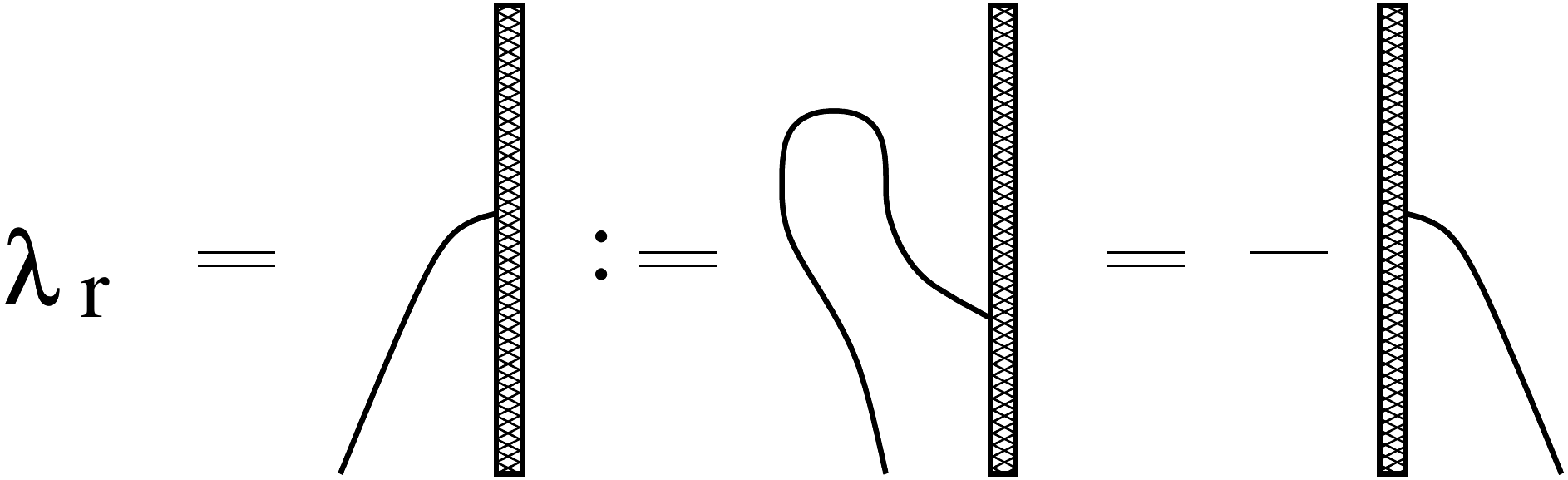}
\end{center}
\caption{Definition of $\lambda_r$ which is the negative of $\lambda_\ell$}
\label{lambdar}
\end{figure}

The left and right curtain maps 
$\lambda_\ell , \lambda_r : k \rightarrow V$ are defined 
as in Figures~\ref{lambda} 
and \ref{lambdar}, respectively, and computed as 
$$\lambda_\ell(1)=y (e_2 - e_1) \quad {\rm resp.} \quad
\lambda_r (1) = y (e_1 - e_2) .$$
In particular, as depicted  in   Figure~\ref{lambdar}, we obtain that
 $\lambda_r = - \lambda_\ell $.

\begin{figure}[htb]
\begin{center}
\includegraphics[width=2in]{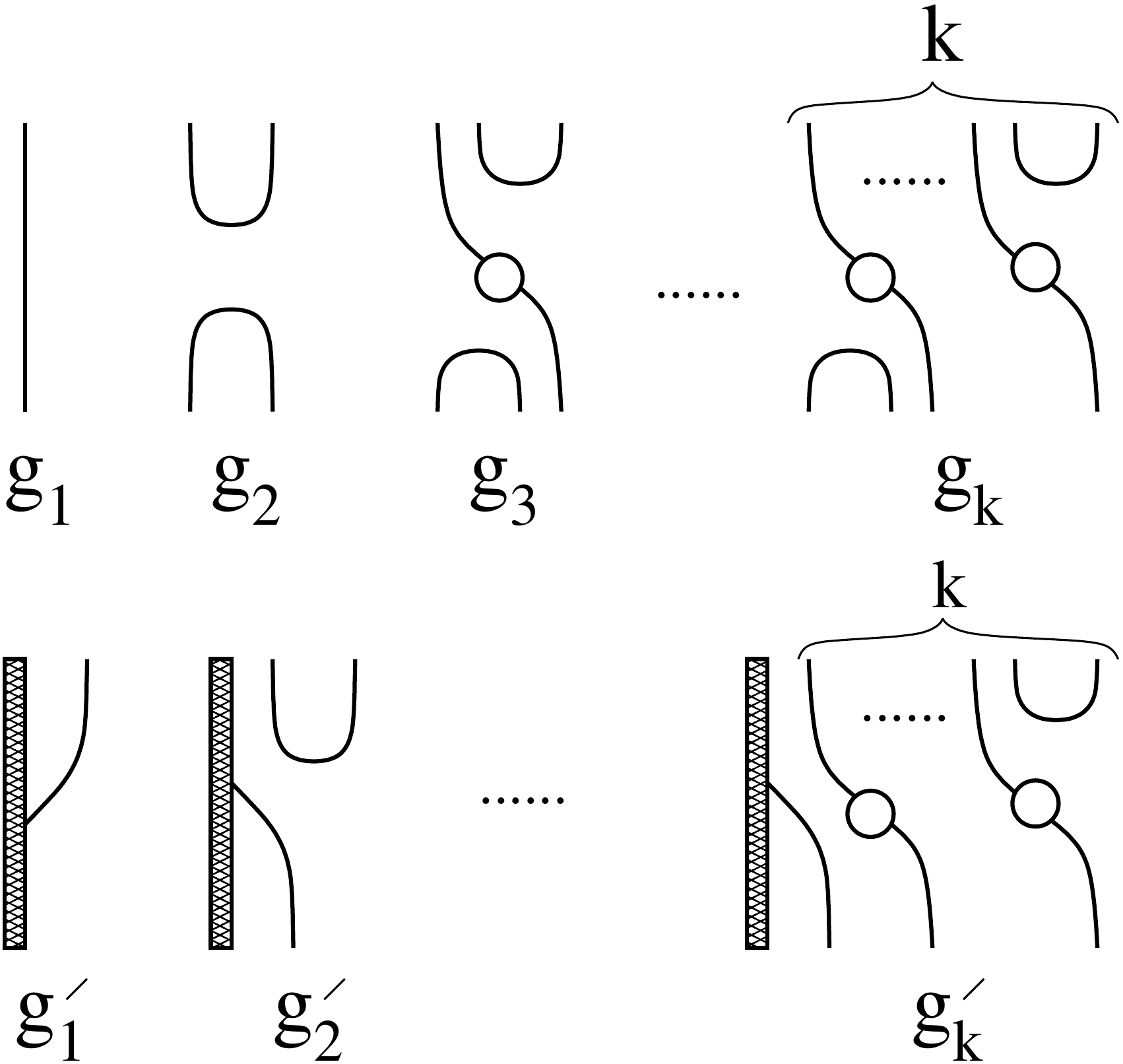}
\end{center}
\caption{Left generators}
\label{leftgens}
\end{figure}

\section{Skein theoretic decomposition of Yang-Baxter differentials}\label{sec:skeindifferential}

In this section we compute a general decomposition of the differentials $d_n$, when the Yang-Baxter operator $R$ satisfies the skein relation $R=I + J$.

We introduce the maps $\{g_i\}$, $\{g'_j\}$ corresponding to the diagrams depicted in Figure~\ref{leftgens}. 
Similar definitions and considerations hold for $\{h_k\}$ and $\{h'_t\}$, right differential generators. 
Our main objective is to show that the Yang-Baxter differentials corresponding to $R=I + J$ can be written 
as described in Theorem~\ref{thm:diff}.




\begin{theorem} \label{thm:diff}
The left differential is written as 
$$d^\ell_n = \sum_{S(n)} \  [ \   g_{i_0} ' g_{i_1}^{k(1)}  \cdots  g_{i_h}^{k(h)} g_{1}^{2k}   \  ]  
$$
where
 $$S(n)=\{ (i_0 , \ldots,  i_h ; k(1) , \ldots , k(h) )\  |\  i_h \neq 1,\ 
i_0 + i_1^{k(1)} + \cdots + i_h^{k(h)} + 2k =n  \}  .$$ 

%
The $n^{\rm th}$ Yang-Baxter differential decomposes as
 $$d_n=-  \sum [ \    g_{i_0} ' g_{i_1}^{k(1)}  \cdots  g_{i_h}^{k(h)} g_{1}^{2k}   \ ]  
 +  \sum [ \   h_1^{2k} h_{i_k}^{k(h)} \cdots g_{i_1}^{k(1)}  h_{i_0} '    \ ] $$ if  $n$ is odd, and 
 $$d_n=  \sum [ \    g_{i_0} ' g_{i_1}^{k(1)}  \cdots  g_{i_h}^{k(h)} g_{1}^{2k}   \ ]  
 +  \sum [ \   h_1^{2k} h_{i_k}^{k(h)} \cdots g_{i_1}^{k(1)}  h_{i_0} '    \ ] $$  if  $n$ is even, where all sums are over $S(n)$. 
\end{theorem}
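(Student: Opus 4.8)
The plan is to establish the first identity --- the decomposition of the \emph{unsigned} left differential $d^\ell_n=\sum_i d^\ell_{i,n}$ over the index set $S(n)$ --- and then to obtain the full statement for $d_n$ by running the mirror-image argument on the right curtain maps and tracking the signs $(-1)^i$ in $d_n=\sum_{i=1}^{n}(-1)^i[\,d^\ell_{i,n}-d^r_{i,n}\,]$. First I would substitute the skein relation $R=I+\beta\alpha$ into each curtain map. By definition $d^\ell_{i,n}$ is the composition of the $i-1$ maps $\mathbbm{1}^{\,j-2}\otimes R\otimes\mathbbm{1}^{\,n-j}$ for $j=i,\dots,2$, capped on the left by the wall action $\mu_\ell$; expanding every crossing as $I+\beta\alpha$ therefore writes $d^\ell_{i,n}$ as a sum of $2^{\,i-1}$ diagrams, each a horizontal concatenation of identity strands, cups $\alpha$ and caps $\beta$, terminated at the left wall.

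Next I would put each expanded diagram into a normal form using only the relations of Section~\ref{sec:KauffR}. Adjacent cup--cap pairs collapse to the zig-zag maps via $(\mathbbm{1}\otimes\beta)(\alpha\otimes\mathbbm{1})=\xi$ and $(\beta\otimes\mathbbm{1})(\mathbbm{1}\otimes\alpha)=\zeta$; closed loops are removed by Lemma~\ref{lem:loop}(1), each contributing the scalar $-(y^2+1)$; opposite zig-zags on a doubled strand simplify by Lemma~\ref{lem:loop}(2), $\xi\zeta=\zeta\xi=y^2\mathbbm{1}$; and a cup meeting the wall is rewritten by Lemma~\ref{lem:loop}(3), $\alpha(\lambda_\ell\otimes\mathbbm{1})=\mu\zeta$, which is exactly what produces the primed, wall-touching generator. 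After these reductions every surviving diagram is a horizontal concatenation of the left generators of Figure~\ref{leftgens}: a leftmost wall block $g_{i_0}'$ of width $i_0$, a run of repeated zig-zag blocks $g_{i_1}^{k(1)}\cdots g_{i_h}^{k(h)}$, and a tail $g_1^{2k}$ of even width $2k$ that absorbs the scalars $-(y^2+1)$ produced by the $k$ loops.

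The combinatorial heart of the proof is to show that these normal forms are in bijection with $S(n)$. I would argue by induction on $n$, peeling off the rightmost strand and matching the resulting recursion to the one defining $S(n)$: reading the pattern of $I$ versus $\beta\alpha$ choices along the $n$ strands yields a composition in which $i_j$ records the width of the $j$-th block, $k(j)$ its multiplicity, and $2k$ the width consumed by the loop tail, so that $i_0+i_1^{k(1)}+\cdots+i_h^{k(h)}+2k=n$; the condition $i_h\neq1$ records that a trailing width-one block is always absorbed into the tail $g_1^{2k}$, making the decomposition unique. Verifying that distinct crossing patterns give distinct reduced terms, and that every tuple in $S(n)$ is realized, is the step that demands the most care, since one must check that the lemma reductions never accidentally identify two a priori different diagrams.

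Finally, to assemble $d_n$ I would repeat the analysis for the right curtain maps $d^r_{i,n}$, whose diagrams are the horizontal reflections of the left ones and reduce to the mirrored concatenations $h_1^{2k}h_{i_h}^{k(h)}\cdots h_{i_0}'$ of the right generators (using $\lambda_r=-\lambda_\ell$ to fix their overall sign). The \emph{main obstacle} I anticipate is the sign bookkeeping for the left-generator part $\sum_i(-1)^i d^\ell_{i,n}$ of $d_n$: the bijection above should show that each surviving term comes from a single source index $i$ with $i=n-2k$, whence $(-1)^i=(-1)^{n}$ uniformly, while the loop scalars $-(y^2+1)$ are already carried inside $g_1^{2k}$. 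Confirming this relation $i\equiv n\pmod 2$ on every surviving term is precisely what turns the signed sum into $(-1)^n d^\ell_n$, giving the minus sign for odd $n$ and the plus sign for even $n$; the identical computation fixes the sign of the $h$-block, and the two sums over $S(n)$ then combine into the stated parity-dependent formula.
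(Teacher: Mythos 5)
Your overall plan (expand every crossing via $R=I+\beta\alpha$, reduce the resulting diagrams, and index the survivors by $S(n)$) is in the spirit of the paper, and the reduction machinery you invoke is essentially right. But the step you yourself flag as the main obstacle --- the sign bookkeeping --- is resolved incorrectly, and this is a genuine gap. Your claim that ``each surviving term comes from a single source index $i$ with $i=n-2k$'' is false. A reduced diagram typically occurs in the expansion of \emph{many} of the curtain maps $d^\ell_{i,n}$: for instance, the all-identity resolution $g_1'g_1^{n-1}=\mu\cdot\mathbbm{1}^{n-1}$ occurs once in \emph{every} $d^\ell_{i,n}$, $i=1,\dots,n$, so its coefficient in $\sum_i(-1)^i d^\ell_{i,n}$ is $\sum_{i=1}^n(-1)^i$, which is $-1$ for $n$ odd and $0$ for $n$ even; similarly $g_2'g_1^{n-2}$ occurs in both $d^\ell_{2,n}$ and $d^\ell_{3,n}$ (and more) and cancels in pairs. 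It is exactly this incomplete pairwise cancellation --- not any property of a bijection between crossing patterns and tuples --- that decides which concatenations survive, and it is what produces the even-exponent restriction on the tail $g_1^{2k}$ in $S(n)$. Your alternative explanation of that restriction is also wrong on two counts: no closed loops ever form in the staircase curtain diagrams (the moving strand crosses each other strand exactly once, so two adjacent $\beta\alpha$ resolutions compose into zig-zags $\xi$, never into $\alpha\beta$), so Lemma~\ref{lem:loop}(1) is never invoked; and $g_1^{2k}$ is simply the identity on the $2k$ rightmost strands untouched by the curtain, not a repository for scalars.

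The paper avoids the multiplicity problem by cancelling \emph{before} expanding. A single application of the skein relation to one crossing gives $d^\ell_{2i,2i}=d^\ell_{2i-1,2i-1}+\Psi_{2i-2}\cdot\alpha$, so consecutive terms of the alternating sum collapse pairwise (Lemma~\ref{lem:leftpsi}), leaving only the auxiliary operators $\Psi_{2(i-1)}\cdot\alpha\cdot\mathbbm{1}^{n-2i}$ (plus $-\mu\cdot\mathbbm{1}^{n-1}$ when $n$ is odd), whose trailing identity strands are automatically even in number and whose global sign is uniform --- this is where the $(-1)^n$ and the parity condition come from, with no multiplicity count needed. Only then are the $\Psi_m$ expanded recursively (Lemmas~\ref{lem:leftdiff} and~\ref{lem:psimap}) and matched, by an induction on the recursively defined diagram families $\Gamma^{(n,n+2)}$ and $\Lambda^{(n,n)}$, with the index set $S(n)$. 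To repair your argument you would have to prove that the \emph{signed multiplicity} of every reduced diagram across all $d^\ell_{i,n}$ equals $(-1)^n$ if the diagram lies in $S(n)$ and $0$ otherwise; that computation is essentially the telescoping the paper performs, and the uniqueness claim as you state it cannot substitute for it.
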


The proof is given in Appendix~\ref{apdx:diff}. We note that the exponents of the identity maps $g_1(=h_1$) are even. Therefore it is not the case that all possible horizontal concatenations of generating maps appear in the decompositions.

Specializing to the case in which $R$ is the normalized R-matrix corresponding to Jones polynomial, as in Section~\ref{sec:KauffR}, we compute the generators $g_i$ and $g'_i$. 

\begin{lemma}\label{lem:generators}
On basis vectors, the left generators $g_k$ and $g'_k$, with $k\geq 2 $, satisfy
\begin{eqnarray*}
g_k(e_{i_1}\otimes \cdots \otimes e_{i_k}) &=& \theta(i_1,\ldots, i_k) (e_1\otimes e_2-e_2\otimes e_1)\bigotimes_{t=1}^{k-2} e_{i_t},\\ 
g_k' (e_{i_1}\otimes \cdots \otimes e_{i_k}) &=& \theta(i_1,\ldots, i_k) (e_2- e_1)\bigotimes_{t=1}^{k-2} e_{i_t},
\end{eqnarray*}
where $\theta(i_1,\ldots, i_k) := (-1)^{i_{k-1}}(1-\delta_{i_{k-1}i_k})y^{i_k-i_{k-1}}y^{4(k-2)-2\sum_{j=1}^{k-2}i_j}y$. 
\end{lemma}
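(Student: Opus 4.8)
The plan is to read off the algebraic content of the diagrams defining $g_k$ and $g'_k$ in Figure~\ref{leftgens}, write each as a composite of the elementary maps $\alpha$, $\beta$ (resp.\ $\lambda_\ell$) and $\xi$, and then evaluate that composite on a basis vector $e_{i_1}\otimes\cdots\otimes e_{i_k}$. Every scalar that arises will come from substituting the known values of these maps recorded in Remark~\ref{rem:maps} together with the zig-zag identities $(\mathbbm 1\otimes\beta)(\alpha\otimes\mathbbm 1)=\xi$ and $(\beta\otimes\mathbbm 1)(\mathbbm 1\otimes\alpha)=\zeta$.

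First I would turn the diagram for $g_k$ into maps. Reading it from the bottom (the input $V^{\otimes k}$) upward, the two rightmost strands close in a cup, that is, the pairing $\alpha$ applied to $e_{i_{k-1}}\otimes e_{i_k}$; the two leftmost output strands are produced by a cap, that is, the copairing $\beta$; and each of the remaining $k-2$ strands runs through a single zig-zag. By the zig-zag identity each such zig-zag is exactly $\xi$, so that $g_k=(\beta\otimes\mathbbm 1^{k-2})\circ(\xi^{\otimes(k-2)}\otimes\alpha)$, while $g'_k$ is the same composite with the cap $\beta$ replaced by the left curtain map $\lambda_\ell$, giving $g'_k=(\lambda_\ell\otimes\mathbbm 1^{k-2})\circ(\xi^{\otimes(k-2)}\otimes\alpha)$. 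The real content of this step is the identification of the $k-2$ middle zig-zags with $\xi^{\otimes(k-2)}$; everything after it is bookkeeping.

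Next I would apply this composite to $e_{i_1}\otimes\cdots\otimes e_{i_k}$. Applying $\xi^{\otimes(k-2)}\otimes\alpha$ first pulls out the scalar $\alpha(e_{i_{k-1}}\otimes e_{i_k})\prod_{t=1}^{k-2}y^{4-2i_t}$ and leaves $e_{i_1}\otimes\cdots\otimes e_{i_{k-2}}$, using $\xi(e_i)=y^{4-2i}e_i$ and collecting $\prod_{t=1}^{k-2}y^{4-2i_t}=y^{4(k-2)-2\sum_{t=1}^{k-2}i_t}$. Applying $\beta\otimes\mathbbm 1^{k-2}$ then prepends $\beta(1)=y(e_1\otimes e_2-e_2\otimes e_1)$, contributing the final factor $y$ together with the bivector. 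Substituting $\alpha(e_{i_{k-1}}\otimes e_{i_k})=(-1)^{i_{k-1}}(1-\delta_{i_{k-1}i_k})y^{i_k-i_{k-1}}$ from Remark~\ref{rem:maps} and gathering all scalars into $\theta(i_1,\ldots,i_k)$ gives exactly the stated formula for $g_k$. For $g'_k$ the computation is identical except that $\beta(1)$ is replaced by $\lambda_\ell(1)=y(e_2-e_1)$; since both copairings carry the same prefactor $y$, the scalar $\theta$ is unchanged and only the output bivector $(e_1\otimes e_2-e_2\otimes e_1)$ becomes $(e_2-e_1)$.

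I expect the main obstacle to be precisely the first step: extracting the composite from the diagram correctly, and in particular checking that each of the $k-2$ pass-through strands contributes exactly one factor of $\xi$ (rather than $\zeta$, or a product of several factors). This hinges on the orientation of the zig-zags in Figure~\ref{leftgens} and on invoking the correct one of the two zig-zag identities. Once that is settled the claim is a direct substitution with nothing left to prove; as a consistency check one could instead argue by induction on $k$, starting from $g_2=\beta\alpha$ and adjoining one further $\xi$-decorated strand at each step, but the direct evaluation above is the cleaner route.
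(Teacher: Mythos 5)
Your proposal is correct and takes essentially the same approach as the paper: the paper's proof consists exactly of the direct computation from the diagrammatic definition of $g_k$, $g'_k$ together with Remark~\ref{rem:maps}, which it explicitly leaves to the reader, and you carry that computation out (reading the diagram as $(\beta\otimes\mathbbm{1}^{k-2})\circ(\xi^{\otimes(k-2)}\otimes\alpha)$, resp.\ with $\lambda_\ell$ in place of $\beta$). Your worry about $\xi$ versus $\zeta$ on the pass-through strands is resolved in favor of $\xi$, as your formula reproduces the stated $\theta$ and is consistent with how the lemma is used in the proof of Lemma~\ref{lem:d32}.
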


\begin{proof}
The lemma is a direct computation using the definition of $g_k$ and $g'_k$, along with Remark~\ref{rem:maps}. We leave this verification to the reader. 
\end{proof}

Similarly, we compute the right generators $h_l$ and $h'_l$ with coefficients $\tau(i_1,\ldots, i_k)$.

\begin{lemma}\label{lem:rightgenerators}
	On basis vectors, the right generators $h_l$ and $h'_l$, with $l \geq 2$, satisfy
	\begin{eqnarray*}
	h_l(e_{i_1}\otimes \cdots \otimes e_{i_l}) &=& \tau(i_1,  \ldots, i_l) \bigotimes_{j=3}^{l} e_{i_j}\otimes (e_1\otimes e_2-e_2\otimes e_1),\\
		h'_l(e_{i_1}\otimes \cdots \otimes e_{i_l}) &=& \tau(i_1,  \ldots, i_l) \bigotimes_{j=3}^{l} e_{i_j}\otimes (e_1-e_2),
	\end{eqnarray*}
	where $\tau(i_1, \ldots, i_l) := (-1)^{i_1}(1-\delta_{i_1,i_2})y^{i_2-i_1}y^{2\sum_{t=3}^li_t}y^{2(l-2)}y$.
\end{lemma}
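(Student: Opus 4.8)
The plan is to mirror the proof of Lemma~\ref{lem:generators} for the right generators, since the diagrams for $h_l$ and $h'_l$ are the horizontal reflections of those for $g_k$ and $g'_k$: where the left generators apply a cup $\alpha$ at the \emph{top} positions and zig-zag maps $\xi$ to the strands passing through, the right generators apply $\alpha$ at the \emph{bottom} positions and $\zeta$ to the passing strands. Reading off the diagrammatic definition of $h_l$ in Figure~\ref{leftgens} (reflected), the composite pairs the first two tensor factors $e_{i_1}\otimes e_{i_2}$ via $\alpha$, while the remaining factors $e_{i_3},\ldots,e_{i_l}$ are carried through the zig-zags and finally a cap $\beta$ (for $h_l$) or a curtain $\lambda$ (for $h'_l$) is inserted at the bottom. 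So first I would unwind the composition explicitly into $\alpha$ applied to $e_{i_1}\otimes e_{i_2}$, a product of scalars coming from the $\zeta$'s acting on $e_{i_3},\ldots,e_{i_l}$, and the output $\beta(1)=y(e_1\otimes e_2 - e_2\otimes e_1)$ or $\lambda_\ell(1)$.

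The scalar bookkeeping is the only real content. Using Remark~\ref{rem:maps}, the cup contributes $\alpha(e_{i_1}\otimes e_{i_2}) = (-1)^{i_1}(1-\delta_{i_1 i_2})y^{i_2-i_1}$, which accounts for the factor $(-1)^{i_1}(1-\delta_{i_1,i_2})y^{i_2-i_1}$ in $\tau$. The remaining strands $e_{i_3},\ldots,e_{i_l}$ each pass through a $\zeta$, and $\zeta(e_i)=y^{2i-2}e_i$, so the total scalar picked up is $\prod_{t=3}^{l} y^{2i_t-2} = y^{2\sum_{t=3}^l i_t}\, y^{-2(l-2)}$. This is where I would expect a sign or power-of-$y$ discrepancy to surface: the stated coefficient is $y^{2\sum_{t=3}^l i_t}\,y^{2(l-2)}\,y$, so I would need to check carefully how many $\zeta$'s (versus $\xi$'s) actually appear, and whether the diagrammatic convention flips $y^{-2(l-2)}$ to $y^{2(l-2)}$; the extra overall factor of $y$ is the one coming from $\beta(1)=y(\cdots)$ or from $\lambda_\ell(1)=y(e_2-e_1)$. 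Since Lemma~\ref{lem:loop}(2) gives $\xi\zeta=\zeta\xi=y^2\mathbbm 1$, any mismatch between the left-generator power $y^{4(k-2)-2\sum i_j}$ and the right-generator power $y^{2\sum i_t+2(l-2)}$ should be reconcilable through these zig-zag identities once the correct map ($\xi$ or $\zeta$) is assigned to each strand.

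The difference between $h_l$ and $h'_l$ is purely the terminal map: $h_l$ closes with the cap $\beta$, producing the factor $e_1\otimes e_2 - e_2\otimes e_1$, whereas $h'_l$ closes with the right curtain map, producing $e_1 - e_2$ (recall $\lambda_r = -\lambda_\ell$ and $\lambda_\ell(1)=y(e_2-e_1)$, so the output vector is $e_1-e_2$ up to the sign already absorbed into $\tau$). Both share the same scalar $\tau$ because the pairing and zig-zag part of the composite is identical; only the copairing/curtain at the end differs, and these differ only in their output vector, not in any scalar. Thus once the scalar computation for $h_l$ is settled, the formula for $h'_l$ follows immediately by replacing $\beta(1)=y(e_1\otimes e_2-e_2\otimes e_1)$ with $\lambda_\ell(1)=y(e_2-e_1)$ and reading off $e_1-e_2$.

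The main obstacle, then, is not conceptual but purely a matter of getting the exponent of $y$ and the overall sign exactly right from the reflected diagram — in particular confirming that the passing strands contribute $\zeta$ rather than $\xi$ (so that the power is $+2\sum i_t$ as stated) and tracking the constant $2(l-2)$ and the single extra $y$. Given the symmetry with Lemma~\ref{lem:generators}, I would present this as a direct computation and, like the authors, defer the routine verification, but I would want to double-check the sign of $\lambda_r$ versus $\lambda_\ell$ against Figure~\ref{lambdar} to ensure the output $e_1-e_2$ (rather than $e_2-e_1$) is correct.
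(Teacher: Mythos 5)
Your approach is the same as the paper's: the paper states this lemma with no independent argument (it is the ``similarly'' companion of Lemma~\ref{lem:generators}, whose proof is precisely the direct computation you describe), and your structural reading --- $\alpha$ applied to $e_{i_1}\otimes e_{i_2}$, one zig-zag per passing strand, and a terminal $\beta$ (for $h_l$) or right curtain $\lambda_r=(\mathbbm{1}\otimes\mu_r)\circ\beta$ (for $h'_l$) supplying the last factor $y$ and the output vector --- is correct, including the output $e_1-e_2$ for $h'_l$, with no hidden extra sign.

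The discrepancy you flagged, however, should not be left as a hedge, because it is genuine and resolves in your favor: the printed exponent $y^{2(l-2)}$ in $\tau$ is an error for $y^{-2(l-2)}$. Each passing strand contributes $\zeta(e_{i_t})=y^{2i_t-2}e_{i_t}$ (the zig-zag identity $(\beta\otimes\mathbbm{1})(\mathbbm{1}\otimes\alpha)=\zeta$ applied once per strand), so the total is $y^{2\sum_{t=3}^{l}i_t}\,y^{-2(l-2)}$ exactly as you computed; assigning $\xi$'s instead would give $y^{4(l-2)-2\sum_{t}i_t}$, so no choice of zig-zag maps, and no appeal to $\xi\zeta=y^2\mathbbm{1}$, can produce the printed $y^{+2(l-2)}$. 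The rest of the paper confirms your version: in the proof of Lemma~\ref{lem:d32} the $h'_3$-term is $(-1)^i(1-\delta_{ij})y^{j-i+2k-1}e_k\otimes(e_1-e_2)$, i.e. $\alpha(e_i\otimes e_j)\cdot y^{2k-2}\cdot y$, which is your formula at $l=3$ and differs from the printed $\tau(i,j,k)$ by $y^4$; the matrix of Lemma~\ref{lem:d4} (for instance the vanishing of $d_4(e_1\otimes e_2\otimes e_1\otimes e_1)$) and the computations in Appendix~\ref{apdx:higher}, which insert exactly one factor $\zeta(e_2)=y^2$ per passing strand, are likewise consistent only with $y^{-2(l-2)}$. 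So your proof should simply assert the corrected coefficient $\tau(i_1,\ldots,i_l)=(-1)^{i_1}(1-\delta_{i_1,i_2})y^{i_2-i_1}y^{2\sum_{t=3}^{l}i_t}y^{-2(l-2)}y$, establish it by the direct computation you set up (an easy induction on $l$ using the zig-zag identity), and record that the sign of that exponent in the stated lemma is a typo rather than something to be reconciled.
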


\section{Low-dimensional differentials and homology groups}\label{sec:lowdim}

 In this section we utilize the skein theoretic procedure described in Section~\ref{sec:KauffR},
 that is similar to the Kauffman bracket,  to simplify the differentials and compute Yang-Baxter homology groups in low dimensions, for the normalized  Yang-Baxter matrix $R$. We apply diagrammatic arguments in addition to appealing directly to Theorem~\ref{thm:diff} in order to better illustrate the procedure. 
 Let $X=(V, R)$ be as in Subsection~\ref{sec:R}.

\subsection{The first differential}

By definition the first differential is $d_1: V(=k \otimes V=V \otimes k) \rightarrow k $,
$d_1=- (\mu_\ell - \mu_r)=0$. 
Hence $H_0(X)=0$ and $Z_1(X)=V$.

\subsection{The second differential}

Since left and right coactions have opposite signs, it follows that the second differential is identically null, as the following lemma shows.

\begin{lemma}\label{lem:triviald2}
We have $d_2=0$.
\end{lemma}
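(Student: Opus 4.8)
The plan is to show $d_2 = 0$ by writing out the full differential $d_2$ explicitly from the general formula $d_n = \sum_{i=1}^n (-1)^i [\,d^\ell_{i,n} - d^r_{i,n}\,]$ and exploiting the trivial-action hypothesis together with the $\lambda_r = -\lambda_\ell$ identity already established in Section~\ref{sec:KauffR}. Since $n=2$, the sum runs over $i=1,2$, so I must assemble four curtain maps: $d^\ell_{1,2}, d^\ell_{2,2}, d^r_{1,2}, d^r_{2,2}$, each a morphism $M \otimes V^{\otimes 2} \otimes M \to M \otimes V \otimes M$. With $M=k$ and the trivial actions $\mu_\ell(a \otimes x) = a = \mu_r(x \otimes a)$, each of these maps collapses substantially, and I expect the left/right pairing to produce the curtain maps $\lambda_\ell$ and $\lambda_r$ of Figures~\ref{lambda} and~\ref{lambdar}.

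The key computation is as follows. First I would write $d_2 = -[d^\ell_{1,2} - d^r_{1,2}] + [d^\ell_{2,2} - d^r_{2,2}]$. Each term is built from a single application of $R = I + J$ composed with an action $\mu_\ell$ or $\mu_r$ on the appropriate side. Using the skein decomposition $R = I + \beta\alpha$ of Lemma~2.? and the trivial-action convention, the nontrivial content of each curtain map reduces to the contribution coming from $J = \beta\alpha$ after absorbing $\mu$. The central point is that applying the left action yields the left curtain map $\lambda_\ell$ (as depicted in Figure~\ref{lambda}), whereas applying the right action yields $\lambda_r$, and by the identity $\lambda_r = -\lambda_\ell$ established just before this section, the left-differential and right-differential contributions in $d_2 = d^\ell_2 - d^r_2$ cancel in pairs. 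Concretely, I would verify on each basis element $e_i \otimes e_j \in V^{\otimes 2}$ (identifying $M \otimes V^{\otimes 2} \otimes M \cong V^{\otimes 2}$ via the trivial module structure) that the value of $d^\ell_{i,2}$ equals the value of $d^r_{i,2}$ for each $i$, so that every bracketed difference $[\,d^\ell_{i,2} - d^r_{i,2}\,]$ vanishes identically.

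I expect the main obstacle to be purely bookkeeping: correctly matching the index conventions in the definitions of $d^\ell_{i,n}$ and $d^r_{i,n}$ at the boundary cases $i=1,2$, where the tensor factors $\mathbbm{1}^{i-2}$, $\mathbbm{1}^{i-3}$, $\mathbbm{1}^{n-i}$ may have negative or zero exponents and must be interpreted as empty tensor factors (so that the formula degenerates appropriately). In particular I would check carefully how the single crossing $R$ is positioned relative to the wall $M$ in the $n=2$ case, and confirm that after composing with the trivial action the only surviving piece is the one that factors through the cup $\alpha$ followed by either $\mu_\ell$ or $\mu_r$. Once the degenerate indices are pinned down, the symmetry between the left and right actions, combined with $\lambda_r = -\lambda_\ell$, immediately forces $d^\ell_2 = d^r_2$ and hence $d_2 = d^\ell_2 - d^r_2 = 0$. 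Alternatively, and perhaps more cleanly, I could invoke Theorem~\ref{thm:diff} directly at $n=2$: the index set $S(2)$ and its right-differential mirror produce matching left and right generators whose difference is annihilated by the sign convention, giving $d_2 = 0$ with no explicit matrix work.
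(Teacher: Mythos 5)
Your overall strategy (expand $d_2$ from the definition, use the skein $R=\mathbbm{1}+\beta\alpha$ with the trivial actions, and finish with $\lambda_r=-\lambda_\ell$) is in the same spirit as the paper's proof, but the concrete cancellation mechanism you propose is wrong and would fail if carried out. You claim that each bracket $[\,d^\ell_{i,2}-d^r_{i,2}\,]$ vanishes identically, i.e.\ that $d^\ell_{i,2}=d^r_{i,2}$ for $i=1,2$. This is false already for $i=1$: with the trivial actions, $d^\ell_{1,2}$ deletes the first tensor factor while $d^r_{1,2}$ deletes the last, so $[\,d^\ell_{1,2}-d^r_{1,2}\,](e_1\otimes e_2)=e_2-e_1\neq 0$. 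It also fails for the terms containing a crossing: the skein produces a cup--cap remainder $\lambda_\ell\circ\alpha$ on the left and $\lambda_r\circ\alpha$ on the right, and since $\lambda_r=-\lambda_\ell$ these remainders are \emph{negatives} of each other, not equal. So the identity $\lambda_r=-\lambda_\ell$ cannot ``force $d^\ell_2=d^r_2$''; it forces the opposite relation, and if the two remainders really met inside one bracket separated by a minus sign they would add up to $2\,\lambda_\ell\circ\alpha\neq 0$ instead of cancelling.

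The correct cancellation, which is what the paper's proof (Figure~\ref{diff2}, or the one-line alternative $d_2=g_2'+h_2'=0$ via Theorem~\ref{thm:diff} and Lemma~\ref{lem:generators}) actually exploits, is cross-term rather than bracket-by-bracket: the deletion (identity-skein) parts cancel \emph{within each family} between the $i=1$ and $i=2$ terms because of the alternating sign $(-1)^i$, and what survives is exactly one remainder from the left family, $g_2'=\lambda_\ell\circ\alpha$, and one from the right family, $h_2'=\lambda_r\circ\alpha$, entering $d_2$ with the \emph{same} sign; only then does $\lambda_\ell+\lambda_r=0$ give $d_2=0$. For this sign pattern to come out one must index the right curtain maps so that the $i$-th term removes the $i$-th strand counted from the left, as in Figure~\ref{curtain} and as used in Lemma~\ref{lem:d31} and Theorem~\ref{thm:diff}; this is precisely the bookkeeping issue you flagged, and resolving it the wrong way (as your pairing of $d^\ell_{i,2}$ with $d^r_{i,2}$ implicitly does) flips a sign and yields a nonzero answer. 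Your fallback of invoking Theorem~\ref{thm:diff} at $n=2$ is essentially the paper's alternative proof, but there too the statement must be that $g_2'$ and $h_2'$ appear with the same sign (the even case of the theorem) and satisfy $g_2'=-h_2'$ by Lemma~\ref{lem:generators}: it is a sum that vanishes, not a ``difference annihilated by the sign convention.''
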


\begin{proof}
Diagrammatic computations are depicted in Figure~\ref{diff2} where
the relation $\lambda_r = - \lambda_\ell $ depicted in Figure~\ref{lambdar} is used at the last step. Alternatively, using Theorem~\ref{thm:diff} and Lemma~\ref{lem:generators} we have that $d_2 = g'_2 + h'_2 = 0$.
\end{proof}

It follows that $H_1(X)=V$ from $Z_1(X)=V$ as noted in the preceding subsection.

\begin{figure}[htb]
\begin{center}
\includegraphics[width=2.5in]{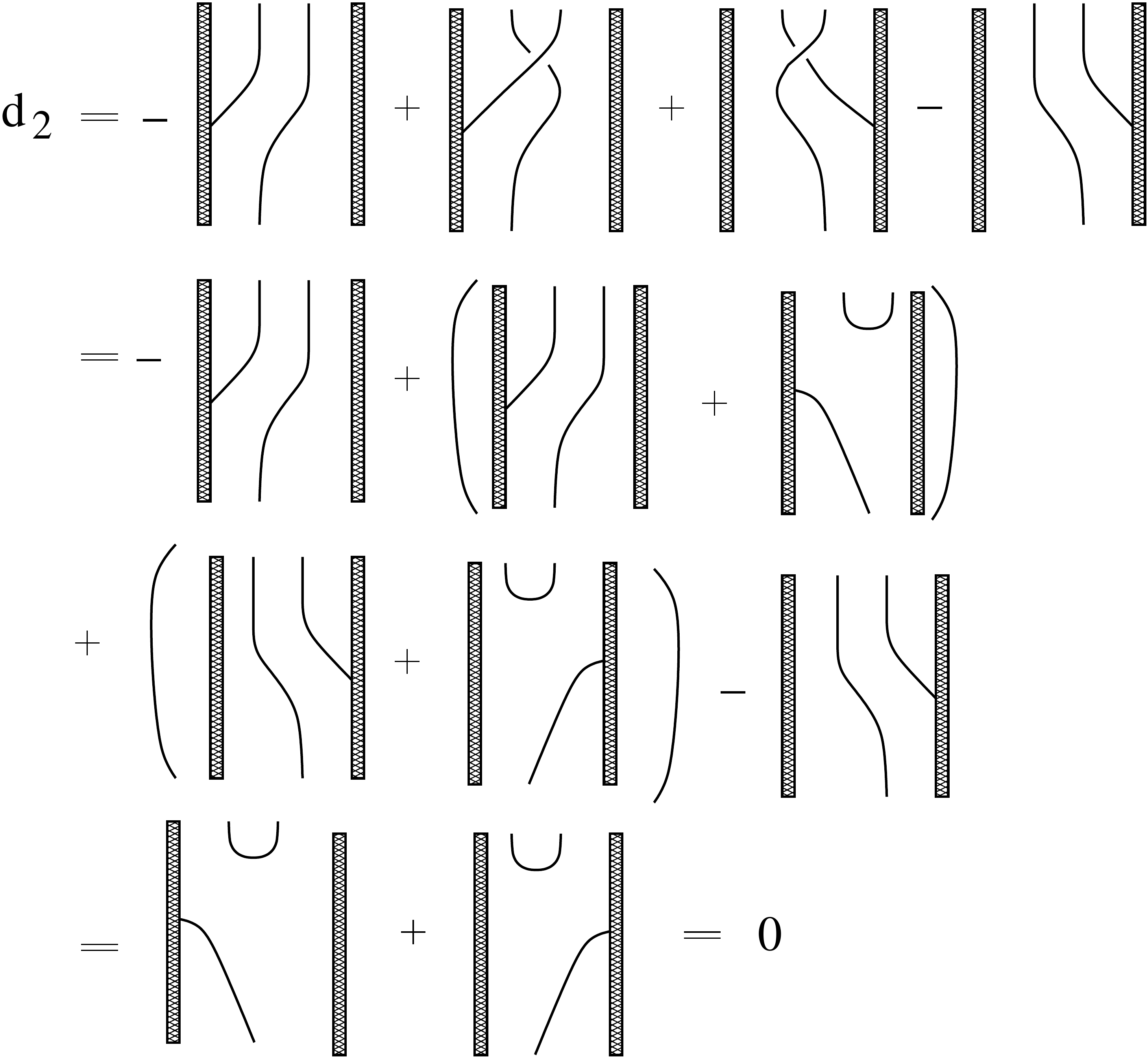}
\end{center}
\caption{Second differential}
\label{diff2}
\end{figure}

\subsection{The third differential}

We now proceed to computing the third differential. Again, we provide a direct diagrammatic interpretation although the next lemma easily follows from Theorem~\ref{thm:diff}. 

\begin{lemma} \label{lem:d31}
The left and right third differentials are given in terms of generators by 
\begin{eqnarray*}
d^\ell_{3} & = &  - g_1' g_1^2 - g_1' g_2 - g_3' , \\
d^r_{3} & = &    h_1^2h_1' +  h_2h_1'  + h_3' .
\end{eqnarray*}
\end{lemma}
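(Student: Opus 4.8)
The plan is to apply Theorem~\ref{thm:diff} directly in the special case $n=3$ and read off the two sums, then confirm the result matches the diagrammatic computation. First I would write out the index set $S(3)=\{(i_0,\ldots,i_h;k(1),\ldots,k(h))\mid i_h\neq 1,\ i_0+i_1^{k(1)}+\cdots+i_h^{k(h)}+2k=3\}$ explicitly. The constraint is that a sum of ``weighted'' indices together with an even contribution $2k$ equals $3$; since $2k$ is even and the total is the odd number $3$, the collection of generator indices must sum to an odd number. Enumerating the admissible tuples, I expect exactly three of them: the term with a single $g_3'$ (i.e.\ $i_0=3$, no further factors, $k=0$); the term $g_1'g_2$ (i.e.\ $i_0=1$, one factor $g_2$, $k=0$); and the term $g_1'g_1^2$ (i.e.\ $i_0=1$ together with $g_1^{2k}$ for $k=1$, so $2k=2$). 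These are precisely the three summands appearing in the claimed formula for $d^\ell_3$.

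Next I would substitute into the left-differential formula from Theorem~\ref{thm:diff},
\[
d^\ell_n = \sum_{S(n)}\ [\ g_{i_0}'\,g_{i_1}^{k(1)}\cdots g_{i_h}^{k(h)}\,g_1^{2k}\ ],
\]
which for $n=3$ gives $d^\ell_3 = g_1'g_1^2 + g_1'g_2 + g_3'$ as the sum over the three enumerated tuples. To obtain the signs and the right-differential expression I would then invoke the full odd-$n$ decomposition of $d_n$ in Theorem~\ref{thm:diff}: for odd $n$ the left part enters with an overall minus sign and the right part with a plus sign, mirrored via the $h$-generators. Reading off the $n=3$ case yields $d^\ell_3 = -(g_1'g_1^2 + g_1'g_2 + g_3')$ once the global sign convention of the theorem is incorporated, and $d^r_3 = h_1^2 h_1' + h_2 h_1' + h_3'$ from the symmetric right-hand sum over the same index set $S(3)$. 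I would take care to match the ordering convention in the statement, where the right-generator words are read in reversed order ($h_1^{2k}\cdots h_{i_0}'$), so that the three right summands come out as $h_1^2 h_1'$, $h_2 h_1'$, and $h_3'$.

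As an independent check, and because the lemma offers a ``direct diagrammatic interpretation,'' I would also verify the two expressions by expanding $d^\ell_3 = d^\ell_{1,3}-\text{(sign)}$ terms directly from the definition of $d^\ell_{i,3}$ in Section~\ref{sec:YBchain}, applying the skein relation $R=I+\beta\alpha$ to each crossing and collecting the surviving cup/cap concatenations against the diagrams in Figure~\ref{leftgens}. The main subtlety I anticipate is bookkeeping rather than conceptual: ensuring that the signs $(-1)^i$ from the definition of $d_n$, the parity-dependent global sign in Theorem~\ref{thm:diff}, and the reversed reading order for the $h$-words are all reconciled so that $d^\ell_3$ carries the overall minus sign while $d^r_3$ carries the plus sign exactly as stated. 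Once the index enumeration of $S(3)$ is pinned down and the sign conventions are aligned, the lemma follows immediately from Theorem~\ref{thm:diff}, with the diagrammatic expansion serving only as confirmation.
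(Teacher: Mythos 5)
Your proposal is correct, but it takes a different route from the paper's own written proof. The paper proves this lemma diagrammatically: it applies the skein relation $R = I + \beta\alpha$ to the crossings in the curtain diagrams for the third differential and collects the surviving cup/cap concatenations, as depicted in Figures~\ref{d3left} and~\ref{d3}; this is precisely what you relegate to an ``independent check'' at the end. Your primary route --- specializing Theorem~\ref{thm:diff} to $n=3$ --- is one the paper explicitly acknowledges just before the lemma (it says the lemma ``easily follows from Theorem~\ref{thm:diff}'') but does not carry out. Your enumeration of $S(3)$ is right: since the $g_1$-exponent $2k$ is forced to be even and the remaining indices must sum to the odd number $3$ with $i_h \neq 1$, exactly the three tuples giving $g_3'$, $g_1'g_2$ and $g_1'g_1^2$ survive, with mirrored words $h_3'$, $h_2h_1'$ and $h_1^2h_1'$ on the right. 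You also correctly identified the one genuine subtlety, namely that Theorem~\ref{thm:diff} states $d^\ell_n$ as an unsigned sum and attaches the overall minus to the left part of $d_n$ only in the odd-$n$ formula, whereas the lemma writes that minus into $d^\ell_3$ itself (consistent with $d_n = \sum_i (-1)^i[\,d^\ell_{i,n} - d^r_{i,n}\,]$ for odd $n$); your reconciliation lands on the stated signs. As for what each approach buys: yours is purely combinatorial once Theorem~\ref{thm:diff} (proved in Appendix~\ref{apdx:diff}) is granted, and it scales verbatim to $d_4$; the paper's diagrammatic computation is self-contained within the section, independent of the appendix, and serves the expository purpose of illustrating the skein machinery that the subsequent homology computations rely on.
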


\begin{proof}
Diagrammatic computations in 
Figure~\ref{d3left} give the left differential, where the left walls are abbreviated.  The right differential is similar.
Together we obtain $d_3$ as depicted in Figure~\ref{d3}. 
\end{proof}

\begin{figure}[htb]\label{fig:thirddiff}
\begin{center}
\includegraphics[width=2.5in]{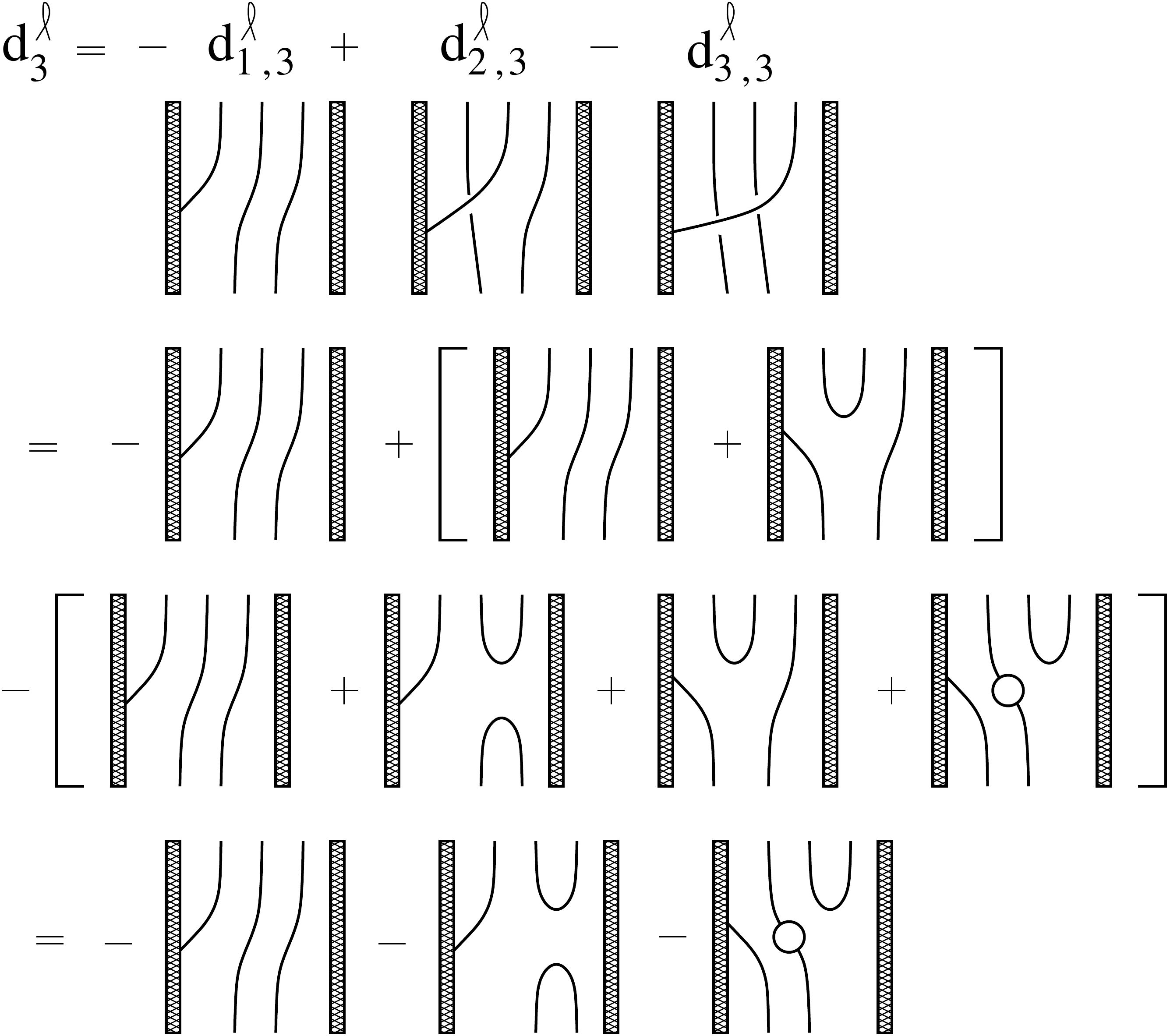}
\end{center}
\caption{Third left differential}
\label{d3left}
\end{figure}

\begin{figure}[htb]
\begin{center}
\includegraphics[width=4in]{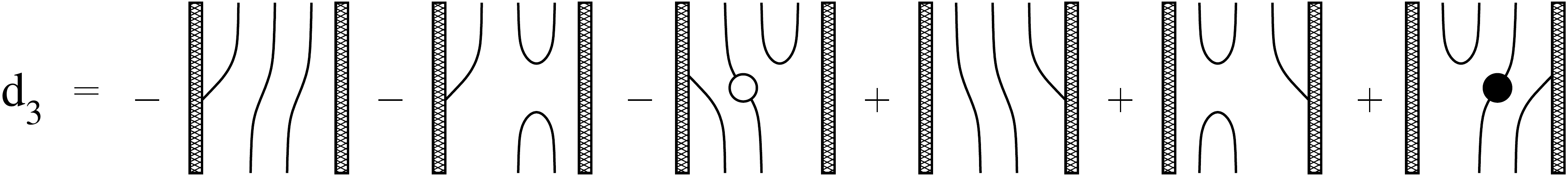}
\end{center}
\caption{Third  differential}
\label{d3}
\end{figure}

\begin{lemma}  \label{lem:d32}
The third differential is given on basis vectors by 
\begin{eqnarray*}
d_3( e_1 \otimes e_1 \otimes e_2 ) &=& (1-y^4) e_1 \otimes e_1  + (y^2 -1) e_1 \otimes e_2
+ y^2( y^2 - 1 ) e_2 \otimes e_1  , \\
d_3( e_1 \otimes e_2 \otimes e_2 ) &=& (1-y^2) e_1 \otimes e_2  + y^2(1-y^2 ) e_2 \otimes e_1
+ ( y^4 - 1 ) e_2 \otimes e_2  ,
\end{eqnarray*}
and $d_3( e_i \otimes e_j \otimes e_k )=0$ otherwise.
\end{lemma}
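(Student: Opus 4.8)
The plan is to compute $d_3$ on all eight basis vectors of $V^{\otimes 3}$ directly, using the decomposition from Lemma~\ref{lem:d31} together with the explicit formulas for the generators from Lemma~\ref{lem:generators} and Lemma~\ref{lem:rightgenerators}. Since $d_3 = d_3^\ell - d_3^r = -(g_1' g_1^2 + g_1' g_2 + g_3') - (h_1^2 h_1' + h_2 h_1' + h_3')$, and the $g_1 = h_1$ factors act as the identity, each summand reduces to evaluating one of the nontrivial generators $g_2, g_3', h_2, h_3'$ (and the degree-$1$ pieces $g_1'$, $h_1'$) on an appropriate tensor factor. The coefficient functions $\theta$ and $\tau$ govern which inputs give nonzero output: because of the factor $(1-\delta_{i_{k-1}i_k})$ in $\theta$ and the analogous $(1-\delta_{i_1 i_2})$ in $\tau$, a term vanishes unless the two basis indices being ``capped'' are distinct. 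This immediately explains why $d_3$ kills the four vectors $e_i\otimes e_j\otimes e_k$ in which the relevant adjacent indices coincide, and more generally pins down the six vectors on which at least one summand survives.

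First I would organize the computation by listing, for each of the eight inputs, which of the three left summands and which of the three right summands contribute. For instance, $g_3'$ acts on all three tensor factors and produces a term of the form $\theta(i_1,i_2,i_3)(e_2-e_1)$ times nothing (since the residual $\bigotimes_{t=1}^{1}e_{i_t}$ leaves $e_{i_1}$), so I would carefully track the leftover basis vector in each case; similarly $g_1' g_2$ caps the last two strands via $g_2$ and then applies $g_1'$ to what remains. I would then substitute the explicit values of $\theta$ and $\tau$ — in particular $\theta(i_1,\ldots,i_k)=(-1)^{i_{k-1}}(1-\delta_{i_{k-1}i_k})y^{i_k-i_{k-1}}y^{4(k-2)-2\sum i_j}y$ — collect the resulting scalar multiples of the surviving length-two basis tensors $e_a\otimes e_b$, and verify that the totals match the two displayed formulas, for example obtaining the coefficients $1-y^4$, $y^2-1$, and $y^2(y^2-1)$ for the input $e_1\otimes e_1\otimes e_2$.

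The main obstacle I anticipate is purely bookkeeping rather than conceptual: one must be scrupulous about the ordering conventions built into the generators (the $g$-maps cap off and read off residual strands from one side, the $h$-maps from the other, with the residual tensor $\bigotimes_{t=1}^{k-2}e_{i_t}$ versus $\bigotimes_{j=3}^{l}e_{i_j}$ appearing on opposite ends) and about the overall signs, including the leading minus sign on $d_3^\ell$ and the sign difference between the left and right differentials. Getting the exponents of $y$ right in each $\theta$ and $\tau$ evaluation, and confirming that the left and right contributions combine rather than cancel on the two surviving inputs while cancelling (or never appearing) on the other six, is where an error is most likely to creep in.

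An efficient cross-check, which I would carry out at the end, is to compare this generator-based computation against a direct matrix evaluation: since $R = I + J$ with $J=\beta\alpha$ fully explicit, and the maps $\mu_\ell,\mu_r$ are the trivial actions, one can in principle expand $d_3=\sum_{i=1}^3(-1)^i[d_{i,3}^\ell - d_{i,3}^r]$ by brute force on a single test vector such as $e_1\otimes e_1\otimes e_2$ and confirm agreement with the claimed output. Matching even one nonzero row this way gives strong confidence that the sign and ordering conventions have been applied consistently, after which the remaining inputs follow by the same routine substitution.
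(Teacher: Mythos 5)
Your overall strategy is exactly the paper's: expand $d_3$ via the generator decomposition of Lemma~\ref{lem:d31}, substitute the explicit coefficient functions $\theta$ and $\tau$ from Lemmas~\ref{lem:generators} and~\ref{lem:rightgenerators}, use the Kronecker-delta factors to see which summands vanish, and evaluate on basis vectors (the paper merely organizes the eight vectors into the cases $i=j=k$, $i=k\neq j$, $i=j\neq k$, $i\neq j=k$ rather than listing them one by one). However, your starting identity contains a genuine sign error that would make the entire computation fail. You write $d_3 = d_3^\ell - d_3^r = -(g_1'g_1^2 + g_1'g_2 + g_3') - (h_1^2h_1' + h_2h_1' + h_3')$. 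In Lemma~\ref{lem:d31} the alternating signs $(-1)^i$ are already absorbed into the displayed expressions, so the correct assembly --- the one consistent with Theorem~\ref{thm:diff} for odd $n$, and the one the paper's proof actually uses --- is
\[
d_3 = -\bigl(g_1'g_1^2 + g_1'g_2 + g_3'\bigr) + \bigl(h_1^2h_1' + h_2h_1' + h_3'\bigr),
\]
with \emph{opposite} signs on the left and right families. With your signs, the two identity-type terms add instead of cancelling: you would get $d_3(e_1\otimes e_1\otimes e_1) = -2\,e_1\otimes e_1 \neq 0$, contradicting the statement, and on $e_1\otimes e_1\otimes e_2$ the coefficient of $e_1\otimes e_1$ would come out as $-(1+y^4)$ instead of $1-y^4$. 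The cancellation on the six vanishing inputs, which you correctly anticipate must occur, is only possible with the plus sign on the $h$-terms. Your proposed brute-force cross-check would catch this, but as written the proof rests on a false identity, so the error is not merely hypothetical bookkeeping risk.

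A second trap in the ``routine substitution'' you should be aware of: the printed coefficient in Lemma~\ref{lem:rightgenerators} contains the factor $y^{2(l-2)}$, but consistency with $\zeta(e_i) = y^{2i-2}e_i$ (and with the statement being proved) requires $y^{-2(l-2)}$, i.e.
\[
\tau(i_1,\ldots,i_l) = (-1)^{i_1}(1-\delta_{i_1 i_2})\,y^{i_2-i_1}\,y^{2\sum_{t=3}^{l} i_t - 2(l-2)}\,y .
\]
The paper's own proof uses this corrected value: its $h_3'$ contribution carries the coefficient $y^{j-i+2k-1}$, not $y^{j-i+2k+3}$. If you substitute the printed $\tau$ verbatim, every $h_3'$ term is off by a factor of $y^4$ and the totals again fail to match; so the right-generator coefficients should be rederived from the diagrams (cap $\alpha$ on the first two strands, $\zeta$ on pass-through strands, then $\lambda_r$ or $\beta$) rather than taken on faith.
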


\begin{proof}
On basis vectors $e_i\otimes e_j\otimes e_k$, with $i,j,k = 1,2$, from Lemma~\ref{lem:generators} and Lemma~\ref{lem:d31} we have
\begin{eqnarray*}
	d_3 (e_i\otimes e_j\otimes e_k) &=& -e_j\otimes e_k + (-1)^{j+1} (1-\delta_{jk})y^{k-j+1}(e_1\otimes e_2 - e_2\otimes e_1)\\
	&& + (-1)^{j+1}(1-\delta_{jk})y^{k-j-2i+5}(e_2-e_1)\otimes e_i + e_i\otimes e_j\\
	&& + (-1)^i(1-\delta_{ij})y^{j-i+1}(e_1\otimes e_2-e_2\otimes e_1)\\
	&& + (-1)^i(1-\delta_{ij})y^{j-i+2k-1} e_k\otimes (e_1 - e_2) . 
 \end{eqnarray*}
When $i=j=k$, since $1-\delta_{ij} = 1-\delta_{jk} = 0$, we have 
$$
d_3 (e_i\otimes e_j\otimes e_k) = -e_i\otimes e_i + e_i\otimes e_i = 0. 
$$
Let us consider the case $i = k \neq j$. Since 
$i+j=3$, 
we have $y^{i-j+1} = y^{2i-2}$, $y^{j-i} = y^{3-2i}$ and $(-1)^i = (-1)^{j+1}$ 
so that 
\begin{eqnarray*}
	d_3 (e_i\otimes e_j\otimes e_i) &=& -e_j\otimes e_i + (-1)^iy^{2i-2}(e_1\otimes e_2-e_2\otimes e_1)\\
	&& + (-1)^iy^2(e_2\otimes e_i - e_1\otimes e_i) + e_i\otimes e_j \\
	&& + (-1)^iy^{4-2i}(e_1\otimes e_2 - e_2\otimes e_1) \\
	&& + (-1)^i y^2(e_i\otimes e_1 - e_i\otimes e_2).
	\end{eqnarray*}
 Distinguishing the two cases $i=1$ and $i=2$ we easily see that either way $d_3(e_i\otimes e_j\otimes e_i) = 0$. Finally, we consider $ i = j \neq k$ and $i\neq j = k$. 
 In the first case, since $y^{5-3i + k} = y^{8-4i} $ 
 and $y^{k-i+1} = y^{4-2i}$ we have
 \begin{eqnarray*}
 	d_3(e_i\otimes e_i\otimes e_k) &=& -e_i\otimes e_k + (-1)^{i+1} y^{4-2i}(e_1\otimes e_2 - e_2\otimes e_1) \\
 	&& + (-1)^{i+1}y^{8-4i}(e_2\otimes e_i - e_1\otimes e_i) + e_i\otimes e_i,
 	\end{eqnarray*}
 which is readily seen to be zero when $i = 2$ and equal to $(1-y^4) e_1 \otimes e_1  + (y^2 -1) e_1 \otimes e_2
 + y^2( y^2 - 1 ) e_2 \otimes e_1$ when $i = 1$. Similarly, $i\neq j = k$ gives zero when $k = 1$ and $(1-y^2) e_1 \otimes e_2  + y^2(1-y^2 ) e_2 \otimes e_1
 + ( y^4 - 1 ) e_2 \otimes e_2$ when $k = 2$. This concludes the proof of the lemma. 
\end{proof}

We now compute the second homology group. 
\begin{theorem}\label{thm:secondhomology}
We have
$H_2(X)= k^2 \oplus k / (y^2 -1 ) \oplus k / (y^4 -1) $.
\end{theorem}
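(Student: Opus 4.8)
The plan is to compute $H_2(X) = Z_2(X)/B_2(X)$ directly from the differentials already assembled in this section. I would first record that by Lemma~\ref{lem:triviald2} the differential $d_2$ is identically zero, so every $2$-chain is a cycle: $Z_2(X) = k^{\otimes 2} \otimes V^{\otimes 2} \otimes k^{\otimes 2} \cong V^{\otimes 2}$, a free $k$-module of rank $4$ with basis $e_i \otimes e_j$. Thus $H_2(X) = V^{\otimes 2}/B_2(X)$, where $B_2(X) = \operatorname{im}(d_3)$ is the image of the third differential.

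The heart of the computation is to read off $\operatorname{im}(d_3)$ from Lemma~\ref{lem:d32}. That lemma shows $d_3$ vanishes on all basis vectors except $e_1\otimes e_1\otimes e_2$ and $e_1\otimes e_2\otimes e_2$, whose images span $B_2(X)$. So I would form the $4 \times 2$ matrix whose columns are these two image vectors expressed in the ordered basis $(e_1\otimes e_1,\, e_1\otimes e_2,\, e_2\otimes e_1,\, e_2\otimes e_2)$, namely the columns
\begin{eqnarray*}
(1-y^4,\ y^2-1,\ y^2(y^2-1),\ 0)^T \quad\text{and}\quad (0,\ 1-y^2,\ y^2(1-y^2),\ y^4-1)^T.
\end{eqnarray*}
The quotient $H_2(X)$ is then the cokernel of this map $V^{\otimes 2}\to V^{\otimes 2}$, so the remaining task is purely the Smith normal form over the PID-like ring $k = \Q[y,y^{-1}]$ (a localization of a PID, hence the structure theorem for the cokernel still applies).

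To extract the Smith normal form I would factor out the obvious common factors: each generator is divisible by $(1-y^2)$, so I would write the first column as $(1-y^2)(1+y^2,\, -1,\, -y^2,\, 0)^T$ and the second as $(1-y^2)(0,\, 1,\, y^2,\, -(1+y^2))^T$, which makes the invariant factors transparent. Performing column and row operations on the reduced $4\times 2$ integer-polynomial matrix, I expect the two nonzero invariant factors to come out as $(y^2-1)$ and $(y^4-1)$ (up to units in $k$), with two zero invariant factors accounting for the free rank. This yields $\operatorname{coker} \cong k^2 \oplus k/(y^2-1) \oplus k/(y^4-1)$, matching the claim. The main obstacle is purely the careful bookkeeping of the row/column reduction: I must track that one generator contributes the factor $(y^2-1)$ while the interaction of the two columns (their determinantal $2\times 2$ minors, whose gcd gives the second invariant factor) produces exactly $(y^4-1)=(y^2-1)(y^2+1)$ and not some smaller or spurious associate. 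Computing the $2\times 2$ minors of the matrix and taking their gcd is the clean way to pin down both invariant factors and guard against arithmetic slips.
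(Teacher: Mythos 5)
Your proposal is correct and follows essentially the same route as the paper: both use $d_2=0$ (Lemma~\ref{lem:triviald2}) to identify $Z_2(X)\cong V^{\otimes 2}$, form the $4\times 2$ matrix of $d_3$ from Lemma~\ref{lem:d32}, and read off the cokernel from its Smith normal form, whose invariant factors are $(y^2-1)$ and $(y^4-1)$. The only difference is cosmetic: the paper performs explicit column and row operations, while you extract the invariant factors via the gcd of entries and of $2\times 2$ minors, which is an equivalent computation of the same normal form.
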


\begin{proof}
From Lemma~\ref{lem:d32},
in matrix form with columns for $e_1 \otimes e_1 \otimes e_2$ and $e_1 \otimes e_2 \otimes e_2$,
and rows for $e_1 \otimes e_1 $, $e_1 \otimes e_2 $, $e_2 \otimes e_1 $, $e_2 \otimes e_2 $
in these orders, the  matrix below.
$$
\left[ \begin{array}{rr} 1 - y^4 & 0 \\
y^2 -1 & 1 - y^2 \\
y^4 - y^2 & y^2 - y^4 \\
0 & y^4 -1 
\end{array}
\right]
$$
Then 
changes of bases are performed as follows.
$$
\left[ \begin{array}{rr} 1 - y^4 & 1 - y^4 \\
y^2 -1 & 0 \\
y^4 - y^2 & 0 \\
0 & y^4 -1 
\end{array}
\right]
\left[ \begin{array}{rr} 1 - y^4 & 0 \\
y^2 -1 & 0 \\
0 & 0 \\
0 & y^4 -1 
\end{array}
\right]
\left[ \begin{array}{rr} 0 & 0 \\
y^2 -1 & 0 \\
0 & 0 \\
0 & y^4 -1 
\end{array}
\right]
$$
The right-most matrix represents the group as stated.
\end{proof}

\subsection{The fourth differential}
Next we compute the fourth differential. As before we have the following.

\begin{lemma}
The left and right fourth differentials are given in terms of generators by 
\begin{eqnarray*}
d^\ell_{4} & = &   g_1' g_1 g_2 + g_1' g_3 + g_2' g_1^2 + g_2' g_2 + g_3' , \\
d^r_{4} & = &   h_2 h_1 h_1' + h_3 h_1' +  h_1^2h_2' +  h_2h_2'  + h_3' .
\end{eqnarray*}
\end{lemma}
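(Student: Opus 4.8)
The statement to prove is the computation of $d^\ell_4$ and $d^r_4$ in terms of the left and right generators, exactly analogous to the third-differential computation in Lemma~\ref{lem:d31}.

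\bigskip

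The plan is to apply Theorem~\ref{thm:diff} directly, since the fourth differential is precisely the $n=4$ instance of the general skein-theoretic decomposition established there. First I would enumerate the index set $S(4)$, that is, all tuples $(i_0,\ldots,i_h;k(1),\ldots,k(h))$ subject to the constraints $i_h\neq 1$ and $i_0 + i_1^{k(1)} + \cdots + i_h^{k(h)} + 2k = 4$, recalling from the remark following Theorem~\ref{thm:diff} that the exponent $2k$ on the trailing identity factor $g_1$ is even. Working through the compositions $\deg 4$, the admissible words are $g_1' g_1 g_2$, $g_1' g_3$, $g_2' g_1^2$, $g_2' g_2$, and $g_3'$; since $n=4$ is even, Theorem~\ref{thm:diff} assigns these the positive sign, giving the asserted formula for $d^\ell_4$. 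The right differential $d^r_4$ is read off from the same theorem by reversing the horizontal concatenation order and replacing each $g$ by the corresponding $h$, producing $h_2 h_1 h_1' + h_3 h_1' + h_1^2 h_2' + h_2 h_2' + h_3'$.

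\bigskip

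I would present the argument either as this bookkeeping exercise over $S(4)$, or, following the style used for $d_3$ in Lemma~\ref{lem:d31}, via the diagrammatic expansion: one expands each crossing in $d^\ell_{i,4}$ using the skein relation $R = I + \beta\alpha$, collects the resulting horizontal concatenations of cups, caps, and zig-zag maps, and identifies them with the generators $g_k, g'_k$ of Figure~\ref{leftgens}. The left wall maps are absorbed into the primed generators exactly as in the $n=3$ case, and the identity contributions furnish the even powers of $g_1$. The right differential follows by the mirror-image argument, using $\lambda_r = -\lambda_\ell$ where signs must be tracked.

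\bigskip

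The main obstacle is purely organizational rather than conceptual: one must verify that the enumeration of $S(4)$ is complete and that no admissible word is double-counted or omitted, in particular confirming that terms such as $g_1' g_2 g_1$ (with an odd trailing identity exponent) are correctly \emph{excluded} by the parity constraint on $2k$. Since the deeper content is already contained in Theorem~\ref{thm:diff}, the proof amounts to checking these constraints and fixing the overall even-$n$ sign; I would therefore keep the exposition brief, stating that the result is the specialization of Theorem~\ref{thm:diff} to $n=4$ with the diagrammatic verification left to the reader, as is done for the analogous lemmas above.
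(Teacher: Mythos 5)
Your overall route is exactly the paper's: the paper states this lemma without a separate proof, treating it (as it did for $d_3$ in Lemma~\ref{lem:d31}) as the $n=4$ specialization of Theorem~\ref{thm:diff}, or equivalently as a direct diagrammatic skein expansion; the proof of Lemma~\ref{lem:d4} later says precisely this. So there is no methodological divergence to discuss.

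There is, however, a concrete failure in the verification you claim to have carried out. The enumeration of $S(4)$ cannot produce the terms $g_3'$ and $h_3'$. The constraint $i_0 + i_1^{k(1)} + \cdots + i_h^{k(h)} + 2k = 4$ forces the word consisting of a primed generator alone to be $g_4'$; indeed, by Lemma~\ref{lem:generators} the map $g_3'$ consumes three strands (it is a map $V^{\otimes 3}\to V^{\otimes 2}$), so it cannot even be a summand of $d_4^\ell : V^{\otimes 4}\to V^{\otimes 3}$, and the alternative $g_3' g_1$ is excluded by the even-exponent condition on the trailing identities that you yourself invoke to rule out $g_1' g_2 g_1$. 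What is actually going on is that the paper's statement carries a typo: the last terms should read $g_4'$ and $h_4'$. This is confirmed by the paper's own proof of Lemma~\ref{lem:d4}, where the corresponding summands are $\theta(i,j,k,l)\,(e_2-e_1)\otimes e_i\otimes e_j$ and $\tau(i,j,k,l)\, e_k\otimes e_l\otimes(e_1-e_2)$, i.e., the images under $g_4'$ and $h_4'$ with the four-index coefficients of Lemmas~\ref{lem:generators} and~\ref{lem:rightgenerators}, and by the proof of Proposition~\ref{lem:bj}, where $g_n'$ (full degree $n$) is listed among the maps occurring in $d_n$. Since the entire content of your proposal is the bookkeeping over $S(4)$, asserting that this bookkeeping yields $g_3'$ is a step that fails: a correctly executed enumeration would have flagged and corrected this term rather than ``confirmed'' it.
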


The diagrammatic representation of $d_4$ is found in Figure~\ref{d4}.

\begin{figure}[htb]
	\begin{center}
		\includegraphics[width=4in]{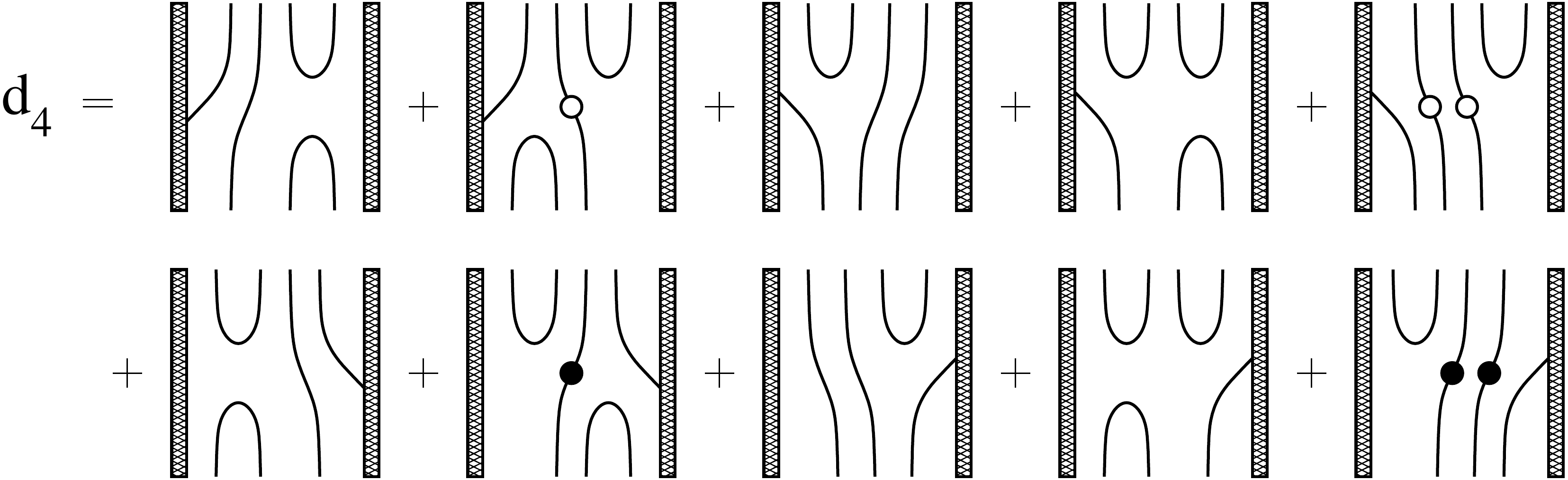}
	\end{center}
	\caption{The fourth differential}
	\label{d4}
\end{figure}

\begin{lemma}\label{lem:d4}
	The matrix form of $d_4$, with respect to the bases of $V^{\otimes 3}$ and $V^{\otimes 4}$ in lexicographic order with respect to the indices, is given by 
	$$
	\left[ \begin{array}{cccccccc} 
	0 & 0 & 0 & 0   & 0 &&&\\
	y^6-y^2&&y^2-y^4&&y^4-y^6&&&\\
	1-y^4&&y^2 -1&& y^4 -y^2&&&\\
	&& 0 && 0 &&&\\
	&& 0 && 0 &&&\\
	&&y^4 -y^2&&y^2-y^4&y^4-y^2&y^2-y^4&\\
	&&1-y^2&&y^2-y^4& 0 &y^4 -1&\\
	&&&&&y^2-y^4&y^4-y^6&y^6-y^2\\
	&&&&& 0 & 0 &\\
	&&&&y^4 -1&1-y^2&y^2-y^4&\\
	&&&&& 0 & 0 &\\
	&&&&&y^2-1&y^4-y^2&1-y^4\\
	&&&&& 0 &&\\
	&&&&& 0 &&\\
	&&&&& 0 &&\\
	&&&&& 0 &&
	\end{array}
	\right] 
	$$
	\end{lemma}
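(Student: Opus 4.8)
The plan is to deduce the matrix from the generator decomposition of $d_4$ together with the explicit formulas of Lemmas~\ref{lem:generators} and~\ref{lem:rightgenerators}, exactly as Lemma~\ref{lem:d32} was obtained in degree three. Writing $d_4=d_4^\ell+d_4^r$ with $d_4^\ell,d_4^r$ as decomposed just above, I read each concatenation as the horizontal tensor product of its factors acting on consecutive strands: $g_1=h_1=\mathbbm{1}$ is the identity on one strand, $g_1'$ and $h_1'$ are the left and right walls, which by triviality of the action send their strand to the scalar $1$, and the remaining nontrivial factors act on their strands according to Lemma~\ref{lem:generators} (resp. Lemma~\ref{lem:rightgenerators}). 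The one translation subtlety is the full-width summand: to have codomain $V^{\otimes 3}$ and to match the pattern of the top term $g_n'$ (resp. $h_n'$) appearing for $n=2,3$, it must be read on all four strands, i.e. as $g_4'$ (resp. $h_4'$) rather than as a three-strand generator; I would confirm this reading against the diagram in Figure~\ref{d4} before computing.

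With this dictionary fixed, I would first evaluate each of the ten summands on a general basis vector $e_i\otimes e_j\otimes e_k\otimes e_l$, keeping the coefficients $\theta$, $\tau$ and the Kronecker factors symbolic. This immediately exposes the vanishing: a summand contributes only when the pair of strands it caps carries distinct indices, so that many of the sixteen inputs are annihilated by every term and produce the zero rows of the matrix; only the seven inputs corresponding to the nonzero rows survive. For each surviving input I would then superimpose the left and right contributions, expand the result in the lexicographically ordered basis of $V^{\otimes 3}$ (the eight columns), and record the coefficients as the corresponding row of the displayed $16\times 8$ matrix. To halve the labor I would exploit the left/right mirror symmetry, which interchanges the $g$- and $h$-families and uses $\lambda_r=-\lambda_\ell$, so that the $d_4^r$-contributions are obtained from the $d_4^\ell$-contributions by reversing the tensor order; the substitution $e_1\leftrightarrow e_2$ provides a further consistency relation between rows such as $1112$ and $2122$.

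The genuine obstacle is not any individual evaluation but the aggregate sign-and-exponent bookkeeping. Each summand carries a sign $(-1)^{(\cdot)}$ and a product of powers of $y$ coming from the $\xi$- and $\zeta$-rescalings of the strands passing behind a cup and from the cup/cap normalizations, and when five left and five right terms are superimposed on the same input it is easy to be off by a factor $y^{\pm 2}$ or an overall sign. I would control this in two ways: by recomputing a few entries directly from the diagrammatic definition in Figure~\ref{d4}, and by checking that the resulting matrix satisfies $d_3 d_4=0$. Since $d^2=0$ holds, the image of each basis vector of $V^{\otimes 4}$ under $d_4$, that is, each row of the displayed matrix viewed as a vector in $V^{\otimes 3}$, must lie in the kernel of the degree-three differential of Lemma~\ref{lem:d32}; imposing this constraint pins down the surviving entries and catches any transcription error in the powers of $y$.
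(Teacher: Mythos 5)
Your proposal is correct and is essentially the paper's own proof: the paper likewise expands $d_4=d_4^\ell+d_4^r$ into its ten generator summands (whose full-width terms are indeed $g_4'$ and $h_4'$, confirming your reading of the typo), evaluates them symbolically on $e_i\otimes e_j\otimes e_k\otimes e_l$ via Lemmas~\ref{lem:generators} and~\ref{lem:rightgenerators}, and assembles the matrix, supplemented by diagram-aided spot checks of individual coefficients. One caution on your labor-saving device: the left/right mirror symmetry holds only when tensor-order reversal is combined with the relabeling $e_1\leftrightarrow e_2$ (so row $1112$ pairs with $1222$, and $1121$ with $2122$, not $1112$ with $2122$); reversal alone produces exactly the $y^{\pm 2}$ discrepancies you warn about, and since a row rescaled by $y^{2}$ still lies in $\ker d_3$, the $d_3d_4=0$ test alone cannot certify every entry, so the direct evaluation you describe first should remain the primary computation.
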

	For exposition the matrix is transposed, so that the eight columns correspond to 
	$e_i \otimes e_j \otimes e_k $ and  the rows correspond to 
	$e_i \otimes e_j \otimes e_k \otimes e_\ell $.
	For example, the second row represents that 
	\begin{eqnarray*}
\lefteqn{	d_4( e_1 \otimes e_1 \otimes e_1 \otimes e_2)}\\
&=&
(y^6-y^2) e_1 \otimes e_1 \otimes e_1 
	+ ( y^2-y^4) e_1 \otimes e_2 \otimes e_1 
	+ (y^4-y^6)e_2 \otimes e_1 \otimes e_1 . 
	\end{eqnarray*}
	Blank entries represent zeros, though some zeros are given to clarify the positions of entries.
\begin{proof}
	Either by direct diagrammatic manipulation using the skein relation, or using Theorem~\ref{thm:diff}, it follows that $d_4$ is represented diagrammatically as in Figure~\ref{d4}. Using Lemma~\ref{lem:generators} we compute $d_4$ on basis vectors $e_i\otimes e_j\otimes e_k\otimes e_l$ with $i,j,k,l = 1,2$. We have
	\begin{eqnarray*}
	\lefteqn{d_4(e_i\otimes e_j\otimes e_k\otimes e_l)}\\
	&=& \theta(k,l)e_j\otimes (\sym) + \theta(j,k,l) (\sym)\otimes e_j \\
	&& 
	+ \theta(i,j)(\lwall)\otimes e_k\otimes e_l \\
	&&
	 + \theta(i,j)\theta(k,l) (\lwall)\otimes (\sym)\\
	&& + \theta(i,j,k,l)(\lwall)\otimes e_i\otimes e_j \\
	& & + \tau(i,j)(\sym)\otimes e_k\\
	&& + \tau(i,j,k)e_k\otimes (\sym) +  \tau(k,l)e_i\otimes e_j\otimes (\rwall) \\
	&& + \tau(i,j)\tau(k,l) (\sym)\otimes (\rwall) \\
	&& + \tau(i,j,k,l) e_k\otimes e_l\otimes (\rwall).  
	\end{eqnarray*}
The differential can be computed directly from this formula.

We illustrate alternative computations aided by diagrams.
For computing the coefficient of $e_1 \otimes e_1 \otimes e_1$, we observe that the only contributions are given by four maps whose diagrams are depicted in Figure~\ref{d4example}.
For example, the term (2) in Figure~\ref{d4example} represents the map $\lambda_\ell \cdot \xi \cdot \xi \cdot \alpha$, and it is seen from the diagram that the only terms that give non-zero coefficients for 
$e_1 \otimes e_1 \otimes e_1$ are $e_1 \otimes e_1 \otimes e_1\otimes e_2$ and 
$e_1 \otimes e_1 \otimes e_2\otimes e_1$. 
The value for the former is computed as 
$(\lambda_\ell \cdot \xi \cdot \xi \cdot \alpha ) ( e_1 \otimes e_1 \otimes e_1\otimes e_2)=
(-y) \cdot y^2 \cdot y^2 \cdot (-y)$. All the other terms are computed similarly using diagrams.
\end{proof} 

\begin{figure}[htb]
	\begin{center}
		\includegraphics[width=3.5in]{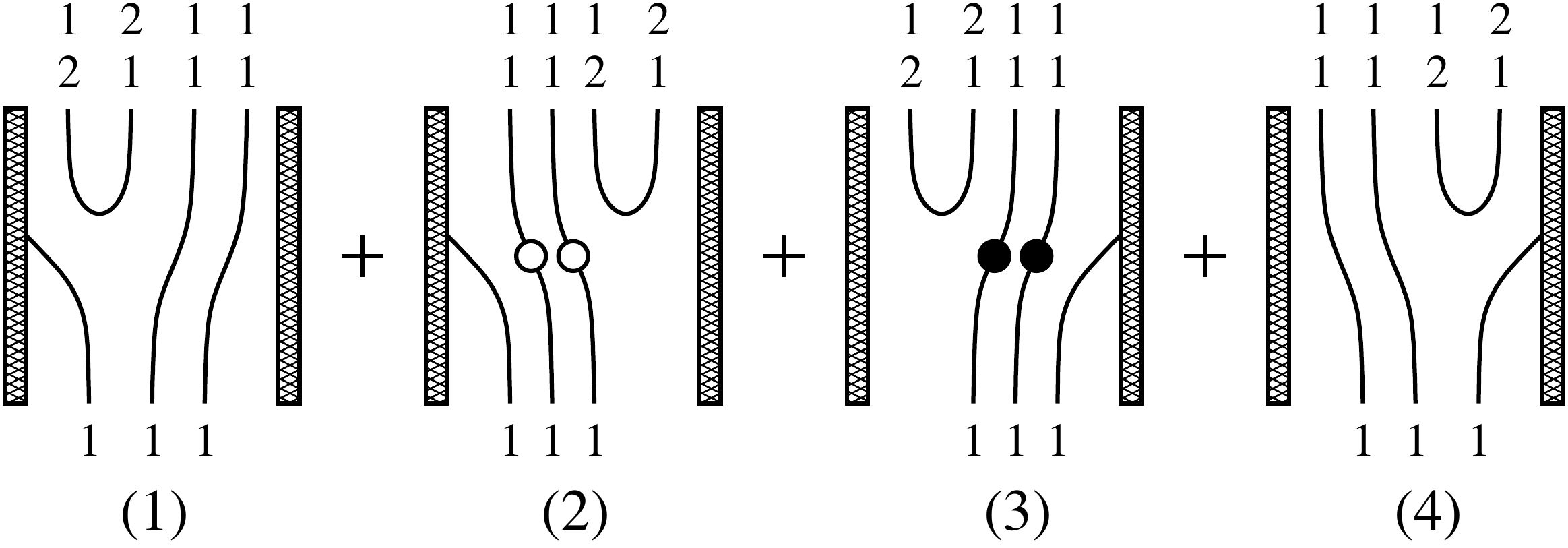}
	\end{center}
	\caption{Sample computations of the fourth differential}
	\label{d4example}
\end{figure}

We can now compute the third homology group of the Yang-Baxter operator $R$.
\begin{theorem}\label{thm:thirdhomology}
	Setting $k := \mathbb{Q}[y,y^{-1}]$ we have $$H_3(X)= k^{\oplus 2}\oplus k/(1-y^2)^{\oplus 2}\oplus k/(1-y^4)^{\oplus 2}.$$
\end{theorem}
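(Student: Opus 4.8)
The plan is to compute $H_3(X)=Z_3(X)/B_3(X)$ directly from the two matrices already in hand, working over the principal ideal domain $k=\Q[y,y^{-1}]$, where $Z_3(X)=\ker d_3$ and $B_3(X)=\mathrm{im}\,d_4$. First I would pin down the cycle module. By Lemma~\ref{lem:d32} the map $d_3$ annihilates every basis vector of $V^{\otimes 3}$ except $e_1\otimes e_1\otimes e_2$ and $e_1\otimes e_2\otimes e_2$, and the two images $d_3(e_1\otimes e_1\otimes e_2)$ and $d_3(e_1\otimes e_2\otimes e_2)$ are linearly independent over the fraction field $\Q(y)$ (the first carries the $e_1\otimes e_1$ term, the second the $e_2\otimes e_2$ term). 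Since $k$ is a domain, the vanishing of $d_3$ on a cycle forces the two corresponding coefficients to be zero, so $Z_3(X)$ is the free rank-$6$ submodule spanned by the remaining six basis vectors, and it is a direct summand of $V^{\otimes 3}$. Consequently $H_3(X)=\mathrm{coker}(\bar d_4)$, where $\bar d_4\colon V^{\otimes 4}\to Z_3(X)\cong k^6$ is the corestriction of $d_4$; concretely $\bar d_4$ is represented by the matrix of Lemma~\ref{lem:d4} with its two identically zero columns (those indexed by $e_1\otimes e_1\otimes e_2$ and $e_1\otimes e_2\otimes e_2$) deleted.

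Next I would simplify the resulting $6\times 16$ matrix. Its image is spanned by the seven nonzero columns, i.e. the seven nonzero rows of the transposed display. Inspecting them, each such generator is a unit (a power of $y$) times $(1-y^2)$ times an explicit vector $w_i$ whose entries lie in $\{\pm1,\pm y^2,\pm(1+y^2)\}$; hence $B_3(X)=(1-y^2)M$, where $M\subset k^6$ is generated by the $w_i$. The crucial simplifications are that the two generators supported on the first three coordinates coincide up to a unit, the two supported on the last three coordinates likewise coincide up to a unit, and the remaining middle generator is the negative of the sum of the other two (after factoring out $(1-y^2)$, one checks $w_{10}=-(w_6+w_7)$). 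After removing these redundancies, $M$ is free of rank $4$, generated by four explicit vectors.

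The main step, and essentially the only real computation, is to put the inclusion $M\hookrightarrow k^6$ into Smith normal form, where I expect invariant factors $1,\,1,\,1+y^2,\,1+y^2$. The first two factors are units because the reduced $4\times 6$ matrix has $\pm1$ entries and a $2\times 2$ minor equal to $1$, so the gcd of its $1\times1$ and $2\times2$ minors is a unit. To see that the last two factors equal $1+y^2$, I would reduce modulo $(1+y^2)$ (setting $y^2\equiv -1$): the four rows then span only a $2$-dimensional space over the field $k/(1+y^2)$, so the rank drops by $2$, which forces $(1+y^2)$ to divide both remaining invariant factors. Exhibiting a $3\times3$ minor equal to $1+y^2$ and a $4\times4$ minor equal to $(1+y^2)^2$ then pins the gcds of the $3\times3$ and $4\times4$ minors to $1+y^2$ and $(1+y^2)^2$, so that $d_3=d_4=1+y^2$.

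Finally I would transport the answer back through the factor $(1-y^2)$. Choosing a basis $f_1,\dots,f_6$ of $Z_3(X)\cong k^6$ adapted to $M$, so that $M=\langle f_1,\,f_2,\,(1+y^2)f_3,\,(1+y^2)f_4\rangle$, gives $B_3(X)=(1-y^2)M=\langle (1-y^2)f_1,\,(1-y^2)f_2,\,(1-y^4)f_3,\,(1-y^4)f_4\rangle$, using $(1-y^2)(1+y^2)=1-y^4$. Reading off the quotient in this basis yields $H_3(X)=k^{\oplus 2}\oplus\bigl(k/(1-y^2)\bigr)^{\oplus 2}\oplus\bigl(k/(1-y^4)\bigr)^{\oplus 2}$, as claimed. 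The only obstacle is the bookkeeping of the Smith normal form of the reduced $4\times 6$ matrix; the observation that the entire boundary image is divisible by $(1-y^2)$ is what both shortens that computation and transparently explains the appearance of the $1-y^4=(1-y^2)(1+y^2)$ torsion.
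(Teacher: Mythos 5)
Your proposal is correct and follows essentially the same route as the paper's proof: identify $Z_3(X)$ as the free rank-$6$ kernel via Lemma~\ref{lem:d32}, compute the Smith normal form of the image of $d_4$ from Lemma~\ref{lem:d4} (invariant factors $1-y^2,\,1-y^2,\,1-y^4,\,1-y^4$), and conclude by the structure theorem over the PID $k$. The only difference is in execution: the paper obtains the Smith form by unspecified elementary row and column operations, whereas you corestrict to $Z_3$, factor $B_3=(1-y^2)M$, and determine the invariant factors of $M$ by determinantal divisors and reduction modulo $(1+y^2)$ --- a more transparent way of carrying out the same computation, which also explains the appearance of $1-y^4=(1-y^2)(1+y^2)$.
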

\begin{proof}
Applying a sequence of elementary column and row operations to the matrix $d_4$ given in Lemma~\ref{lem:d4}, we obtain the Smith normal form 

$$
\left[ \begin{array}{rrrr} 
	1-y^2&&&\\
	&1-y^2&&\\
	&&1-y^4&\\
	&&&1-y^4
	\end{array}
	\right]
$$
where, for simplicity, we omit the zero columns on the right of the nontrivial diagonal. Since by Lemma~\ref{lem:d32} the kernel of $d_3$ is six-dimensional, the assertion follows.
\end{proof}

\section{Yang-Baxter cohomology}\label{sec:YBcohomology}

		Let $A$ be an abelian group. Then by dualizing the chain complex in Subsection~\ref{sec:YBchain}, we obtain a cohomology theory, called Yang-Baxter cohomoogy, with coefficients in $A$, and differentials written as $\delta^{n+1}: C^n(X; A)  \rightarrow C^{n+1}(X;A)$. We denote  the cohomology groups by 
		$H^n(X;A)$.
We observe that the universal coefficient theorem determines cohomology groups as follows.

\begin{proposition}
Let $k=\Q [y, y^{-1}]$. Then we have 
\begin{eqnarray*}
H^2(X; k) &=&  k^{\oplus 2}, \\
H^3(X; k) &=&  k^{\oplus 2}\oplus k/(1-y^2)\oplus k/(1-y^4). 
\end{eqnarray*}
\end{proposition}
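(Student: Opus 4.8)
The plan is to invoke the Universal Coefficient Theorem (UCT) for cohomology, feeding it the homology groups already computed in Theorems~\ref{thm:secondhomology} and \ref{thm:thirdhomology} together with the earlier observation that $H_1(X)=V=k^2$. First I would note that $k=\Q[y,y^{-1}]$ is a principal ideal domain, being the localization of the PID $\Q[y]$ at the powers of $y$, and that every chain module $C_n(X)=M\otimes V^{\otimes n}\otimes M = V^{\otimes n}$ is free of rank $2^n$. These two facts are exactly what is needed for the split form of the UCT: for each $n$ there is a short exact sequence
$$0 \to {\rm Ext}^1_k(H_{n-1}(X), k) \to H^n(X;k) \to \Hom_k(H_n(X), k) \to 0$$
which splits (non-naturally).

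Next I would record the two elementary module computations that drive everything. Since $k$ is an integral domain, $\Hom_k(k/(f),k)=0$ for every nonzero $f$, because a homomorphism must send a generator to an element killed by $f$, and the only such element is $0$. From the free resolution $0\to k \xrightarrow{\;f\;} k \to k/(f)\to 0$, applying $\Hom_k(-,k)$ and reading off the cokernel gives ${\rm Ext}^1_k(k/(f),k)\cong k/(f)$. A free summand $k$ contributes a copy of $k$ to $\Hom$ and nothing to ${\rm Ext}^1$. In particular the torsion summands of the homology are invisible to $\Hom$ but reappear, unchanged, as ${\rm Ext}^1$ contributions one degree up.

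Then I would simply substitute. For $n=2$: the term ${\rm Ext}^1_k(H_1(X),k)$ vanishes because $H_1(X)=k^2$ is free, while $\Hom_k(H_2(X),k)=k^2$ since the torsion summands $k/(y^2-1)$ and $k/(y^4-1)$ die; hence $H^2(X;k)=k^2$. For $n=3$: $\Hom_k(H_3(X),k)=k^2$ (again only the free part survives), and ${\rm Ext}^1_k(H_2(X),k)=k/(y^2-1)\oplus k/(y^4-1)$, arising from the torsion of $H_2(X)$ with the free part contributing nothing. Splitting the sequence yields $H^3(X;k)=k^2\oplus k/(y^2-1)\oplus k/(y^4-1)$, which is the stated group since $(1-y^2)$ and $(y^2-1)$ generate the same ideal, and likewise for $y^4$.

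The computation itself is mechanical; the only point deserving genuine care is the justification of the hypotheses of the split UCT, namely that $k$ is a PID and that the chain complex consists of free modules, so that ${\rm Ext}^1_k$ is the correct and sole obstruction term and the extension splits. Once that is in place, the result is immediate from the previously computed $H_1$, $H_2$, and $H_3$, and no further obstacle arises.
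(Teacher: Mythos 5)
Your proof is correct and follows essentially the same route as the paper: both apply the Universal Coefficient Theorem to the homology groups $H_1(X)=k^2$, $H_2(X)$, and $H_3(X)$, using that free summands survive only in $\Hom$ and torsion summands $k/(f)$ reappear only as ${\rm Ext}^1$ one degree up. If anything, your write-up is more careful than the paper's: you justify the hypotheses (that $k=\Q[y,y^{-1}]$ is a PID and the chain modules are free) and place the $\Hom$/${\rm Ext}^1$ terms with the correct degree shift, whereas the paper's displayed UCT sequence has the indices of the two terms inadvertently swapped, though its computation tacitly uses the correct form.
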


\begin{proof}
The universal coefficient theorem reads 
$$0 \longrightarrow {\rm Ext^1(H_n(X;A), B) \longrightarrow H^n(X; B) 
 \longrightarrow
 {\rm Hom}(H_{n-1}(X,A), B} \longrightarrow 0 . $$
 We take $A=B=k$. Since $H_1(X;k)=k$, and by Theorem~\ref{thm:secondhomology},
 we obtain $H^2_R(X;\mathbbm k)$ as stated.
We have ${\rm Ext}^1(k/ f k , k) \cong k/ f k $ for a Laurent polynomial $f(y)$ in $k$, hence Theorem~\ref{thm:secondhomology} and Theorem~\ref{thm:thirdhomology} 
determine $H^3_R(X;\mathbbm k)$
as stated. 
\end{proof}

\begin{remark}
{\rm
A common argument to show that a $n$-dimensional cohomology group is nontrivial
is to exhibit a non-trivial $n$-cocycle $\theta$ that evaluates an $n$-cycle $x$ non-trivially,
$\theta(x) \neq 0$. We present a diagrammatic method to do this for $H^2$, even though it is already proved, with a hope that a similar technique might prove productive in higher dimensions.

Specifically, we show that $\alpha$ is a non-trivial $2$-cocycle. Figure~\ref{alphacocy} shows that the left differential applied to $\alpha$ gives zero. A similar procedure is used for the right differential. 
By Theorem~\ref{thm:secondhomology} the class represented by $e_1\otimes e_2$ is non-trivial, 
and we observe that $\alpha (e_1\otimes e_2)\neq 0$. 
Hence  $\alpha$ is nontrivial. 

Indeed to show that $\alpha $ is a $2$-cocycle, one could also explicitly compute
$$
\begin{cases}
	(1-y^4) \alpha (e_1\otimes e_1) + (y^2-1) \alpha (e_1\otimes e_2) + y^2(y^2-1) \alpha(e_2\otimes e_1) \\ \quad = 0+(y^2-1)(-y) +  y^2(y^2-1) (y^{-1}=  0, \\
	(1-y^2)  \alpha (e_1\otimes e_2) + y^2(1-y^2)  \alpha(e_2\otimes e_1) + (y^4-1)  \alpha(e_2\otimes e_2) \\ \quad = (1-y^2)  (-y)+  y^2(1-y^2) y^{-1} +0=0
\end{cases}
$$
as desired.
}
\end{remark}

\begin{figure}[htb]
	\begin{center}
		\includegraphics[width=2.5in]{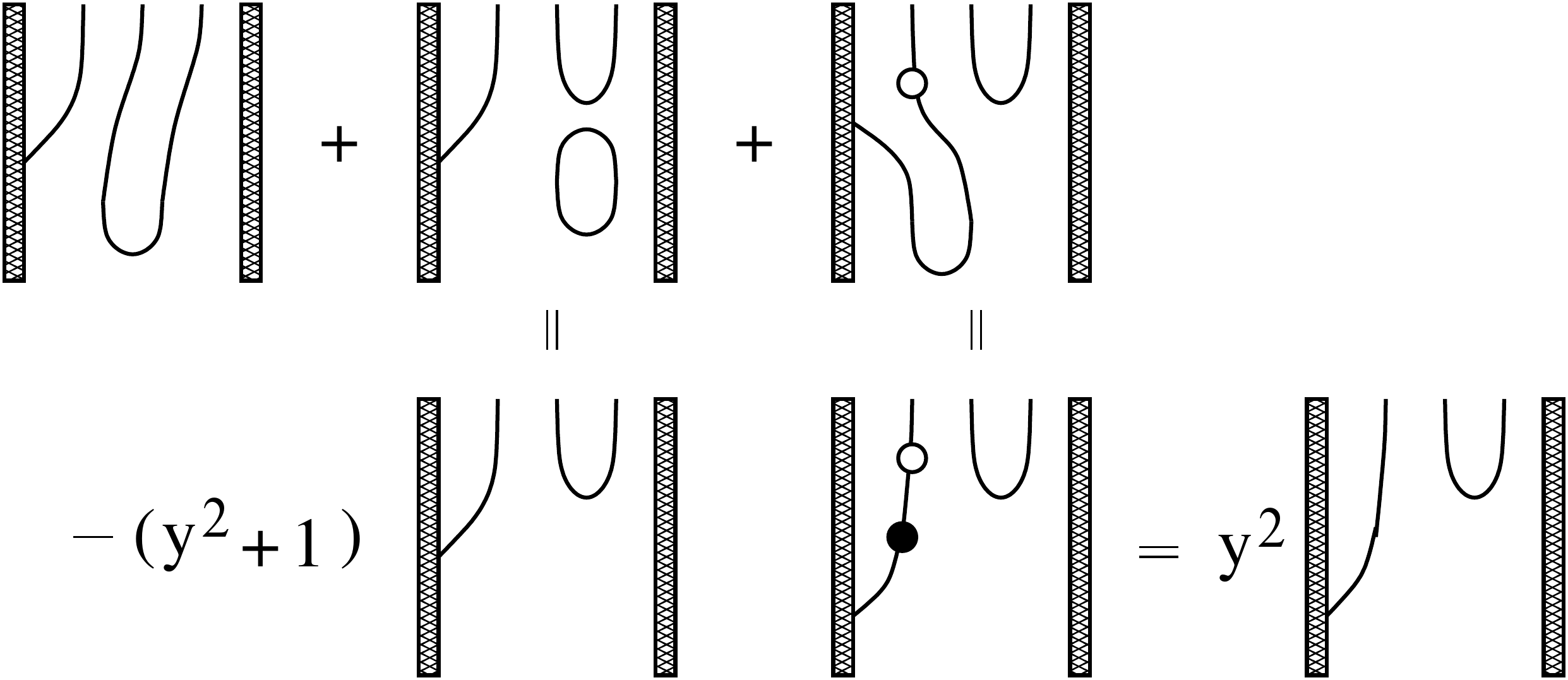}
	\end{center}
	\caption{The pairing $\alpha$ is a 2-cocycle}
	\label{alphacocy}
\end{figure}

\section{Further computations in higher dimensions}\label{sec:higherdim}

In this section we exhibit some diagrammatic computations in higher dimensions and observe annihilations by specific polynomials.

For $i=1, \ldots, n$, 
let $e_{1, i}^n = e_1 \otimes \cdots \otimes e_1 \otimes e_2  \otimes e_1 \otimes \cdots \otimes e_1$ where there are $n$ factors and $e_2$ is at the $i^{\rm th}$ position, and similarly $e_{2, i}^n= e_2 \otimes \cdots \otimes e_2 \otimes e_1 \otimes e_2 \otimes \cdots \otimes e_2$. By convention define
$e_{j, 0}^n =e_j \otimes \cdots \otimes e_j$ for $j=1,2$. 
When understood we surpress the superscript $n$.

\begin{lemma}\label{lem:aj}
For all positive integer $n$, we have $d_n( e_{j, 0}^n)=0$ for $j=1,2$.
\end{lemma}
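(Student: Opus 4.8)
The plan is to reduce everything to one observation: the operator $R$ fixes each diagonal vector $e_j \otimes e_j$. First I would record this fact. By the skein decomposition $R = I + \beta\alpha$ established in Section~\ref{sec:KauffR}, together with $\alpha(e_j \otimes e_j) = 0$ for $j = 1,2$ (this is the $i=j$ case of Remark~\ref{rem:maps}, and is also visible as the first and fourth columns of the matrix $R$), we obtain
$$
R(e_j \otimes e_j) = e_j \otimes e_j + \beta\bigl(\alpha(e_j \otimes e_j)\bigr) = e_j \otimes e_j, \qquad j = 1,2.
$$

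Next I would substitute $e_{j,0}^n = e_j \otimes \cdots \otimes e_j$ into the defining compositions of $d_{i,n}^\ell$ and $d_{i,n}^r$. Every factor of the form $\mathbbm{1}^{a} \otimes R \otimes \mathbbm{1}^{b}$ appearing there acts on a pair of adjacent tensorands both equal to $e_j$, so by the previous step each such factor restricts to the identity on $e_{j,0}^n$. Consequently all the $R$-factors collapse, and the only surviving operation in $d_{i,n}^\ell$ is the trailing left action $\mu_\ell$, which (being trivial) absorbs one copy of $e_j$ into the left factor $M = k$ and returns $e_{j,0}^{n-1}$; in particular the result is independent of $i$. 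The same argument with $\mu_r$ in place of $\mu_\ell$ gives $d_{i,n}^r(e_{j,0}^n) = e_{j,0}^{n-1}$ as well, the right action now absorbing one $e_j$ into the right factor $M$. Diagrammatically, this is simply the statement that for an all-$e_j$ input every crossing in a curtain map disappears, leaving a single wall that absorbs one strand.

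Finally I would combine these. For each $i$ we have $d_{i,n}^\ell(e_{j,0}^n) = e_{j,0}^{n-1} = d_{i,n}^r(e_{j,0}^n)$, so each bracket $[\,d_{i,n}^\ell - d_{i,n}^r\,]$ annihilates $e_{j,0}^n$; summing over $i$ with the signs $(-1)^i$ yields $d_n(e_{j,0}^n) = 0$, as claimed, for every positive integer $n$ and $j = 1,2$.

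I do not anticipate a serious obstacle. The only point needing a moment of care is the assertion that the left and right differentials produce the \emph{same} element $e_{j,0}^{n-1}$: this holds because $M \otimes V^{\otimes (n-1)} \otimes M$ is canonically identified with $V^{\otimes (n-1)}$ under the trivial actions, and on a constant tensor deleting the leftmost or the rightmost factor gives the identical $e_j^{\otimes(n-1)}$. One should also confirm that $\alpha(e_j \otimes e_j) = 0$ is applied for both $j=1$ and $j=2$, which it is.
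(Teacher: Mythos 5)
Your proof is correct, but it follows a genuinely different route from the paper's. The paper proves this lemma as a consequence of its skein decomposition of the differentials (Theorem~\ref{thm:diff}): it observes that every generator $g_k, g'_k, h_k, h'_k$ containing a factor $\alpha$ kills $e_{j,0}$ because $\alpha(e_j\otimes e_j)=0$, so that for odd $n$ the only surviving terms are $g_1'g_1^{n-1}$ and $h_1^{n-1}h_1'$, which cancel with opposite signs, while for even $n$ no $\alpha$-free term exists at all and the image vanishes outright. You instead bypass Theorem~\ref{thm:diff} entirely and argue from the original definition of the curtain maps: the single observation $R(e_j\otimes e_j)=e_j\otimes e_j$ (which follows from $R=I+\beta\alpha$ and $\alpha(e_j\otimes e_j)=0$, or directly from the first and fourth columns of the matrix) collapses every crossing, so each partial differential satisfies $d^\ell_{i,n}(e_{j,0}^n)=e_{j,0}^{n-1}=d^r_{i,n}(e_{j,0}^n)$, and the cancellation happens inside each signed bracket $[\,d^\ell_{i,n}-d^r_{i,n}\,]$ rather than between two distinguished terms of a global expansion. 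Your argument is more elementary and more general — it applies verbatim to any Yang--Baxter operator fixing the diagonal vectors, with trivial wall actions, independently of the Jones $R$-matrix or its skein theory — whereas the paper's proof buys uniformity with the rest of Section~\ref{sec:higherdim}: the same bookkeeping of $\alpha$-free versus $\alpha$-containing generators is exactly what is then refined in Proposition~\ref{lem:bj} to compute $d_n(e_{j,i})$ for $i\geq 1$, where your collapse argument no longer applies.
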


\begin{proof}
If any of the terms  $g_k, g_k', h_k, h_k'$, say $g_k$ for some $k$,  that appear in Theorem~\ref{thm:diff} contains $\alpha$, then $g_k(e_{j,0})=0$, since $\alpha(e_j \otimes e_j)=0$ for $j=1,2$. 
Hence if $n$ is odd, 
the only non-zero terms in $d_n$  when evaluated by  $e_{j, 0} $ are 
$$(g_1' g_1^{n-1} ) (e_{j, 0}) = e_{j ,0} =
(h_1^{n-1} h_1' ) (e_{j,0} ) $$
and they cancel with opposite signs in $d_n$. 
If $n$ is even, then there is no term without  $\alpha$, hence the image vanishes.
\end{proof}


%
%

\begin{proposition}\label{lem:bj}
For all positive integer $n>3$  and 
$i=1, \ldots, n$,
 we have the following.
 If $n$ is odd, then the coefficient of $e_{1,0}^{n-1}$ and $e_{2,0}^{n-1}$, respectively,  is non-zero for the following:
  $$
\begin{array}{ll}
 d_n(e_{1,1}) = ( y^2) e_{1,0}, &
d_n(e_{1,2i-1}) =  ( 1 - y^{4i-4} )  e_{1, 0} , \\
d_n(e_{1,2i}) =  (  y^{4i -2 } - y^2 ) e_{1, 0} , & 
d_n(e_{1,n}) = ( 1 - y^{2(n-1)} ) e_{1, 0} ,  \\
d_n(e_{2,1})=  (y^{2n-2}-1 )e_{2,0} ,  & 
d_n(e_{2,2i+1}) =(  y^2   - y^{2n-4i-2}  ) e_{2,0} \\
d_n ( e_{2,2i} ) = (y^{2n-4i} -1) e_{2,0} . 
\end{array}
$$
and zero otherwise. If $n$ is even, then the following terms have non-zero coefficients for  $e_{1,0}^{n-1}$ and $e_{2,0}^{n-1}$, and zero otherwise:
$$
\begin{array}{ll}
d_n(e_{1,2i-1}) =(y^{4i-2}  - y^2 ) e_{1,0} , & 
d_n(e_{1,2i})= (1- y^{4(i-1)} ) e_{1,0}  , \\
d_n(e_{2,2i-1}) =(y^2 -  y^{2(n-2i+3)}) e_{2,0} , &
d_n(e_{2,2i})= ( y^{2(n-2i+2)} -1 ) e_{2,0}.
\end{array}
$$
\end{proposition}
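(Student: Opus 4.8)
The plan is to feed the single-flip vectors $e^n_{1,i}$ and $e^n_{2,i}$ into the decomposition of $d_n$ supplied by Theorem~\ref{thm:diff}, exploiting that the two target vectors $e^{n-1}_{1,0}$ and $e^{n-1}_{2,0}$ are \emph{pure} (all tensor factors equal). First I would record the crucial vanishing principle: every generator $g_k$ or $h_l$ with index $\geq 2$ produces, by Lemma~\ref{lem:generators} and Lemma~\ref{lem:rightgenerators}, a ``mixed cap'' $(\sym)$, which carries both an $e_1$ and an $e_2$ in a fixed pair of adjacent output slots. Hence any word in $S(n)$ containing such a generator lands in the span of tensors having both symbols in that pair of slots, and so contributes $0$ to the coefficient of a pure vector. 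Since each left word contains exactly one reducing generator $g'_{i_0}$ (the $g_{i_j}$ preserve the number of strands, while $g'_{i_0}$ drops it by one), the only surviving left terms are the words $g'_{i_0}\,g_1^{\,n-i_0}$ with $n-i_0$ even, the extreme case $i_0=1$ being the delete map $\mu_\ell\otimes\mathbbm{1}^{\,n-1}$; symmetrically on the right. In particular only $O(1)$ of the words in $S(n)$ can contribute, and $i_0\equiv n \pmod 2$ throughout.

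Next I would localize the flip. In $g'_{i_0}g_1^{\,n-i_0}$ the identity block copies the last $n-i_0$ strands and the pass-through part of $g'_{i_0}$ copies input strands $1,\dots,i_0-2$; all of these must equal $e_1$ (resp.\ $e_2$) for a pure image. Thus the unique flipped strand of $e^n_{1,i}$ must be one of the two strands fed into the cup $\alpha$ of $g'_{i_0}$, i.e.\ at position $i_0-1$ or $i_0$, which together with the parity constraint $i_0\equiv n$ pins $i_0$ to a single value determined by $i$. The same analysis on the right, with $h'_{i_0}$ whose cup sits on its first two strands, pins the surviving right index. Crucially, for fixed $i$ a left word and a right word may \emph{both} survive (the flip may be absorbed by a left-hand cup near position $i$ or by a right-hand cup), and it is the sum of these two contributions that produces the two-term coefficients in the statement; the lone constant $1$ and the genuinely boundary entries at $i=1$ and $i=n$ come precisely from the delete maps $\mu_\ell,\mu_r$, which appear only when $i_0=1$ is parity-allowed, that is only when $n$ is odd. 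This already explains why the odd-$n$ list carries the extra entries $d_n(e_{1,1}),d_n(e_{1,n}),d_n(e_{2,1})$ while the even-$n$ list does not.

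With the surviving terms identified, the remaining work is a direct substitution into the coefficient formulas $\theta$ and $\tau$ of Lemma~\ref{lem:generators} and Lemma~\ref{lem:rightgenerators}, evaluated on a string of $e_1$'s (resp.\ $e_2$'s) carrying a single flip at the cup. Using Remark~\ref{rem:maps} to read off $\alpha(e_1\otimes e_2)=-y$, $\alpha(e_2\otimes e_1)=y^{-1}$, and the pass-through weights $y^{4-2i}$, $y^{2i-2}$, each $\theta$ and $\tau$ collapses to a single monomial in $y$; retaining the $-e_1$ (resp.\ $+e_2$) summand of the cap coming from $\lambda_\ell$ and the $+e_1$ summand coming from $\lambda_r$, and multiplying by the global sign $(-1)^n$ of the left block from Theorem~\ref{thm:diff}, one obtains the monomials $y^{4i-2}$, $y^{4i-4}$, and so on; adding the matched left and right contributions yields the displayed binomials. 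A final bookkeeping step checks ``zero otherwise'': a vector $e^n_{1,i}$ can never map to the all-$e_2$ vector $e^{n-1}_{2,0}$ because the pass-through strands preserve the majority symbol, so the cross coefficients vanish, and for positions outside the listed shapes the matched left and right monomials cancel.

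I would flag the sign and boundary bookkeeping as the main obstacle. The structural reduction is clean, but making the two matched monomials combine with the correct relative sign --- reconciling the internal $(-1)^{i_{k-1}}$ hidden in $\theta,\tau$, the global $(-1)^n$ in front of the left block, and the signs from selecting $-e_1$ out of $\lambda_\ell$ versus $+e_1$ out of $\lambda_r$ --- is where errors creep in. Equally delicate are the endpoints $i=1$ and $i=n$, where one member of the matched pair degenerates into a $\mu_\ell$ or $\mu_r$ delete (contributing a bare $1$ rather than a cup monomial) and where a careless index shift would double-count or drop a term; isolating these cases is exactly why the statement is restricted to $n>3$, since for small $n$ the special left and right positions collide.
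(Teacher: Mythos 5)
Your proposal is correct and follows essentially the same route as the paper's proof: the paper's Appendix~B argument likewise starts from the Theorem~\ref{thm:diff} decomposition, discards every word containing an unprimed generator (equivalently, a $\beta$, whose image $y(\sym)$ has zero coefficient on pure tensors), reduces to the parity-allowed words $g'_{i_0}g_1^{n-i_0}$ and $h_1^{2k}h'_{n-2k}$, and then evaluates the surviving matched left/right pair on each single-flip vector by reading off the $\alpha$, $\lambda_\ell$, $\lambda_r$, $\xi$, $\zeta$ contributions. The only cosmetic difference is that you substitute into the $\theta$, $\tau$ formulas of Lemmas~\ref{lem:generators} and \ref{lem:rightgenerators} where the paper reads the same monomials off the diagrams of Figure~\ref{all1}.
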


\begin{proof}
Since the image of $\beta$ has zero coefficients for $e_{j,0}$ 
for $j=1,2$, 
the only non-zero terms with  $e_{j,0}$  in the image  are 
 the maps described below.
For odd $n$, the maps are 
$g_1' g_1^{n-1} $, $g_{2i+1}'  g_1^{n-2i - 1 }$ for $i=1, \ldots,  (n-1)/2$  ($g_{n}'$ is the case $i=(n-1)/2$),
  $h_1^{2i} h_{n-2i}' $ ($h_{n}'$ is the case $i =0$),  $h_1^{n-1} h_1' $.
 For even $n$,  the maps are
 $g_{2i}'  g_1^{n-2i}$ for $i=1, \ldots,  n/2$ ($g_{n}'$ is the case $i=n/2$),
  $h_1^{2i } h_{n-2i }' $   ($h_{n}'$ is the case $i =0$).
  These maps are represented by diagrams in Figure~\ref{all1} (1)--(4) in this order.
 Note that the requirement in Theorem~\ref{thm:diff} that the exponent of $g_1$ be even leads to the conditions on the parity.


The actual values can also be computed from diagrams, counting  the contributions of $\xi$ and $\zeta$ to the powers of $y$. 

For even $n$ and for $e_{1,0}$, and for $i=1, \ldots, n/2$, we have
\begin{eqnarray*}
(g_{2i}'  g_1^{n-2i} ) ( e_{1, 2i-1} ) &=& (-y) ( y^{-1}) y^{2(2i-2)} e_{1,0}
  \quad = \quad  - y^{4i-4}   e_{1,0} \\
(g_{2i}'  g_1^{n-2i} ) ( e_{1,2i} ) 
&=&   (-y) ( -y  ) y^{2(2i-2)} e_{1,0}
  \quad = \quad     y^{4i-2}  e_{1,0} \\
( h_1^{2i} h_{n-2i}' ) ( e_{1, 2i+1}) 
&=& y (y^{-1}) e_{1,0}  \quad = \quad   1 e_{1,0}  \\ 
( h_1^{2i} h_{n-2i}' ) (  e_{1, 2i+2}) 
&=&y  (-y )  e_{1,0}  \quad = \quad   - y^2  e_{1,0} . 
\end{eqnarray*}
For example, the first tensor factor of  $(g_{2i}'  g_1^{n-2i} ) ( e_{1,2i} ) $  comes from $\lambda_\ell (1)=y(e_2 - e_1)$, and we look at the coefficient of $e_1$, so that this map contributes $(-y)$.
Then $g_{2i}' (e^{2i}_{1,2i}$ contributes $\alpha(e_1 \otimes e_2)=y^{-1}$
 and $\xi(e_1)^{2i-2}=y^{2(2i-2)}$. Other terms are computed similarly with the aid of Figure~\ref{all1}. 
From these we compute
\begin{eqnarray*}
d_n(e_{1,2i-1}) &=& ( -  g_{2i}' g_1^{n-2i}+ h_1^{2i-2} h_{n-2i+2}'  ) ( e_{1, 2i-1} ) \quad =\quad (-y^{4i-4}  +1) e_{1,0}. \\
d_n(e_{1,2i}) &=& ( - g_{2i}' g_1^{n-2i} - h_{2i-2}'  h_{n-2i+2}' ) ( e_{1, 2i} ) \quad =\quad ( y^{4i - 2 } - y^2 ) e_{1,0} . 
\end{eqnarray*} 
Other cases are found in Appendix~\ref{apdx:higher}.
\end{proof}

\begin{figure}[htb]
\begin{center}
\includegraphics[width=4.5in]{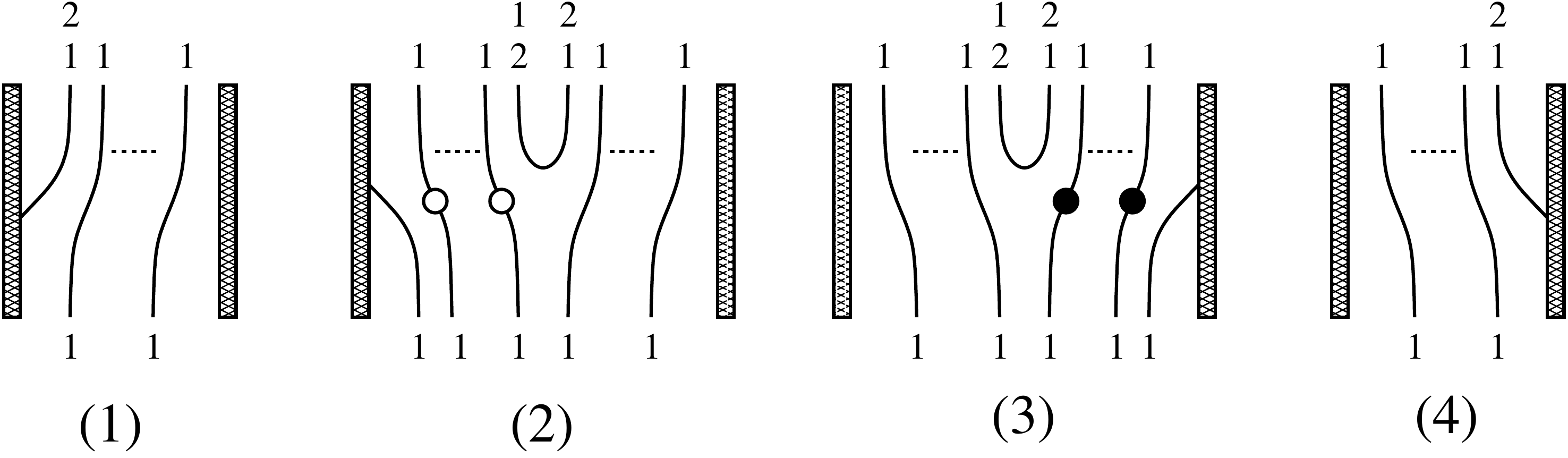}
\end{center}
\caption{Maps with image $e_1 \otimes \cdots \otimes e_1$}
\label{all1}
\end{figure}
 
\begin{corollary}
For every odd $n$, there exists a rank 2 submodule of $H_n(X)$ that is annihilated by 
multiplication by $y^4-1$.

For every even $n$, there exists a rank 1 submodule $K_1$ of $Z_n(X)$ that is in the boundary group $B_n(X)$,
and a rank 1 submodule $K_2$ that is annihilated by 
multiplication by $y^2-1$.
\end{corollary}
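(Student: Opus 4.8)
The plan is to realize the desired submodules as those generated by the two ``extreme'' cycles $e_{1,0}^n$ and $e_{2,0}^n$, controlling them by a weight grading on the complex; throughout I take $n$ large enough that Proposition~\ref{lem:bj} applies at level $n+1$ (that is $n\ge 3$, the smaller cases being the explicit computations of Section~\ref{sec:lowdim}). First I would record the grading: decompose $C_n=V^{\otimes n}$ by the number of tensor factors equal to $e_2$, so that $e_{1,0}^n$ is the unique basis vector of weight $0$ and $e_{2,0}^n$ the unique one of weight $n$. Inspecting Lemma~\ref{lem:generators} and Lemma~\ref{lem:rightgenerators} shows that $g_k,h_l$ preserve weight while $g_k',h_l'$ split into a weight-preserving part (the $e_2$ summand of $e_2-e_1$, resp.\ $e_1-e_2$) and a part lowering weight by one (the $e_1$ summand); hence each differential is weight non-increasing, $d_n=d^{=}_n+d^{-}_n$. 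Together with Lemma~\ref{lem:aj}, which makes $e_{1,0}^n,e_{2,0}^n$ cycles, this means that the coefficient of $e_{j,0}^{n-1}$ in any boundary comes only from the weight-preserving part, i.e.\ is exactly as tabulated in Proposition~\ref{lem:bj}.

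Next I would carry out the clean ``projection'' half of the argument. Let $p\colon V^{\otimes n}\to k\,e_{1,0}^n\oplus k\,e_{2,0}^n$ extract these two coordinates. Because Proposition~\ref{lem:bj} asserts that no basis vector other than the $e_{1,i}$ (resp.\ $e_{2,i}$) contributes to the $e_{1,0}$ (resp.\ $e_{2,0}$) coordinate, one gets $p(B_n(X))=I_1\,e_{1,0}^n\oplus I_2\,e_{2,0}^n$, where $I_1,I_2$ are the ideals generated by the coefficients of Proposition~\ref{lem:bj} at level $m=n+1$. Factoring those Laurent polynomials gives: for $n$ odd (so $m$ even) $I_1=I_2=(y^4-1)$; for $n$ even (so $m$ odd) $I_1=k$, since it contains the unit coefficient $y^2$ of $d_m(e_{1,1})$, and $I_2=(y^2-1)$. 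From $p$ alone it then follows that whenever $a\,e_{1,0}^n+b\,e_{2,0}^n\in B_n(X)$ one has $a\in I_1$ and $b\in I_2$; this yields independence of the two classes, the non-vanishing of $[e_{2,0}^n]$ in the even case (as $1\notin I_2$), and the upper bounds on the annihilators.

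It remains to prove the complementary inclusions, namely that $(y^4-1)e_{j,0}^n$ (for $n$ odd), $e_{1,0}^n$ (for $n$ even, from the unit in $I_1$), and $(y^2-1)e_{2,0}^n$ (for $n$ even) actually lie in $B_n(X)$. For this I would exhibit preimages under $d_{n+1}$: the relevant weight-$(m-1)$ basis vectors are $e_{1,4}^{m}$ and $e_{2,m}^m$ in the odd case, and $e_{1,1}^m$ and $e_{2,n}^m$ in the even case, whose weight-preserving images are precisely the asserted multiples of $e_{j,0}^n$ (the target weight space being one-dimensional). Granting these, the odd case gives a surjection $\phi\colon k^2\to N=\langle[e_{1,0}^n],[e_{2,0}^n]\rangle$ whose kernel is forced to be exactly $(y^4-1)\oplus(y^4-1)$, so $N\cong(k/(y^4-1))^{2}$ is the claimed rank $2$ submodule; the even case gives $K_1=k\,e_{1,0}^n\subseteq Z_n(X)\cap B_n(X)$, free of rank one since $e_{1,0}^n$ is a basis vector, and $K_2=\langle[e_{2,0}^n]\rangle\cong k/(y^2-1)$.

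The main obstacle is precisely this last step. Proposition~\ref{lem:bj} controls only the extreme-weight coordinate of $d_{n+1}$, whereas being a boundary requires a chain whose \emph{entire} image equals the desired multiple of $e_{j,0}^n$; the image of a single basis vector such as $e_{2,n}^m$ also carries a lower-weight tail $d^{-}_{n+1}(e_{2,n}^m)$ that must be cancelled. I would address this using the weight filtration by the subcomplexes $G_w=\bigoplus_{w'\le w}C^{(w')}$, which are genuine subcomplexes since $d$ is weight non-increasing, together with explicit cancelling chains assembled from the generators as in the diagrammatic computations of Figure~\ref{all1}. The one-dimensionality of the top and bottom weight spaces is what keeps this bookkeeping finite and pins the surviving coefficient to the tabulated value; verifying that the lower-weight tails can indeed be killed is the delicate computation, which I expect to defer to the appendix.
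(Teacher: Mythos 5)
Your outline reproduces the paper's own argument — the same candidate submodules generated by $e_{1,0}^n$ and $e_{2,0}^n$, Lemma~\ref{lem:aj} to see they are cycles, the coefficient formulas of Proposition~\ref{lem:bj} at level $n+1$, and the same GCD computations — organized more carefully through the weight grading. Your ``projection'' half is correct and does prove what you claim: any element $a\,e_{1,0}^n+b\,e_{2,0}^n$ of $B_n(X)$ must have $a\in I_1$, $b\in I_2$, so the two classes are independent and nonzero. But the half you defer to an appendix — producing chains whose boundary is \emph{exactly} $(y^4-1)e_{j,0}^n$ in the odd case, and exactly $e_{1,0}^n$ and $(y^2-1)e_{2,0}^n$ in the even case — is not a finishing verification; it is the entire content of the corollary, and you will not find it in the paper either. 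The paper's proof makes precisely the leap you flag: it reads the coefficient formulas of Proposition~\ref{lem:bj} as if they computed ${\rm Im}(d_{n+1})\cap k\,e_{j,0}^n$ on the nose, ignoring the tails $d^{=}$ of the images.

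Worse, the deferred step is not merely delicate: it is false at $n=3$, the first case of your claimed range $n\geq 3$, so no cancellation scheme can complete the plan. Write $ijkl$ for $e_i\otimes e_j\otimes e_k\otimes e_l$ and $ijk$ for $e_i\otimes e_j\otimes e_k$. From the matrix of Lemma~\ref{lem:d4}, the only basis images with nonzero $111$-coordinate are $d_4(1112)$ and $d_4(1121)$, and they are proportional, $d_4(1112)=-y^2\,d_4(1121)$; likewise $d_4(1222)=-y^2\,d_4(2122)$ and $d_4(1212)=-y^2\bigl(d_4(1221)+d_4(2112)\bigr)$, so ${\rm Im}(d_4)$ is spanned by $d_4(1121)$, $d_4(1221)$, $d_4(2112)$, $d_4(2122)$. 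If $c_1d_4(1121)+c_2d_4(1221)+c_3d_4(2112)+c_4d_4(2122)$ lies in $k\cdot 111$, then vanishing of the $222$-coordinate gives $c_4(1-y^4)=0$, hence $c_4=0$; the $212$-coordinate then gives $c_3(1-y^2)=0$, hence $c_3=0$; the $121$-coordinate gives $(c_2-c_1)(1-y^2)=0$, hence $c_2=c_1$; and the $221$-coordinate gives $c_2(y^4-1)=0$, hence $c_1=c_2=0$. Therefore ${\rm Im}(d_4)\cap k\,e_{1,0}^3=0$: the element $(y^4-1)e_{1,0}^3$ is \emph{not} a boundary, and in fact $[e_{1,0}^3]$ and $[e_{2,0}^3]$ generate a free rank $2$ submodule of $H_3(X)$, not a $(y^4-1)$-torsion one. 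The corollary at $n=3$ is true only because of the summands $(k/(1-y^4))^{\oplus 2}$ exhibited by Theorem~\ref{thm:thirdhomology}, which are carried by cycles other than $e_{1,0},e_{2,0}$. The even case suffers the same defect: Proposition~\ref{lem:bj} only says the $e_{1,0}$-coefficient of $d_{n+1}(e_{1,1})$ is $y^2$, not that $d_{n+1}(e_{1,1})=y^2e_{1,0}$, so $K_1\subseteq B_n(X)$ does not follow. Any correct proof must either locate the torsion on different cycles or establish the intersection ${\rm Im}(d_{n+1})\cap k\,e_{j,0}^n$ by an argument that genuinely handles all coordinates, and the $n=3$ computation above shows the submodule chosen here cannot work.
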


\begin{proof}
Let $n$ be odd.
Let $K$ be the rank 2 submodule of $C_n(X)$ generated by 
$e_{j} \otimes \cdots  \otimes  e_{j}$, $j=1,2$.
By Lemma~\ref{lem:aj}, $K$ is in $Z_n(X)$.
Since  $n+1$ is  even, Proposition~\ref{lem:bj} implies that 
 ${\rm Im}(d_{n+1})$ in  the submodule generated by $e_{1,0}$ in $Z_n(X)$ is spanned by 
${\rm GCD}\{ (y^{4(i-1)} -1) : i=2, \ldots, (n+1)/2 \} e_{1,0} $, and ${\rm GCD}\{ (y^{4(i-1)} -1) : i=2, \ldots, (n+1)/2 \} = y^4-1$. 
 Similarly,  ${\rm Im}(d_{n+1})$ in  the submodule generated by $e_{2,0}$ in $Z_n(X)$ is spanned by 
 ${\rm GCD}\{ (y^{2(n-2i-2)} -1) : i=2, \ldots, (n+1)/2 \} e_{2,0} $, and ${\rm GCD}\{ (y^{2(n-2i-2)} -1 ) : i=2, \ldots, (n+1)/2 \} = y^4-1$. Hence the rank 2 submodule of $Z_n(X)$ generated by $e_{j,0}$ for $j=1,2$ is annihilated by $(y^4-1)$. 
 
Let $n$ be even. Then $n+1$ is odd.
Let $K_j$ be the rank 1 submodule of $Z_n(X)$ generated by $e_{j,0}$ for $j=1,2$, respectively.
Since $ d_{n+1}(e_{1,1}) = ( y^2) e_{1,0} $ from Proposition~\ref{lem:bj} and $y^2$ is a unit, $K_1$ is in  ${\rm Im}(d_{n+1})$. 
The submodule $K_2$ is annihilated by the GCD of $y^{2(n+1)-4i}-1$ for $i=1, \ldots, n/2$, which is $y^2-1$. Thus the statement follows.
\end{proof}

The statement of the preceding corollary supports Przytycki-Wang's conjecture.

\bigskip

\noindent
{\bf Acknowledgement.}
We are grateful to Jozef Przytycki and Xiao Wang  for valuable conversations. 
MS was supported in part by NSF  DMS 1800443.

\bigskip

\appendix

\section{Proof of Theorem~\ref{thm:diff}}\label{apdx:diff}

In this section we use $\mu$ and $\lambda$ instead of $\mu_\ell$ and $\lambda_\ell$, respectively, for brevity. 
We need a few preliminary maps 
and results before proving the main theorem. 
First we introduce a class of operators $V^{\otimes n} \longrightarrow V^{\otimes n+1}$ whose diagrammatic interpretation is similar to the curtain differentials. Namely we set
\begin{eqnarray*}
\lefteqn{\Psi_n}\ \  &=& (\mu\otimes \mathbbm{1}\otimes \cdots \otimes  \mathbbm{1} )
\circ (R\otimes\mathbbm{1}\otimes \cdots \otimes \mathbbm{1})  \\
& & \circ \cdots \circ(\mathbbm{1}\otimes\cdots \mathbbm{1}\otimes R\otimes \mathbbm{1})
\circ (\mathbbm{1}\otimes\cdots \mathbbm{1}\otimes\beta),
\end{eqnarray*}
where $\mu$ indicates the action $\mu :k\otimes V \longrightarrow k$. We set $\Psi_0 = \lambda$, where $\lambda$ is the coaction $k\longrightarrow k\otimes V $. See Figure~\ref{Psi} for a diagram representing $\Psi_n$. We similarly define $\Psi'_n$, by symmetry, where we replace overpassing crossings with underpassing and the left action with the right action. 

\begin{figure}[htb]
\begin{center}
\includegraphics[width=.8in]{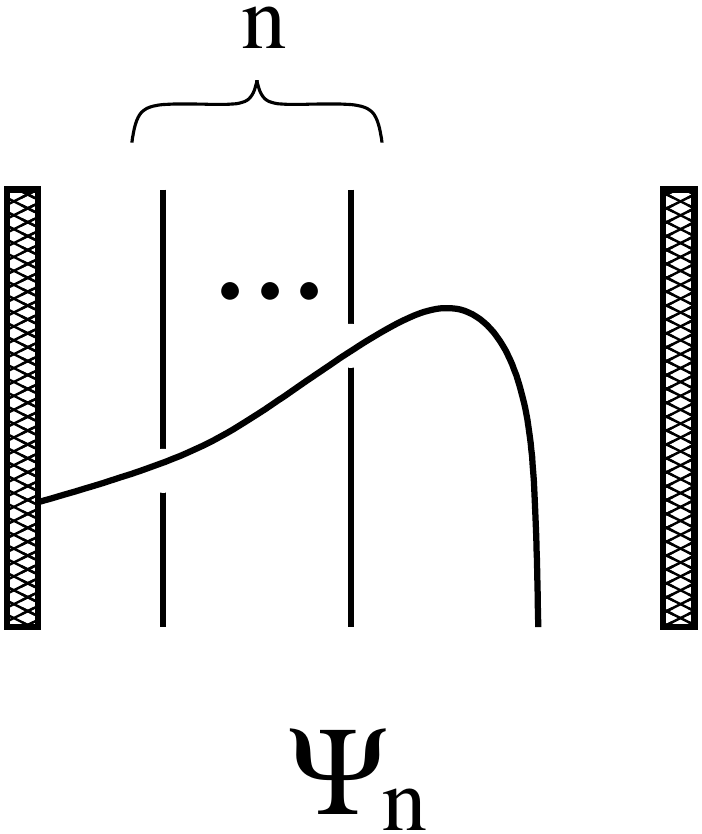}
\end{center}
\caption{Diagram representing $\Psi_n$}
\label{Psi}
\end{figure}

In the notation below, the dot $\cdot$ represents the horizontal concatenation of diagrams, that represents tensor product of maps. For example, if $f: V \rightarrow V$ is a map on $V$ and $\mathbbm{1}: V \rightarrow V$ denotes the identity map, then $f\cdot \mathbbm{1}$ denotes $f \otimes \mathbbm{1}$ on 
$V \otimes V$, and $\mathbbm{1}^k$ denotes the identity map on $V^{\otimes k}$. The dot may be abbreviated. 

\begin{remark}
	{\rm 
Lemmas~\ref{lem:leftdiff}, \ref{lem:psimap}, 
and \ref{lem:leftpsi} below are easily adapted, by symmetry, to the case of the right differentials upon exchanging $\Psi_n$, $\xi$, $\mu$ and $\lambda$ with $\Psi'_n$, $\zeta$, $\mu_r$ and $\lambda_r$, respectively.
}
\end{remark}

\begin{lemma}\label{lem:leftdiff}
	The left differentials $d^\ell_{n,n}$ can be decomposed in terms of $\Psi_k$ for all $n\in\N$ as follows:
$$
d_{n,n}^\ell = \sum_{m=2}^{n} \Psi_{n-m} \cdot \alpha \cdot \mathbbm{1}^{m-2}+\mu\cdot \mathbbm{1}^{n-1}.
$$
\end{lemma}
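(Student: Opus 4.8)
The plan is to prove the identity by induction on $n$, peeling off the rightmost crossing and resolving it with the skein relation $R=\mathbbm{1}+\beta\alpha$. Since the left differential only ever touches the left wall $M$, the right copy of $M$ may be carried along passively and suppressed; I work on $M\otimes V^{\otimes n}$, writing $R_j$ for $R$ acting on the $j$-th and $(j+1)$-st tensor factors and $\mu$ for the action that merges the wall with the adjacent strand. The first step is to record the recursion
\[
d^\ell_{n,n}=\bigl(d^\ell_{n-1,n-1}\otimes\mathbbm{1}\bigr)\circ R_{n-1}.
\]
This holds because, reading the defining composition of $d^\ell_{n,n}$ from the right, the crossings $R_{n-1},R_{n-2},\dots,R_1$ followed by $\mu$ slide the last strand into the wall; discarding only the first crossing $R_{n-1}$, the remaining string $\mu\circ R_1\circ\cdots\circ R_{n-2}$ is literally the defining composition of $d^\ell_{n-1,n-1}$ on the first $n-1$ strands with the $n$-th strand passive, i.e.\ $d^\ell_{n-1,n-1}\otimes\mathbbm{1}$.

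Next I substitute $R_{n-1}=\mathbbm{1}+(\beta\alpha)_{n-1}$ and treat the two summands separately. For the identity summand, applying the inductive hypothesis to $d^\ell_{n-1,n-1}$ and tensoring with $\mathbbm{1}$ gives $\sum_{m=2}^{n-1}\Psi_{n-1-m}\cdot\alpha\cdot\mathbbm{1}^{m-1}+\mu\cdot\mathbbm{1}^{n-1}$. The reindexing $m\mapsto m+1$ rewrites the sum as $\sum_{m=3}^{n}\Psi_{n-m}\cdot\alpha\cdot\mathbbm{1}^{m-2}$, so this summand already supplies every term of the claimed formula except the $m=2$ term, together with the trailing $\mu\cdot\mathbbm{1}^{n-1}$.

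For the $(\beta\alpha)_{n-1}$ summand I would factor out the cup $\alpha$ on the last two strands and identify the map that remains, $\bigl(d^\ell_{n-1,n-1}\otimes\mathbbm{1}\bigr)\circ\beta_{n-1}\colon V^{\otimes(n-2)}\to V^{\otimes(n-1)}$, with $\Psi_{n-2}$. This is exactly what the definition of $\Psi_{n-2}$ says: both maps first insert $\beta$ at the right end and then slide the left of the two new strands into the wall through the crossings $R_{n-2},\dots,R_1$, absorbing it with $\mu$ (compare Figure~\ref{Psi} with the curtain picture, Figure~\ref{curtain}). Hence this summand is $\Psi_{n-2}\cdot\alpha$, precisely the missing $m=2$ term, and combining the two summands produces the stated decomposition. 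The base case $n=1$, where the sum is empty and $d^\ell_{1,1}=\mu$, starts the induction, and the analogous statement for the right differentials follows from the symmetry between the left and right actions.

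The main obstacle is purely the bookkeeping of tensor positions: one must check that peeling off $R_{n-1}$ genuinely produces $d^\ell_{n-1,n-1}\otimes\mathbbm{1}$ (same crossing positions, passive last strand) and that the residual cap--slide--merge map is exactly $\Psi_{n-2}$ rather than a shifted or reflected variant. I expect the diagrammatic renderings to make both identifications immediate, so that the only genuinely computational point is the single reindexing $m\mapsto m+1$ used to match the two sums.
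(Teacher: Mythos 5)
Your proof is correct and is essentially the paper's own argument: both proceed by induction on $n$, resolving the rightmost crossing of $d^\ell_{n,n}$ by the skein relation $R=\mathbbm{1}+\beta\alpha$, so that the identity term yields $d^\ell_{n-1,n-1}\otimes\mathbbm{1}$ (handled by the inductive hypothesis, after the reindexing) and the $\beta\alpha$ term yields exactly the missing $m=2$ summand $\Psi_{n-2}\cdot\alpha$. The only difference is expository: you spell out the recursion $d^\ell_{n,n}=(d^\ell_{n-1,n-1}\otimes\mathbbm{1})\circ R_{n-1}$ and the identification $(d^\ell_{n-1,n-1}\otimes\mathbbm{1})\circ\beta_{n-1}=\Psi_{n-2}$ algebraically, where the paper states the resulting decomposition directly and refers to the diagrams in Figure~\ref{Psiskein}.
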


\begin{proof}
	The proof is by induction on $n$, the number of strings in the curtain representing $d_{n,n}^\ell$, i.e. the number of copies of $V$ in the domain of $d_{n,n}^\ell$. The base of the induction is easily verified by direct inspection. For $n=1$ the statement is in fact vacuously true, while for $n=2$ it is a consequence of the skein relation. Suppose the equation holds true for all $3\leq n \leq k$ and set $n = k+1$. Making use of the skein relation we can write 
	$$
	d_{k+1,k+1}^\ell = \Psi_{k-1} \cdot \alpha + d_{k,k}^\ell \cdot \mathbbm{1}.
	$$
	See Figure~\ref{Psiskein} for diagrams.
	Applying the inductive hypothesis to 
	$d_{k,k}^\ell$  we obtain 
	$$
	d_{k+1,k+1}^\ell = \Psi_{k-1} \cdot \alpha + \Psi_{k-2} \cdot \alpha\cdot \mathbbm{1} + \cdots  + \Psi_0 \cdot \alpha \mathbbm{1}^{k-2} \cdot \mathbbm{1} + \mu\cdot \mathbbm{1}^{k-1} \cdot \mathbbm{1},
	$$
	which concludes the proof. 
\end{proof}

\begin{figure}[htb]
\begin{center}
\includegraphics[width=3.2in]{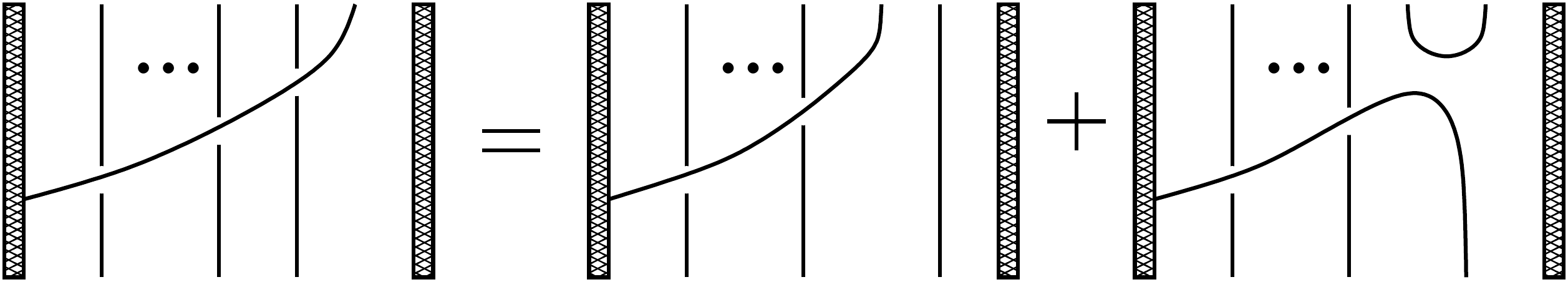}
\end{center}
\caption{Skein using $\Psi_n$}
\label{Psiskein}
\end{figure}

We now define 
the sets $\Gamma^{(n,n+2)}$ and $\Lambda^{(m,m)}$ of diagrams representing maps 
 that will be used to decompose the operators $\Psi_n$. In general the double superscripts
 ${M}^{(m,n)}$ indicates that the set includes maps $V^{\otimes m} \rightarrow V^{\otimes n}$.
We set (see Figure~\ref{GammaLambda}):
$$ \Gamma^{(0,2)} = \{ \beta \}, \quad   \Gamma^{(1,3)} = \{ \mathbbm{1} \beta, \beta \xi \} , \quad 
\Lambda^{(1,1)}=\{ \xi \}, \quad
  \Lambda^{(2,2)} = \{ \beta \alpha , \xi^2  \} 
$$
 and   inductively define 
\begin{eqnarray}
\quad	\Gamma^{(n,n+2)} &=& \Gamma^{(n-1,n+1)}\cdot \xi \cup \bigcup_{m=2}^{n-2} \Gamma^{(n-m,n-m+2)} \cdot \alpha\mathbbm{1}^{m-2}\beta\cup\{\mathbbm{1}^n\beta \}, \label{eqn1} \\
\quad	\Lambda^{(n,n)} &=& \Lambda^{(n-1,n-1)}\cdot \xi \cup \bigcup_{m=2}^{n-2} \Lambda^{(n-m,n-m)} \cdot \alpha\mathbbm{1}^{m-2}\beta\cup\{\alpha\mathbbm{1}^{n-2}\beta\}.  \label{eqn12}
	\end{eqnarray}
	
\begin{remark}
	{\rm 
It can be seen that the unions defining 
$\Gamma^{(n,n+2)}$ and $\Lambda^{(n,n)}$ are
in fact disjoint. 	
}
\end{remark}

\begin{figure}[htb]
\begin{center}
\includegraphics[width=2.5in]{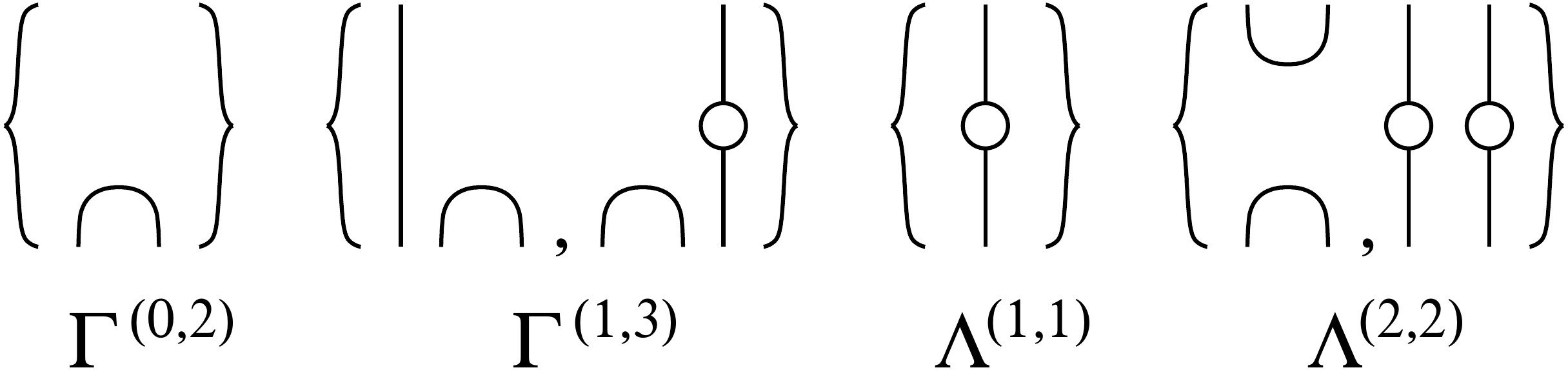}
\end{center}
\caption{$\Gamma$s and $\Lambda$s}
\label{GammaLambda}
\end{figure}

\begin{lemma}\label{lem:psimap}
	For all $n\in\N$ the following equation holds:
	$$
	\Psi_n = \sum_{\psi\in \Gamma^{(n-1,n+1)}} \mu\cdot \psi + \sum_{\phi\in \Lambda^{(n,n)}} \lambda\cdot \phi.
	$$
\end{lemma}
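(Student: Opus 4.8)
The plan is to prove Lemma~\ref{lem:psimap} by induction on $n$, running the induction so that it mirrors the inductive definitions~(\ref{eqn1}) and~(\ref{eqn12}) of $\Gamma^{(n,n+2)}$ and $\Lambda^{(n,n)}$. The base cases $n=0,1,2$ are checked by direct computation: for $n=0$ one reads off $\Psi_0=\lambda=\lambda\cdot\mathbbm{1}$ from $\Lambda^{(0,0)}=\{\mathbbm{1}\}$ and $\Gamma^{(-1,1)}=\emptyset$, while for $n=1,2$ one expands the one or two crossings in $\Psi_n$ with the skein relation $R=I+\beta\alpha$ and matches the result against $\Gamma^{(0,2)}=\{\beta\}$, $\Gamma^{(1,3)}=\{\mathbbm{1}\beta,\beta\xi\}$, $\Lambda^{(1,1)}=\{\xi\}$ and $\Lambda^{(2,2)}=\{\beta\alpha,\xi^2\}$.

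For the inductive step I would resolve the \emph{topmost} crossing of $\Psi_n$, i.e.\ the copy of $R$ lying immediately below the action $\mu$ and adjacent to the wall, using $R=I+\beta\alpha$. Writing $Q$ for the remaining lower part of the diagram (the rest of the leftward sweep of the cap together with the bottom $\beta$), this splits $\Psi_n$ into the \emph{identity summand} $(\mu\otimes\mathbbm{1}^{n+1})\circ Q$ and the \emph{cup-cap summand} $(\mu\otimes\mathbbm{1}^{n+1})\circ(\beta\alpha)\circ Q$, the latter with $\beta\alpha$ inserted at the first two positions. In the identity summand the sweep inside $Q$ never reaches the first strand, so $\mu$ absorbs the original leftmost input strand trivially and the rest of the diagram is exactly the ``capped sweep'' $V^{\otimes(n-1)}\to V^{\otimes(n+1)}$ on the other $n-1$ strands (create a cap at the right, sweep one leg to the front). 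An auxiliary induction against~(\ref{eqn1}) then identifies the full skein expansion of this capped sweep with $\sum_{\psi\in\Gamma^{(n-1,n+1)}}\psi$: the resolutions in which the relevant crossing stays $I$, becomes $\beta\alpha$ at an interior position, or becomes $\beta\alpha$ at the last position reproduce respectively the pieces $\Gamma^{(n-2,n)}\cdot\xi$, the middle union, and $\{\mathbbm{1}^{n-1}\beta\}$, with the zig-zag identity $(\mathbbm{1}\otimes\beta)(\alpha\otimes\mathbbm{1})=\xi$ producing the $\cdot\,\xi$ terms. This yields the first sum $\sum_{\Gamma^{(n-1,n+1)}}\mu\cdot\psi$.

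In the cup-cap summand, $\alpha$ cups the original leftmost strand against the swept cap-leg, and $\beta$ creates a fresh pair whose left leg is at once absorbed by $\mu$. The decisive simplification is the turnback identity $(\mu\otimes\mathbbm{1})\circ(\mathbbm{1}_M\otimes\beta)=\lambda$, which follows directly from the trivial action together with the definitions of $\beta$ and $\lambda_\ell$ and is the counterpart of Lemma~\ref{lem:loop}(3): it converts this $\mu$-absorption of a cap into a $\lambda$-emission. Factoring out the resulting $\lambda$, the leftover is a map $V^{\otimes n}\to V^{\otimes n}$ whose skein expansion, by the parallel auxiliary induction against~(\ref{eqn12}) (again using the zig-zag identities and Lemma~\ref{lem:loop}(2) to collapse $\xi\zeta=\zeta\xi=y^2\mathbbm{1}$), is precisely $\sum_{\phi\in\Lambda^{(n,n)}}\phi$. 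This gives the second sum $\sum_{\Lambda^{(n,n)}}\lambda\cdot\phi$, and the two summands together are the asserted decomposition of $\Psi_n$.

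The hard part is the bookkeeping in the two auxiliary inductions: one must match the skein resolutions of the capped sweep, and of its $\alpha$-contracted turnback variant, term-by-term with the recursively defined sets, being careful that these are the level $n-1$ object $\Gamma^{(n-1,n+1)}$ and the level $n$ object $\Lambda^{(n,n)}$ respectively — note in particular that the two recursions truncate their unions at the different bounds $n-3$ and $n-2$, reflecting that $\mu$ consumes an input strand in the first case but $\lambda$ emits a new one in the second. Keeping the powers of $\xi$ and the positions of the interior cups straight, so that each application of the snake relation lands in the correct summand of~(\ref{eqn1}) or~(\ref{eqn12}), and invoking the disjointness of the defining unions so that no diagram is double-counted, is the delicate step; everything else is routine diagrammatic manipulation justified by the skein relation and Lemma~\ref{lem:loop}.
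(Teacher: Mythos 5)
Your opening move is correct, and it is genuinely different from the paper's. Resolving the crossing adjacent to $\mu$ and using the turnback identity $(\mu\otimes\mathbbm{1})\circ(\mathbbm{1}_M\otimes\beta)=\lambda$ (which does hold for the trivial action, and is exactly how $\lambda_\ell$ is defined via skein in Figure~\ref{lambda}) correctly splits $\Psi_n$ as $\mu\cdot\Sigma_{n-1}+\lambda\cdot T_n$, where $\Sigma_{n-1}$ is the capped sweep on $n-1$ strands and $T_n$ is its $\alpha$-contracted variant; and the two auxiliary identities you then need (full skein expansion of $\Sigma_{n-1}$ equals $\sum_{\psi\in\Gamma^{(n-1,n+1)}}\psi$, of $T_n$ equals $\sum_{\phi\in\Lambda^{(n,n)}}\phi$) are in fact true. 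The paper instead resolves the crossing adjacent to $\beta$, obtaining $\Psi_{k+1}=d^\ell_{k+1,k+1}\cdot\beta+\Psi_k\cdot\xi$, then expands $d^\ell_{k+1,k+1}$ by the already-proved Lemma~\ref{lem:leftdiff} into $\sum_m\Psi_{k+1-m}\cdot\alpha\mathbbm{1}^{m-2}\beta$ plus $\mu\cdot\mathbbm{1}^{k}\beta$, and closes a single \emph{strong} induction in which the hypothesis is applied to all lower $\Psi_j$ simultaneously; the recursions (\ref{eqn1}) and (\ref{eqn12}) then match the resulting terms verbatim. Because your two summands carry no wall, you cannot reuse Lemma~\ref{lem:leftdiff} and must instead re-derive a wall-free analogue of it inside each auxiliary induction; that is more work, but not illegitimate.

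The genuine gap is in the bookkeeping you yourself single out as the delicate step: your matching of resolutions to the pieces of (\ref{eqn1}) is inverted, and as stated the auxiliary induction does not close. You assign the identity resolution to the piece $\Gamma^{(n-2,n)}\cdot\xi$ and the cup--cap resolution ``at the last position'' to $\{\mathbbm{1}^{n-1}\beta\}$. It must be the other way around: a factor of $\xi$ can only arise from the zig-zag $(\alpha\otimes\mathbbm{1})(\mathbbm{1}\otimes\beta)$, i.e.\ from resolving the crossing adjacent to the cap as $\beta\alpha$, which cups the cap's leg with the last strand and lets the new cap continue the sweep, producing $\Sigma_{n-2}\cdot\xi$ and hence, by induction, $\Gamma^{(n-2,n)}\cdot\xi$. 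Resolving that same crossing as the identity leaves the original $\beta$ intact as the last tensor factor, so \emph{every} term it produces ends in $\cdot\,\beta$: the all-identity resolution gives $\{\mathbbm{1}^{n-1}\beta\}$, and the remaining resolutions give the middle union, but only after a nested induction on the resulting uncapped sweep (this nested induction is precisely the wall-free version of Lemma~\ref{lem:leftdiff} mentioned above, since after the identity resolution the strand being swept is no longer the cap's leg but the last ambient strand). The same correction is needed for $T_n$ against (\ref{eqn12}); also, your appeal to Lemma~\ref{lem:loop}(2) is spurious here, since $\zeta$ only enters the mirror-image right-differential expansions. With the matching fixed and the nested induction supplied, your argument goes through and yields the lemma.
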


\begin{proof}
Recall that we use abbreviation $\mu_\ell=\mu$ and $\lambda_\ell = \lambda$. 
The proof utilizes induction and Lemma~\ref{lem:leftdiff}. A direct inspection shows that the equation holds for $n=1,2$. 
Indeed by using the skein relation we  have 
\begin{eqnarray*}
\Psi_1 &=& \mu \beta + \lambda \xi, \\
\Psi_2 &=&  \mu \mathbbm{1}\beta + \mu\beta\xi+ \lambda \alpha\beta + \lambda \xi \xi, 
\end{eqnarray*}
and it follows that the statement holds true for $n=1, 2$. Let us now assume that $\Psi_n$ is of the form given in the statement for all $2\leq n\leq k$ and let $n=k+1$. Applying the skein relation once to $\Psi_{k+1}$ we obtain
$$
\Psi_{k+1} = d_{k+1, k+1}^\ell\cdot \beta+ \Psi_k\cdot \xi 
$$ 
See Figure~\ref{Psiskein2} for the diagrams.

\begin{figure}[htb]
\begin{center}
\includegraphics[width=3.2in]{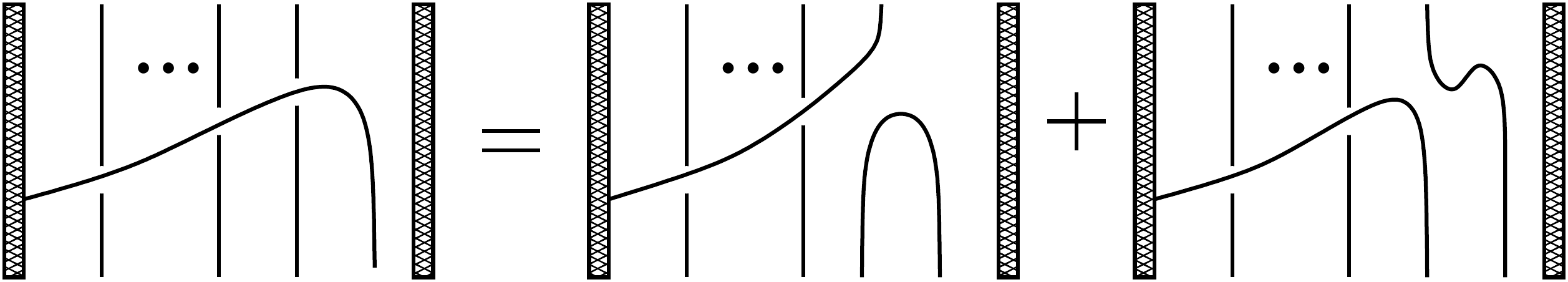}
\end{center}
\caption{Skein for $\Psi_n$}
\label{Psiskein2}
\end{figure}

Using Lemma~\ref{lem:leftdiff} we can rewrite the previous equation as 
$$
\Psi_{k+1} =  \Psi_k\cdot \xi + \Psi_{k-1}\cdot \alpha\beta + \Psi_{k-2}\cdot \alpha\mathbbm{1}\beta +\cdots +\Psi_0\cdot \alpha\mathbbm{1}^{k-1}\beta + \lambda\cdot \mathbbm{1}^k\beta. 
$$
We now apply the inductive hypothesis to obtain
\begin{eqnarray*}
	\lefteqn{\Psi_k}\ \ &=& \sum_{\psi\in \Gamma^{(k-1,k+1)}} \mu\cdot \psi \cdot  \xi+ \sum_{\phi\in \Lambda^{(k,k)}} \lambda\cdot \phi\cdot  \xi \\
	&& \vdots\\
	&& +  \sum_{\psi\in \Gamma^{(k-m-1,k-m+1)}} \mu\cdot \psi \cdot \alpha\mathbbm{1}^{m-2}\beta+ \sum_{\phi\in \Lambda^{(k-m,k-m)}} \lambda\cdot \phi\cdot \alpha\mathbbm{1}^{m-2}\beta\\
	&& \vdots\\
	&&+ \lambda\cdot \alpha\mathbbm{1}^{k-1}\beta + \mu\cdot \mathbbm{1}^k\beta.
	\end{eqnarray*}
Using the inductive definition of $ \Gamma^{(n,n+2)}$ and $ \Lambda^{(m,m)}$ we conclude that 
$$
\Psi_{k+1} = \sum_{\psi\in \Gamma^{(k,k+2)}} \mu\cdot \psi + \sum_{\phi\in \Lambda^{(k+1,k+1)}} \lambda\cdot \phi,
$$
which concludes the proof of the lemma.
\end{proof}

\begin{lemma}\label{lem:leftpsi}
	The left differential $d_n^\ell$ can be written in terms of $\Psi_i$ 
	as follows:
	$$
	d_n^\ell = \begin{cases}\sum_{i=1}^{k} \Psi_{2(i-1)} \cdot \alpha \cdot \mathbbm{1}^{n-2i}\ \quad {\rm for} \quad  n = 2k\\[8pt]
	- 
	\mu \mathbbm{1}^{n-1} - \sum_{j=1}^{k} \Psi_{2j-1}\cdot \alpha \cdot \mathbbm{1}^{n-2j+1}\ \quad {\rm for} \quad n = 2k+1.
	\end{cases}
	$$
\end{lemma}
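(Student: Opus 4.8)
The plan is to reduce the full left differential to the top differentials $d_{i,i}^\ell$ already treated in Lemma~\ref{lem:leftdiff}, and then collapse the resulting alternating sum. The starting point is the structural fact that the curtain map $d_{i,n}^\ell$ interacts only with the left wall and the first $i$ copies of $V$, leaving the remaining $n-i$ copies (and the right module) untouched; this is visible in Figure~\ref{curtain} and can be read off directly from the compositional definition, since every crossing in $d_{i,n}^\ell$ occupies one of the first $i$ positions. Consequently $d_{i,n}^\ell = d_{i,i}^\ell \otimes \mathbbm{1}^{n-i}$. Applying Lemma~\ref{lem:leftdiff} with $n$ replaced by $i$ and absorbing the trailing identities then yields, for every $1\le i\le n$,
$$ d_{i,n}^\ell = \sum_{m=2}^{i} \Psi_{i-m}\cdot\alpha\cdot\mathbbm{1}^{\,n-i+m-2} + \mu\cdot\mathbbm{1}^{n-1}. $$

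Next I would assemble $d_n^\ell = \sum_{i=1}^{n}(-1)^i d_{i,n}^\ell$, the signed left contribution appearing in $d_n = \sum_{i=1}^{n}(-1)^i[d_{i,n}^\ell - d_{i,n}^r]$, and reindex the double sum by the subscript of $\Psi$. Writing $p = i-m$, the trailing power is $n-i+m-2 = n-p-2$, which depends only on $p$; thus every instance of $\Psi_p$ carries the same tail $\alpha\cdot\mathbbm{1}^{\,n-p-2}$. Collecting, the coefficient of $\Psi_p\cdot\alpha\cdot\mathbbm{1}^{\,n-p-2}$ is $\sum_{i=p+2}^{n}(-1)^i$ and the coefficient of $\mu\cdot\mathbbm{1}^{n-1}$ is $\sum_{i=1}^{n}(-1)^i$.

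The heart of the argument is the evaluation of these alternating sums. A run of consecutive signs $\sum_{i=a}^{b}(-1)^i$ telescopes to $0$ when it has even length and to $(-1)^a$ when it has odd length, so $\sum_{i=p+2}^{n}(-1)^i$ vanishes unless $n\equiv p \pmod 2$, in which case it equals $(-1)^p$, while $\sum_{i=1}^{n}(-1)^i$ equals $0$ for even $n$ and $-1$ for odd $n$. This parity filter is exactly what produces the dichotomy of the statement. For even $n$ only even $p$ survive, each with coefficient $+1$ and with no surviving $\mu$ term; parametrizing $p = 2(i-1)$ reproduces $\sum_{i=1}^{k}\Psi_{2(i-1)}\cdot\alpha\cdot\mathbbm{1}^{\,n-2i}$. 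For odd $n$ only odd $p$ survive, each with coefficient $-1$, and together with the surviving term $-\mu\cdot\mathbbm{1}^{n-1}$ the parametrization $p = 2j-1$ delivers the odd branch of the claimed formula.

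I expect the principal difficulties to be bookkeeping rather than conceptual. One must justify the factorization $d_{i,n}^\ell = d_{i,i}^\ell\otimes\mathbbm{1}^{n-i}$ cleanly from the definition, keeping track of how the module strands pass through, and then verify that the range constraints $p\ge 0$ and $p\le n-2$ together with the parity constraint match precisely the ranges $1\le i\le k$ and $1\le j\le k$ in the two cases. The alternating-sum evaluation is elementary, but it is the step that generates the essential even/odd asymmetry of the conclusion, so it merits the most careful treatment.
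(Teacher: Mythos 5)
Your proposal is correct and follows the same overall strategy as the paper's proof: both start from $d_n^\ell=\sum_{i=1}^n(-1)^i\,d_{i,i}^\ell\cdot\mathbbm{1}^{n-i}$ and collapse the alternating sum by parity. The difference is organizational. The paper never expands each $d_{i,i}^\ell$ in full: it groups consecutive summands in pairs and applies the one-step skein identity $d_{2i,2i}^\ell=d_{2i-1,2i-1}^\ell\cdot\mathbbm{1}+\Psi_{2i-2}\cdot\alpha$ (the recursion inside the proof of Lemma~\ref{lem:leftdiff}), so each pair telescopes to the single term $\Psi_{2i-2}\cdot\alpha\cdot\mathbbm{1}^{n-2i}$. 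You instead substitute the closed form of Lemma~\ref{lem:leftdiff} everywhere, reindex by $p=i-m$, and evaluate the coefficients $\sum_{i=p+2}^{n}(-1)^i$ and $\sum_{i=1}^{n}(-1)^i$. The two computations are equivalent; the paper's pairing is leaner, while your coefficient count makes the parity filter, and hence the even/odd dichotomy, completely explicit.

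One point you should not gloss over: your odd-case derivation yields $d_n^\ell=-\mu\,\mathbbm{1}^{n-1}-\sum_{j=1}^{k}\Psi_{2j-1}\cdot\alpha\cdot\mathbbm{1}^{n-2j-1}$, with tail exponent $n-p-2=n-2j-1$, whereas the statement as printed has $\mathbbm{1}^{n-2j+1}$; these do not agree, yet you assert that your parametrization delivers the odd branch of the claimed formula. In fact your exponent is the correct one: the printed version is dimensionally impossible (the source of $\Psi_{2j-1}\cdot\alpha\cdot\mathbbm{1}^{n-2j+1}$ would be $V^{\otimes(n+2)}$ rather than $V^{\otimes n}$), and when the paper applies the lemma in the proof of Theorem~\ref{thm:diff} for $n=2s+1$ it writes $\Psi_{2j-1}\alpha\,\mathbbm{1}^{2s-2j}$, i.e.\ exponent $n-2j-1$. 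So the lemma statement contains a typo which your argument silently corrects; a complete write-up should flag the discrepancy explicitly rather than claim literal agreement with the printed formula.
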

\begin{proof}
	Suppose first that $n$ is even and let $n = 2k$ for some $k$. By definition we have
	$$
	d^\ell_n = \sum_{i=1}^n (-1)^i d_{i,i}^\ell \cdot \mathbbm{1}^{n-i}.
	$$
	Since $n$ is even, we can group the terms $d_{i,i}^\ell\cdot \mathbbm{1}^{n-i}$ in pairs of consecutive summands $2i$ and $2i+1$ for $i=0,\ldots,k$. Applying the skein relation
	to the 
	$(2i)^{\rm th}$ 
	term, 
we obtain that, for all $i$, 
	 $d_{2i, 2i}^\ell = d_{2i-1, 2i-1}^\ell + \Psi_{2i-2}\cdot \alpha$. Here we recall that $\Psi_0=\lambda$.   
	Putting all terms  of the left-hand side of $- d_{2i-1, 2i-1}^\ell  + d_{2i, 2i}^\ell = \Psi_{2i-2}\cdot \alpha$ together and using the fact that consecutive terms appear with opposite signs, 
we complete the proof for the case $n$ even. If $n = 2k +1$ is odd, we proceed similarly by grouping in pairs the terms $d_{2j,2j}^\ell$ and $d_{2j+1,2j+1}^\ell$ for $j=1,\ldots, k$. 
\end{proof}

\noindent
{\it Proof of Theorem~\ref{thm:diff}}. 
We first consider the case $n = 2s$ for some $s$. 
Since inn Lemma~\ref{lem:triviald2} below we show that $d_2^\ell = 0$, we assume that $s\geq 2$. 
From Lemma~\ref{lem:leftpsi} we have
$$
d^\ell_n = \sum_{i=1}^{s} \Psi_{2(i-1)} \cdot \alpha \cdot \mathbbm{1}^{n-2i}. 
$$
Using Lemma~\ref{lem:psimap} we can rewrite it as
$$
d^\ell_n = \sum_{i=2}^s \left(\sum_{\psi\in \Gamma^{(2i-3,2i-1)}} \mu\cdot \psi + \sum_{\phi\in \Lambda^{(2i-2,2i-2)}} \lambda\cdot \phi\right) \cdot \alpha \mathbbm{1}^{n-2i}.
$$ 
To complete the proof of the first assertion with even $n$, it would suffice to show that for each $i = 2, \ldots , s$ 
$$
\sum_{\psi\in \Gamma^{(2i-3,2i-1)}} \mu\cdot \psi \cdot \alpha + \sum_{\phi\in \Lambda^{(2i-2,2i-2)}} \lambda\cdot \phi\cdot \alpha = \sum_{S'(n)} g'_{i_0} g_{i_1}^{k(1)} \cdots g_{i_h}^{k(h)},
$$
where, noting $2k=n=2i$, 
 $$S'(n) =\{ (i_0,i_1,\ldots , i_h;k(1),\ldots ,  k(h))\ |\ i_h\neq 1,\ i_0+ i_1^{k(1)}+\cdots + i_h^{k(h)} = 2i\}.$$
 Since $n-2i = 2s -2i$ is even for all $i = 2,\ldots , s$, 
 $g'_1 = \mu$ by definition and $g'_{i_0}$ contains a factor of $\lambda$ for each $i_0 \geq 2$, the last equality is a consequence of the two set-theoretic equalities:
 \begin{eqnarray*}
 \lefteqn{ 	 \Gamma^{(\ell,\ell+2)}\cdot \alpha  }\\
 & =\ &  \{g_{i_1}^{k(1)}\cdots g_{i_h}^{k(h)}\ |\ i_h\neq 1,\ i_1^{k(1)}+\cdots + i_h^{k(h)} = \ell+2 \}  \\
 &=: & S'_1(\ell),\\
  \lefteqn{ 	 	\lambda\cdot \Lambda^{(\ell,\ell)}\cdot \alpha } \\
&=\ &    \{g'_{i_0}\cdot g_{i_1}^{k(1)}\cdots g_{i_h}^{k(h)}\ |\ i_h\neq 1,\ i_0\neq 1,\ i_0+ i_1^{k(1)}+\cdots + i_h^{k(h)} = \ell + 2 \} \\ & =: & S'_2(\ell),
 	\end{eqnarray*}
 for all $n$, where $g_{i_1}^{k(1)}\cdots g_{i_h}^{k(h)}$ can be empty in the second last line. 
   We therefore proceed to prove the first equality by induction. First observe that by definition, for $\ell=1$, we have 
 $$ 
 \Gamma^{(1,3)} = \{\mathbbm{1}\beta,\beta\xi \},
 $$
 from which 
 $$
   \Gamma^{(1,3)}\cdot \alpha = \{\mathbbm{1}\beta\alpha,\beta\xi\alpha \}.
 $$
 It is easy to see by direct inspection that 
 $$
 \{g_{i_1}^{k(1)}\cdots g_{i_h}^{k(h)}\ |\ i_h\neq 1,\ i_1^{k(1)}+\cdots +i_h^{k(h)} = 3 \} = \{g_1\cdot g_2, g_3\} = \{\mathbbm{1}\beta\alpha,\beta\xi \alpha \}.
 $$
 So the basis of induction holds true. Let us now assume the the equality holds for all $\ell$ smaller than or equal to $r$, and suppose $\ell = r+1$. We want to show the inclusion 
 $$
  \Gamma^{(r+1,r+3)}\cdot \alpha \subset S'_1(r+1).
 $$
 Let $\psi\in  \Gamma^{(r+1,r+3)}$. From the Equality~\ref{eqn1}, we distinguish three cases
 $$
 \psi \in \begin{cases}
  \Gamma^{(r,r+2)} \cdot \xi \\[5pt]
 \bigsqcup_{m=2}^{r-1} \Gamma^{(r+1-m,r+3-m)}\cdot \alpha\mathbbm{1}^{m-2}\beta\\[5pt]
 \{\mathbbm{1}^{r+1}\beta \}.
 \end{cases}
 $$
 In the last case it is clear that $\psi\alpha = \mathbbm{1}^{r+1}\beta\alpha \in S'_1(r+1)$. In the second case, $\psi$ is equal to $\psi'\cdot \alpha\mathbbm{1}^{m-2}\beta$, for $\psi'\in \Gamma^{(r+1-m,r+3-m)}$ for some $m = 2,\ldots,  r-1$. 
 Then 
 $\psi\cdot \alpha = \psi' \cdot \alpha \cdot \mathbbm{1}^{m-2}\beta\alpha \in S'_1(r+1)$
  since $\mathbbm{1}^{m-2}\beta\alpha\in S'_1(m)$, $\psi' \cdot \alpha\in S'_1(r-m+1)$ by inductive hypothesis and $S'_1(n)\cdot S'_1(m) \subset S'_1(n+m)$ for all $n,m$. 
  
  Lastly, if $\psi\in \Gamma^{(r,r+2)}\cdot   \xi $ we can write $\psi\cdot \alpha = p_1\cdot   \xi \cdot \alpha$ for some $p_1\in  \Gamma^{(r,r+2)}$. We again distinguish three subcases depending on which 
  of the three cases $p_1$ belongs to. 
     As before, we see that if $p_1$ is not in $ \Gamma^{(r-1,r+1)}\cdot   \xi $ we easily have that $\psi\cdot \alpha\in S'_1(r+1)$. Otherwise we can write $\psi\cdot \alpha = p_2\cdot   \xi   \xi  \cdot \alpha$. So proceeding, at each step we have that either $\psi\cdot \alpha \in S'_1(r+1)$, or we decompose $\psi\cdot \alpha$ as a product of type $p_k\cdot   \xi ^{k}\alpha$, with $p_k\in \Gamma^{(1,3)} = \{\mathbbm{1}\beta,\beta  \xi  \}$. Either way $\psi\cdot \alpha \in S'_1(r+1)$ and we have proved that 
 $$
  \Gamma^{(r+1,r+3)}\cdot \alpha \subset S'_1(r+1).
 $$
 We now show the opposite inclusion. Let $g = g_{i_1}^{k(1)}\cdots g_{i_h}^{k(h)} \in S'_1(r+1)$ with $i_h \neq 1$. 
 We can therefore write $g = g_{i_1}^{k(1)}\cdots g_{i_{h-1}}^{k(h-1)}\beta  \xi ^{i_h-2}\alpha \cdots \beta  \xi ^{i_h-2}\alpha$, 
 where $i_h-2$ 
 can be possibly zero, and $\beta  \xi ^{i_h-2} \alpha$ appears $k(h)$ times. 
 We also abbreviate center dots for brevity, such as $\beta \alpha$ for $\beta \cdot \alpha$, with the understanding that these sequences denote the horizontal concatinations instead of compositions of maps.
 If $i_j =1$ for all $j = 1,\ldots , h-1$, we have that 
 $$g = \mathbbm{1}^{k(1)+\cdots + k(h-1)}\cdot \beta\cdot   \xi ^{i_h-2}\beta\alpha  \xi ^{i_h-2}\beta\alpha\cdots \beta\alpha  \xi ^{i_h-2}\cdot\alpha\in  \Gamma^{(r+1,r+3)}\cdot \alpha, $$
 since it is easily seen that 
 $$\mathbbm{1}^{k(1)+\cdots + k(h-1)}\cdot \beta\cdot   \xi ^{i_h-2}\beta\alpha  \xi ^{i_h-2}\beta\alpha\cdots \beta\alpha  \xi ^{i_h-2}\in  \Gamma^{(r+1,r+3)}. $$
  Otherwise let $j$ be the largest index for which $i_j \neq 1$. Then we have 
  $$g = g_{i_1}^{k(1)}\cdots g_{i_{j-1}}^{k(j-1)}\cdot \beta  \xi ^{i_j-2}\alpha \cdots \beta  \xi ^{i_j-2}\alpha \cdot \mathbbm{1}^{i_{j+1}+\cdots+i_{h-1}} \cdot \beta  \xi ^{i_h-2}\alpha. $$
  By the induction hypothesis we can write 
 $$
 g = p\cdot \alpha \cdot \beta  \xi ^{i_j-2}\alpha\mathbbm{1}^{i_{j+1}+\cdots+i_{h-1}}\cdot \beta  \xi ^{i_h-2}\alpha,
 $$
 for $p\in  \Gamma^{(m,m+2)}$ for some $m\leq r$. Since 
 $$p\cdot \alpha \cdot \beta  \xi ^{i_j-2}\alpha\mathbbm{1}^{i_{j+1}+\cdots+i_{h-1}}\cdot \beta  \xi ^{i_h-2}\in  \Gamma^{(r+1,r+3)}, $$
 we conclude that $g \in  \Gamma^{(r+1,r+3)}\cdot \alpha$. Therefore $ \Gamma^{(\ell,\ell+2)}\cdot \alpha = S'_1(\ell)$ for all even $n$. To prove that $ \Lambda^{(\ell,\ell)}\cdot \alpha = S'_2(\ell)$, we again proceed by induction. The proof is similar to the case of $\Gamma^{(n,n+2)}$. The base of induction is holds true since we have
 $$
 S'_2(1) = \{\lambda \xi\alpha \},
 $$
 and 
 $$
 \Lambda^{(1,1)} = \{\xi \}.
 $$
 Let us know suppose that the equality $ \Lambda^{(\ell,\ell)}\cdot \alpha = S'_2(\ell)$ holds for all $2\leq \ell\leq r$. We want to show $ \Lambda^{(r+1,r+1)}\cdot \alpha = S'_2(r+1)$. Consider again three different cases
 $$
 \phi\in \begin{cases}
 \Lambda^{(r,r)}\cdot \xi\\
\bigsqcup_{m=2}^r \Lambda^{(r-m,r-m)}\cdot \alpha\mathbbm{1}^{m-2}\beta\\
 \{\alpha\mathbbm{1}^{r-1}\beta \}.
 \end{cases}
 $$
 In the first case, $\phi = q\cdot \xi$ for some $q\in \Lambda^{r,r}$ and we can proceed backward as for the analogous proof for $\Gamma^{(r+1,r+3)}$ so that at each step we either have $\phi\in S'_2(r+1)$ or we can rewrite $\phi = \tilde{q}\cdot \xi \cdots \xi$, where the product of $\xi$ is $r$ times and $\tilde{q} \in \Lambda^{1,1}$. It follows that in the first case $\lambda \phi \alpha\in S'_2(r+1)$. In the second case we have $\phi = q\cdot \alpha\mathbbm{1}^{r-m}\beta$, for some $q \in \Lambda^{(r-m,r-m)}$. Since $q\cdot \alpha\in S'_2(t)$ for some $t$ by induction, and $\mathbbm{1}^{r-m}\alpha\beta$ is of type $g_1^d\cdot g_2$, this case follows as well.  In the third case, $\lambda \alpha\mathbbm{1}^{r-1}\beta \cdot \alpha = \lambda \alpha \mathbbm{1}^{r-1}\alpha\beta \in S'_2(r+1)$.  It follows that $d_n^\ell$ decomposes as in the statement of the theorem, when $n$ is even. The case $n$ odd is similar. Let $n = 2s+1$ for some $s$, then using Lemma~\ref{lem:leftpsi}, odd case, it holds
 $$
 d^\ell_{2s+1} = -\mu \mathbbm{1}^{2s} - \sum_{j=1}^2 \Psi_{2j-1}\alpha \mathbbm{1}^{2s-2j}.
 $$
 Applying Lemma~\ref{lem:psimap} we obtain
 $$
 d^\ell_{2s+1} = -\mu \mathbbm{1}^{2s} - \sum_{j=1}^s  \left( \sum_{\psi\in\Gamma^{(2j-2,2j)}} \mu\cdot \psi + \sum_{\phi\in\Lambda^{(2j-1,2j-1)}}\lambda\cdot \phi \right)\cdot \alpha\mathbbm{1}^{2s-2j}.
 $$
 Since $\Gamma^{(2j-2,2j)}$ has even exponents for all $j$'s, using the recursive definition of $\Gamma$'s it follows that there is no term of type $\mu\cdot \mathbbm{1}^d$ in the sum $\sum_{\psi\in\Gamma^{(2j-2,2j)}} \mu\cdot \psi$. So it is enough to show that for each $j = 1,\ldots , s$ we have
 $$
 \left( \sum_{\psi\in\Gamma^{(2j-2,2j)}} \mu\cdot \psi + \sum_{\phi\in\Lambda^{(2j-1,2j-1)}}\lambda\cdot \phi \right)\cdot \alpha = \sum_{S''_2(n)} g'_{i_0}g_{i_1}^{k(1)}\cdots g_{i_h}^{k(h)},
 $$
 where the sum runs over all tuples in 
 $$
 S''_2(n) := \{(i_0,i_1,\ldots, i_h;k(1) , \ldots,k(h) \mid i_0 + i_1^{k(h)} +\cdots +  i_h^{k(h)} = 2j+1 \}. 
 $$
 Since $n-2j = 2s+1-2j$ is odd for all $j$ and $g'_{i_0}$ contains a factor of $\lambda$ for all $i_0 \geq 2$ it follows that it is enough to prove the set theoretic equalities 
 \begin{eqnarray*}
 \Gamma^{(d,d+2)} &=&S'_1(d)\\
 \lambda\cdot \Lambda^{(d,d)} \alpha &=& S'_2(d).
 \end{eqnarray*}
These have already been proved above and the proof for $n = 2s+1$ is complete as well. 

 It is easy to see that mirroring Lemmas~\ref{lem:leftdiff},~\ref{lem:psimap},~\ref{lem:leftpsi} with respect to the $y$-axis, we obtain a decomposition of $ d_n^r$ with right (co)action on ${k}$ replacing the left (co)action on ${k}$, and $\zeta$ 
 instead of $  \xi $. So the formula for $ d_n^\ell$ just proved can be easily adapted for $ d_n^r$. Putting the two equations together and distinguishing the cases $n$ odd and even, we conclude the proof of the theorem. \qed 

\section{Proof of Proposition~\ref{lem:bj} continued}\label{apdx:higher}

\flushleft 

In this section, we provide proofs of the other cases. 
For odd $n$, the following terms result in the non-zero coefficient of $e_{1,0}$ in the image, for $i=1, \ldots, (n-1)/2$:
\begin{eqnarray*}
( g_1' g_1^{n-1} ) ( e_{1, 0} ) &=& 1   e_{1, 0} \\
( g_1' g_1^{n-1} ) (  e_{1,1}) &=& 1  e_{1, 0} \\
(g_{2i+1}'  g_1^{n-2i - 1 } ) ( e_{1, 2i}  )&=& (-y) (y^{-1} ) y^{2( 2i-1) }  e_{1, 0}  \quad = \quad  - y^{ 4i-2 }  e_{1, 0} \\
(g_{2i+1}'  g_1^{n-2i - 1 } ) (  e_{1, 2i+1} ) &=& (-y) ( -y ) y^{2( 2i-1) } e_{1, 0}  \quad = \quad  y^{4i}   e_{1, 0}\\
(h_1^{2i-2} h_{n-2i+2}' ) ( e_{1, 2i - 1 } ) &=&  ( y^{-1} )y  e_{1, 0} \quad = \quad 1  e_{1, 0}  \\
 (h_1^{2i-2 } h_{n-2i +2  }' ) ( e_{1, 2i})  &=&  (-y  )y e_{1, 0}  \quad = \quad -y^2  e_{1, 0}\\
(h_1^{n-1} h_1' ) ( e_{1, 0} )&=& 1  e_{1, 0}\\
(h_1^{n-1} h_1' ) ( e_{1,n} ) &=& 1  e_{1, 0} .
\end{eqnarray*} 

From these we compute
\begin{eqnarray*}
d_n(e_{1,0}) &=& ( -  g_1' g_1^{n-1}+ h_1^{n-1} h_1'  ) ( e_{1, 0} ) \quad =\quad (-1+1 ) e_{1, 0}\quad =\quad 0 \\
d_n(e_{1,1}) &=& ( - g_1' g_1^{n-1} - g_{3}'  g_1^{n-3} + h_{n}' ) ( e_{1, 1} ) \quad =\quad  (-1+y^2  +1 ) e_{1, 0}\quad =\quad  y^2  e_{1, 0}\\
d_n(e_{1,2i-1}) &=& ( - g_{2i-1}'  g_1^{n-2i + 1 } +h_1^{2i-2} h_{n-2i+2}'  ) (e_{1,2i-1}) 
 \quad =\quad  (- y^{4i-4}+ 1 )  e_{1, 0}\\
d_n(e_{1,2i}) &=&  (- g_{2i+1}'  g_1^{n-2i -1 } +  h_1^{2i-2} h_{n-2i-2}' ) ( e_{1, 2i}) 
 \quad =\quad (  y^{4i-2} - y^2 ) e_{1, 0} \\
d_n(e_{1,n}) &=&  (- g_{n}'  +  h_1^{n-1} h_{1}' ) ( e_{1, n}) 
 \quad =\quad  (- y^{2(n-1)} + 1 ) e_{1, 0}.
\end{eqnarray*}

For the coefficient of $e_{2,0}$, we compute 
\begin{eqnarray*}
( g_1' g_1^{n-1} ) ( e_{2, 0} ) &=& 1 e_{2,0} \\
( g_1' g_1^{n-1} ) ( e_{2,1}) &=& 1 e_{2,0}\\
(g_{2i+1}'  g_1^{n-2i - 1 } ) ( e_{2, 2i} ) 
&=& (y) ( -y) e_{2,0}\quad = \quad - y^2 e_{2,0}\\
(g_{2i+1}'  g_1^{n-2i - 1 } ) ( e_{2, 2i+1} ) 
& = &  (y) ( y^{-1}  ) e_{2,0}\quad = \quad 1 e_{2,0}\\
(h_1^{2i-2} h_{n-2i+2}' ) ( e_{2, 2i-1 } ) 
&=&  ( -y  )y \cdot y^{2(n-2i)}  e_{2,0} \quad = \quad   - y^{2n-4i+2}e_{2,0}  \\
(h_1^{2i-2} h_{n-2i+2}' ) ( e_{2, 2i} ) 
&=&  (y^{-1})y \cdot y^{2(n-2i)}  e_{2,0} \quad = \quad    y^{2n-4i }  e_{2,0} \\
(h_1^{n-1} h_1' ) ( e_{2, 0}  ) &=& 1 e_{2,0} \\
(h_1^{n-1} h_1' ) (  e_{2,n} ) ) &=& 1 e_{2,0} .
\end{eqnarray*}

From these we compute
\begin{eqnarray*}
d_n(e_{2,0}) &=& ( -  g_1' g_1^{n-1}+ h_1^{n-1} h_1'  ) ( e_{2, 0} ) \quad =\quad (-1+1) e_{2,0} \quad =\quad 0 \\
d_n(e_{2,1}) &=& ( - g_1' g_1^{n-1} + h_3' h_1^{n-3} ) 
 ( e_{2, 1} ) \quad =\quad (-1+y^{2n-2} )e_{2,0} \\
d_n(e_{2,2i+1}) &=& ( - g_{2i+1}'  g_1^{n-2i -1 } +h_1^{2i} h_{n-2i-1}'  ) (e_{2,2i+1}) 
 \quad =\quad (  y^2   - y^{2n-4i-2}  ) e_{2,0} \\
d_n(e_{2,2i}) &=&  (- g_{2i+1}'  g_1^{n-2i -1 } +  h_1^{2i-2} h_{n-2i+2}' ) ( e_{2, 2i}) 
 \quad =\quad  ( -1 +   y^{2n-4i} ) e_{2,0}  \\
d_n(e_{2,n}) &=&  (- g_{n}'  +  h_1^{n-1} h_{1}' ) ( e_{2, n}) 
 \quad =\quad  ( - 1 + 1 )e_{2,0} \quad =\quad 0   .
\end{eqnarray*}

For even $n$ and for $e_{2,0}$, we have
\begin{eqnarray*}
(g_{2i}'  g_1^{n-2i} ) ( e_{2, 2i-1} ) &=& (-y) (y^{-1}  ) e_{2,0}
  \quad = \quad   -1    e_{2,0} \\
(g_{2i}'  g_1^{n-2i} ) ( e_{2,2i})  &=& (-y) (-y  ) e_{2,0}
  \quad = \quad  y^2 e_{2,0} \\
( h_1^{2i} h_{n-2i}' ) ( e_{1, 2i+1} )
&=& y ( y^{-1}) y^{2(n-2i)} e_{2,0}. \quad  = \quad   - y^{2(n-2i+1)}  e_{2,0}  \\
( h_1^{2i} h_{n-2i}' ) (  e_{1, 2i+2}) 
&=& y ( -y ) y^{2(n-2i+2)} e_{2,0}. \quad  = \quad    y^{2(n-2i)}  e_{2,0}  .
\end{eqnarray*}

From these we compute
\begin{eqnarray*}
d_n(e_{2,2i-1}) &=& ( -  g_{2i}' g_1^{n-2i}+ h_1^{2i-2} h_{n-2i+2}'  ) ( e_{1, 2i-1} ) \quad =\quad (y^2 -  y^{2(n-2i+3)} ) e_{2,0}\\
d_n(e_{2,2i}) &=& ( - g_{2i}' g_1^{n-2i} - h_{2i-2}'  h_{n-2i+2}' ) ( e_{1, 2i} ) \quad =\quad  ( - 1 + y^{2(n-2i+2)} ) e_{2,0}. \\
\end{eqnarray*} 
This completes the proof.

\end{document}